\numberwithin{equation}{section}
\newtheorem{Theorem}{Theorem}[section]
\newtheorem{Lemma}[Theorem]{Lemma}
\newtheorem{prop}[Theorem]{Proposition}
 { \theoremstyle{definition}
\newtheorem{defn}[Theorem]{Definition}

\newtheorem{ex}[Theorem]{Example}
\newtheorem{exer}[Theorem]{Exercise}
\newtheorem{rmk}[Theorem]{Remark} }
\DeclareMathOperator{\rep}{Rep}
\DeclareMathOperator{\tr}{Tr}
\DeclareMathOperator{\spec}{Spec}
\DeclareMathOperator{\quot}{Quot}
\DeclareMathOperator{\im}{Im}
\DeclareMathOperator{\rk}{rk}
\DeclareMathOperator{\proj}{Proj}
\DeclareMathOperator{\Stab}{Stab}
\DeclareMathOperator{\diag}{diag}
\DeclareMathOperator{\End}{End}
\DeclareMathOperator{\Ext}{Ext}
\DeclareMathOperator{\Aut}{Aut}
\DeclareMathOperator{\Aff}{Aff}
\DeclareMathOperator{\ai}{a.i.}
\DeclareMathOperator{\codim}{codim}
\DeclareMathOperator{\nil}{nil}
\DeclareMathOperator{\Lie}{Lie}
\DeclareMathOperator{\Fr}{Fr}
\DeclareMathOperator{\cl}{cl}
\newcommand{\et}{\mathrm{\acute{e}t}}
\newcommand{\Mods}{\mathcal{M}_{d}^{\theta\text{\rm -s}}}
\newcommand{\ra}{\rightarrow}
\newcommand{\la}{\leftarrow}
\def\cA{\mathcal A}\def\cC{\mathcal C}
\def\cE{\mathcal E}\def\cF{\mathcal F}\def\cG{\mathcal G}\def\cH{\mathcal H}
\def\cI{\mathcal I}\def\cL{\mathcal L}
\def\cM{\mathcal M}\def\cO{\mathcal O}
\def\cR{\mathcal R}\def\cT{\mathcal T}
\def\cY{\mathcal Y}\def\cZ{\mathcal Z}
\def\AA{\mathbb A}\def\CC{\mathbb C}
\def\FF{\mathbb F}\def\GG{\mathbb G}\def\HH{\mathbb H}
\def\NN{\mathbb N}\def\PP{\mathbb P}
\def\QQ{\mathbb Q}\def\RR{\mathbb R}
\def\ZZ{\mathbb Z}
\def\fg{\mathfrak g}
\def\fk{\mathfrak k}\def\fl{\mathfrak l}
\def\fu{\mathfrak u}
\def\fX{\mathfrak X}
\newcommand{\Id}{\mathrm{Id}}
\newcommand{\op}{\mathrm{op}}
\newcommand{\Mat}{\mathrm{Mat}}
\newcommand{\Hom}{\mathrm{Hom}}
\newcommand{\Br}{\mathrm{Br}}
\def\GL{\mathbf{GL}}
\def\Gl{\mathrm{GL}}
\def\SUl{\mathrm{SU}}
\def\U{\mathbf{U}}
\def\Ul{\mathrm{U}}
\def\Sl{\mathrm{SL}}
\def\PGL{\mathbf{PGL}}
\def\GSp{\mathrm{GSp}}
\renewcommand{\cF}{\mathcal{F}}
\newcommand{\N}{\mathbb{N}}
\newcommand{\A}{\mathbb{A}}
\newcommand{\Z}{\mathbb{Z}}
\newcommand{\Mod}{\mathcal{M}_{d}^{\theta\text{\rm -ss}}(Q)}
\newcommand{\Modgs}{\mathcal{M}_{d}^{\theta\text{\rm -gs}}(Q)}
\newcommand{\Rep}{\rep_{d}(Q)}
\newcommand{\G}{\mathbf{G}_{d}}
\newcommand{\pphi}{\varphi}
\newcommand{\chith}{\chi_\theta}
\newcommand{\RepQbar}{\rep_{d}\big(\overline{Q}\big)}
\newcommand{\LieG}{\fg_{d}}
\newcommand{\LieGL}{\fg\fl}
\newcommand{\crep}{{\cR}{\rm ep}}
\newcommand{\ov}[1]{\overline{#1}}
\begin{document}

\allowdisplaybreaks

\newcommand{\arXivNumber}{1809.05738}

\renewcommand{\thefootnote}{}

\renewcommand{\PaperNumber}{127}

\FirstPageHeading

\ShortArticleName{Parallels between Moduli of Quiver Representations and Vector Bundles over Curves}

\ArticleName{Parallels between Moduli of Quiver Representations\\ and Vector Bundles over Curves\footnote{This paper is a~contribution to the Special Issue on Geometry and Physics of Hitchin Systems. The full collection is available at \href{https://www.emis.de/journals/SIGMA/hitchin-systems.html}{https://www.emis.de/journals/SIGMA/hitchin-systems.html}}}

\Author{Victoria HOSKINS}

\AuthorNameForHeading{V.~Hoskins}

\Address{Freie Universit\"at Berlin, Arnimallee 3, Raum 011, 14195 Berlin, Germany}
\Email{\href{mailto:hoskins@math.fu-berlin.de}{hoskins@math.fu-berlin.de}}
\URLaddress{\url{http://userpage.fu-berlin.de/hoskins/}}

\ArticleDates{Received September 25, 2018, in final form November 18, 2018; Published online December 04, 2018}

\Abstract{This is a review article exploring similarities between moduli of quiver representations and moduli of vector bundles over a smooth projective curve. After describing the basic properties of these moduli problems and constructions of their moduli spaces via geometric invariant theory and symplectic reduction, we introduce their hyperk\"{a}hler analogues: moduli spaces of representations of a doubled quiver satisfying certain relations imposed by a moment map and moduli spaces of Higgs bundles. Finally, we survey a surprising link between the counts of absolutely indecomposable objects over finite fields and the Betti cohomology of these (complex) hyperk\"{a}hler moduli spaces due to work of Crawley-Boevey and Van den Bergh and Hausel, Letellier and Rodriguez-Villegas in the quiver setting, and work of Schiffmann in the bundle setting.}

\Keywords{algebraic moduli problems; geometric invariant theory; representation theory of quivers; vector bundles and Higgs bundles on curves}

\Classification{14D20; 14L24; 16G20; 14H60}

\renewcommand{\thefootnote}{\arabic{footnote}}
\setcounter{footnote}{0}

{\small
\setcounter{tocdepth}{2}
\tableofcontents}

\section{Introduction}

The goal of this survey article is to describe several parallels between moduli of bundles and quiver representations. The main topics are divided into 3 parts:
\begin{itemize}\itemsep=0pt
\item Properties and constructions of moduli spaces (Sections~\ref{sec moduli quiver} and~\ref{sec moduli bundle}).
\item Associated hyperk\"{a}hler moduli spaces (Section~\ref{sec hyperkahler}).
\item Cohomology of hyperk\"{a}hler moduli spaces and counting indecomposable objects (Section~\ref{sec counting indecomp Betti}).
\end{itemize}
Within the text there are several exercises for the reader, as well as some interesting open problems.

We will focus on some of the most fundamental and striking similarities between these modu\-li problems; however, it is not possible to properly survey several important results concerning these moduli problems with the care they deserve. We will largely omit the study of the associated moduli stacks and techniques for studying the cohomology of moduli spaces via Harder--Narasimhan recursions on the stack \cite{atiyahbott,HN,reinekeHN}. Moreover, we will not discuss Hall algebras associated to these moduli problems in any depth, or the relationship with Donaldson--Thomas theory; for a comprehensive introduction to Hall algebras see \cite{schiffmann_lecturesHall}.

Moduli of quiver representations generalise many natural problems in linear algebra (for example, the classification of similar matrices via Jordan normal form). Despite their seemingly simple nature, quiver moduli spaces are ubiquitous in algebraic geometry (in fact, every projective variety arises as a quiver grassmannian \cite{reineke_QGrass}). Moreover, the study of such moduli spaces can shed light on related moduli problems and questions in representation theory.

The moduli problem most closely related to that of quiver representations is moduli of vector bundles (or coherent sheaves) on a smooth projective curve. Both moduli problems have associated abelian categories of homological dimension~1 and have associated moduli stacks which are smooth. In order to construct moduli spaces, one must restrict to a class of stable (or semistable) objects, then one obtains smooth moduli spaces of stable objects. These mo\-duli spaces can be constructed as algebraic quotients using geometric invariant theory \cite{mumford} (for bundles, the construction was given by Mumford, Newstead and Seshadri~\cite{mumford,newstead_bundles,seshadri}, and for quivers, this construction was given by King~\cite{king}), or, when over the complex numbers, via symplectic reduction. In fact, quiver moduli spaces have a finite dimensional symplectic construction, whereas moduli of vector bundles have an infinite dimensional gauge-theoretic symplectic construction~\cite{atiyahbott}. For quivers, the algebraic and symplectic quotients are homeomorphic via the Kempf--Ness theorem~\cite{kempf_ness,king}. The Kobayshi--Hitchin correspondence \cite{donaldsonNS,ns,UY} gives the corresponding relationship for the gauge theoretic constructions of moduli spaces of vector bundles. Before proceeding, let us mention one important difference between moduli spaces of bundles and quiver representations: although moduli spaces of semistable vector bundles on a curve provide compactifications of moduli spaces of stable vector bundles, moduli spaces of semistable quiver representations are only projective over an associated affine quiver variety.

We then turn to the study of associated (non-compact) hyperk\"{a}hler moduli spaces. The symplectic constructions of moduli spaces $\cM$ of quiver representations and vector bundles both arise by considering a smooth symplectic action of a Lie group on a complex vector space (in the case of vector bundles, the group and vector space both have infinite dimension). We can upgrade this to a hyperk\"{a}hler setting by taking the cotangent lift of this action and then perform a hyperk\"{a}hler reduction to construct a hyperk\"{a}hler analogue $\cH$ of $\cM$ such that \smash{$T^*\cM \subset \cH$}. The hyperk\"{a}hler reductions we obtain are moduli spaces of representations of a doubled quiver satisfying certain relations imposed by a moment map (closely related to Nakajima quiver varieties~\cite{nakajima}) and moduli spaces of Higgs bundles \cite{Hitchin, simpson_HiggsGm}. We also describe constructions of submanifolds (known as branes~\cite{Kapustin_Witten}) in these hyperk\"{a}hler moduli spaces with particularly rich holomorphic and symplectic geometry \cite{BS1,BS2,BGP,BGPH,fjm,HS_quiver_autos}.

Finally we survey several surprising results relating the counts of absolutely indecomposable objects of these moduli problems over finite fields and the Betti cohomology of their associated hyperk\"{a}hler moduli spaces. For quivers, this is due to work of Crawley-Boevey and Van den Bergh \cite{CBVdB} and Hausel, Letellier and Rodriguez-Villegas \cite{HLRV}, and was motivated by Kac's work in representation theory~\cite{Kac1}. This work inspired Schiffmann \cite{schiffmann} to formulate and prove an analogous statement for bundles, which lead to formulae for the Betti numbers of moduli spaces of Higgs bundles in the coprime setting.

The structure of this article is as follows: Sections~\ref{sec moduli quiver} and~\ref{sec moduli bundle} describe the basic properties and constructions of moduli spaces of quiver representations and vector bundles respectively. In Section~\ref{sec hyperkahler}, we introduce the associated hyperk\"{a}hler moduli spaces and survey some constructions of interesting submanifolds known as branes. In Section~\ref{sec counting indecomp Betti}, we provide the proof of Crawley-Boevey and Van den Bergh relating the counts of absolutely indecomposable quiver representations with the Betti numbers of the associated hyperk\"{a}hler moduli spaces, and then sketch how Schiffmann extends this to bundles.

\section{Moduli spaces of quiver representations}\label{sec moduli quiver}

\subsection{Quiver representations over a field}

A quiver $Q =(V,A,h,t)$ is a finite connected directed graph consisting of finite sets of vertices~$V$ and arrows~$A$ with head and tail maps $h,t\colon A \ra V$ giving the directions of the arrows. Throughout this section, we fix a field~$k$.

\begin{defn}\label{def_rep} A $k$-representation of $Q$ is a tuple $W:=((W_v)_{v\in V}, (\varphi_a)_{a\in A})$ where:
\begin{itemize}\itemsep=0pt
\item $W_v$ is a finite-dimensional $k$-vector space for all $v\in V$;
\item $\varphi_a\colon W_{t(a)} \ra W_{h(a)}$ is a $k$-linear map for all $a\in A$.
\end{itemize}
The dimension vector of $W$ is the tuple $\dim W=(\dim W_v)_{v\in V}$.
A morphism between two $k$-representations $W:=((W_v)_{v\in V}, (\varphi_a)_{a\in A})$ and $W':=((W'_v)_{v\in V}, (\varphi'_a)_{a\in A})$ is a tuple of linear maps $(f_v\colon W_v \ra W_v')_{v \in V}$ such that for all $a \in A$ the following diagram commutes
 \begin{gather*}
\xymatrix{ W_{t(a)} \ar[d]_{f_{t(a)}} \ar[r]^{\varphi_a} & W_{h(a)}\ar[d]^{f_{h(a)}}\\
W'_{t(a)} \ar[r]^{\varphi'_a} & W'_{h(a)}.}\end{gather*}
\end{defn}

\noindent The category $\crep(Q,k)$ of $k$-representations of $Q$ is a $k$-linear abelian category. For two $k$-representations $W$ and $W'$ of $Q$, the set of morphisms between them is a $k$-vector space denoted $\Hom_Q(W,W')$ and similarly one can consider the spaces of extensions between such representations.

\begin{ex}[the Jordan quiver]Let $Q$ be the one loop quiver. Then a $k$-representation of~$Q$ is a vector space $W$ with an endomorphism $\phi\colon W \ra W$. Two representations $(W,\pphi)$ and $(W',\pphi')$ of $Q$ are isomorphic if $\dim W = \dim W'$ and there is an isomorphism $f\colon W \ra W'$ such that $f \circ \pphi = \pphi' \circ f$. In particular, for any representation $(W,\pphi)$ of $Q$ of dimension $n$ we can choose a basis of $W$ to obtain an isomorphic representation $(k^{n}, M)$, where $M \in \Mat_{n \times n}(k)$. A different choice of basis would replace $M$ with a conjugate matrix $S M S^{-1}$. Thus the isomorphism classes of $n$-dimensional $k$-representations of $Q$ are in bijection with conjugacy classes of $(n \times n)$-matrices. For an algebraically closed field $k$, we can classify the latter using Jordan normal forms.
\end{ex}

\begin{exer}\label{exer hom and exts}For $k$-representations $W:=((W_v)_{v\in V}, (\pphi_a)_{a\in A})$ and $W':=((W'_v)_{v\in V}, (\pphi'_a)_{a\in A})$ of a quiver $Q$, show that a tuple $\phi=(\phi_a)_{a \in A} \in \prod \limits_{a \in A} \Hom(W_{t(a)}, W'_{h(a)}) $ determines a representation of $Q$
\begin{gather*} e(W,W',\phi):=\left( (W'_v \oplus W_v)_{v \in V}, \left(\begin{matrix} \pphi'_a & \phi_a \\ 0 & \pphi_a \end{matrix} \right)_{a \in A} \right), \end{gather*}
which fits into a short exact sequence of quiver representations
\begin{gather*} 0 \ra W' \ra e(W,W',\phi) \ra W \ra 0. \end{gather*}
Show that this defines a map $\beta\colon \prod \limits_{a \in A} \Hom(W_{t(a)}, W'_{h(a)}) \ra \Ext^1_Q(W,W')$ which fits in an exact sequence
 \begin{gather*}
0 \longrightarrow \Hom_Q(W,W') \longrightarrow \prod \limits_{v \in V} \Hom(W_v,W_v') \overset{\alpha}{\longrightarrow} \prod \limits_{a \in A} \Hom(W_{t(a)}, W'_{h(a)})\\
\hphantom{0}{} \overset{\beta}{\longrightarrow} \Ext^1_Q(W,W') \longrightarrow 0,
\end{gather*}
where $\alpha$ is defined by $\alpha( (f_v)_{v \in V}) := f_{h(a)} \circ \pphi_a - \pphi'_a \circ f_{t(a)}$. In particular, deduce that
\begin{gather*} \dim \Hom_Q(W,W') - \dim \Ext^1_Q(W,W') = \sum_{v \in V} \dim W_v \dim W'_v - \sum_{a \in A} \dim W_{t(a)} \dim W'_{h(a)}. \end{gather*}
\end{exer}

Following this observation, we define a bilinear form on the free abelian group on the set of vertices $V$.

\begin{defn}The Euler form associated to $Q$ is a bilinear form on $\ZZ^V$ given by
\begin{gather*} \langle d,d' \rangle_Q := \sum_{v \in V} d_v d'_v - \sum_{a \in A} d_{t(a)}d'_{h(a)}, \end{gather*}
where $d = (d_v)_{v \in V}$ and $d'= (d'_v)_{v \in V}$. The symmetrised Euler form is defined by
\begin{gather*} (d,d')_Q:= \langle d,d' \rangle_Q + \langle d',d \rangle_Q,\end{gather*}
and the associated Tits quadratic form is defined by $q_Q(d):=\langle d,d \rangle_Q$.
\end{defn}

By Exercise \ref{exer hom and exts}, the Euler form relates the dimensions of the Hom and Ext groups:
\begin{gather*} \langle \dim W, \dim W' \rangle_Q = \dim \Hom_Q(W,W') - \dim \Ext^1_Q(W,W').\end{gather*}
In fact, we can view this as the Euler characteristic of $\Hom_Q^\bullet(W,W')$ as all higher Ext groups vanish for quiver representations by Exercise \ref{exer quiver hom dim 1} below.

\begin{rmk}\samepage\quad
\begin{enumerate}\itemsep=0pt
\item[1)] The Euler form depends on the orientation of $Q$. The Euler form is symmetric if and only if $Q$ is symmetric (that is, for any two vertices $v$ and $w$, we have $| a\colon v \ra w | = | a\colon w \ra v |$).
\item[2)] The quadratic form $q_Q$ on $\ZZ^V$ associated to the Euler form only depends on the underlying graph of the quiver $Q$. In fact, properties of this quadratic form can be related to the properties of the underlying graph (for example, if $Q$ is a Dynkin quiver, then $q_Q$ is positive definite \cite[Section~4]{CB_notes}.)
\end{enumerate}
\end{rmk}

\begin{exer}[the category of quiver representations has homological dimension~1]\label{exer quiver hom dim 1}
For $U \in \crep(Q,k)$, we can apply $\Hom_Q(-,U)$ to any short exact sequence $0 \ra W' \ra W \ra W'' \ra 0$ in $\crep(Q,k)$ to obtain a long exact sequence
\begin{gather*}
 0 \ra \Hom_Q(W'', U) \ra \Hom_Q(W,U) \ra \Hom_Q(W',U) \ra \Ext^1_Q(W'',U) \ra \\
 \hphantom{0}{} \ra \Ext^1_Q(W,U) \ra \Ext^1_Q(W',U) \ra \Ext^2_Q(W'',U) \ra \cdots.
\end{gather*}
Using the description of $\Ext^1_Q$ in Exercise~\ref{exer hom and exts}, prove that $\Ext^1_Q(W,U) \ra \Ext^1_Q(W',U)$ is surjective, from which it follows that $\Ext^2_Q(W'',U) = 0$ for all $W''$ and $U$ and thus $\crep(Q,k)$ has homological dimension~1.
\end{exer}

\begin{rmk}The path algebra $k(Q)$ of $Q$ over $k$ is the $k$-vector space spanned by all paths in~$Q$ (including a trivial path $e_v$ at each vertex $v$) with multiplication given by concatenation of paths. In general, this is a non-commutative algebra, which is generated by the paths of length~$0$ (the vertices $V$) and the paths of length $1$ (the arrows $A$). We note that the path algebra is a finite dimensional $k$-algebra if and only if $Q$ has no oriented cycles. Moreover, the category $\crep(Q,k)$ is equivalent to the category of left $k(Q)$-modules (see \cite[Proposition~1.2.2]{brion_notes}). One can also calculate Ext groups of quiver representations by taking projective resolutions of the associated $k(Q)$-module.
\end{rmk}

\begin{defn}A quiver representation $W$ is
\begin{enumerate}\itemsep=0pt
\item[1)] \emph{simple} if it has no proper non-zero subrepresentations,
\item[2)] \emph{indecomposable} if it cannot be written as a direct sum of proper subrepresentations.
\end{enumerate}
Clearly every simple representation is indecomposable.
\end{defn}

\begin{exer}[Schur's lemma and simple quiver representations] For a simple $k$-representa\-tion~$W$ of~$Q$, prove that $\End_Q(W) $ is a division algebra using Schur's lemma. Hence, if $k$ is algebraically closed, deduce that $\End_Q(W) \cong k$ and $\Aut_Q(W) \cong k^{\times}$.
\end{exer}

\begin{ex}For any quiver $Q$, there are simple representations $S(v)$ indexed by the vertices of $V$, where $S(v)_w$ is zero for all $w \neq v$ and $S(v)_v =k$ and all the linear maps are zero. In fact, if $Q$ is a quiver without oriented cycles, then these are the only simple representations (see \cite[Proposition~1.3.1]{brion_notes}).
\end{ex}

\begin{exer}Find a quiver $Q$ such that
\begin{itemize}\itemsep=0pt
\item there is a simple representation not of the form $S(v)$,
\item there is a indecomposable representation which is not simple.
\end{itemize}
\end{exer}

\begin{rmk} For a field extension $k \subset k'$ we have a natural functor
\begin{gather*} - \otimes_k k' \colon \ \crep(Q,k) \ra \crep(Q,k')\end{gather*}
given by extension of scalars.
\end{rmk}

\begin{exer}Show that extension of scalars does not in general preserve simple (respectively indecomposable) quiver representations. For example, consider $k = \RR \subset k' = \CC$ with a 2-dimensional representation of the Jordan quiver.
\end{exer}

In fact, the category $\crep(Q,k)$ of $k$-representations of $Q$ is a Krull--Schmidt category, which means that the endomorphism ring of every idempotent representation is local (see Lemma~\ref{lemma end indecomp}) and every representation is isomorphic to a finite direct sum of indecomposable representations (and up to permutation, the indecomposable representations in such a direct sum are uniquely determined up to isomorphism); for details, see \cite[Theorem~1.3.4]{brion_notes}.

\subsection{GIT construction of moduli spaces}

In this section, we describe King's construction \cite{king} of moduli spaces of representations of a~quiver $Q=(V,A,h,t)$ over a field $k$. We fix a dimension vector $d=(d_v)_{v\in V}\in \N^V$. Then every $k$-representation of $Q$ of fixed dimension vector $d$ is isomorphic to a point of the following affine $k$-space
\begin{gather*} \Rep := \prod_{a\in A} \Mat_{d_{h(a)}\times d_{t(a)}}.\end{gather*}
The reductive $k$-group $\GL_d:=\prod\limits_{v\in V} \Gl_{d_v}$ acts algebraically on $\Rep$ by conjugation: for $g = (g_v)_{v \in V} \in \GL_d$ and $M = (M_a)_{a \in A} \in \Rep$, we have
 \begin{gather}\label{action_of_G_on_Rep}
g\cdot M := \big(g_{h(a)} M_a g_{t(a)}^{-1}\big)_{a\in A}
\end{gather}
and the orbits for this action are in bijection with the set of isomorphism classes of $d$-dimensional $k$-representations of $Q$ by Exercise~\ref{exer orbits of action} below. There is a subgroup $\Delta:= \{ (t I_{d_v})_{v\in V} \colon t\in \GG_m\} \subset \GL_d$ acting trivially on $\Rep$ and therefore a quotient of the action of $\GL_d$ is equivalent to a quotient of the action of $\G:= \GL_d/\Delta$. We have that
\begin{gather}\label{eq euler dim}
 \langle d ,d \rangle_Q = \dim \GL_d - \dim \Rep.
\end{gather}

\begin{exer}\label{exer orbits of action}
Show the orbit and stabiliser of $M \in \Rep(k)$ have the following descriptions:
\begin{gather*} \GL_d(k) \cdot M = \{ M' \in \Rep(k)\colon M' \cong M \} \end{gather*}
and
\begin{gather*} \Stab_{\GL_d(k)}(M) \cong \Aut_Q(M). \end{gather*}
Moreover, deduce from \eqref{eq euler dim} and Exercise \ref{exer hom and exts} that
\begin{gather*} \dim \Ext^1_Q(M,M) = \codim (\GL_d(k) \cdot M).\end{gather*}
\end{exer}

One would like to construct a moduli space for quiver representations as a quotient of this action using geometric invariant theory (GIT) \cite{mumford}. Since $\Rep$ is an affine variety and $\GL_d$ is a reductive group, the ring of invariant functions
\begin{gather*} \cO(\Rep)^{\GL_d}=\big\{ f \colon \Rep \ra \AA^1\colon g \cdot f = f \ \forall \, g \in \GL_d \big\} \end{gather*}
is a finitely generated $k$-algebra and the inclusion $\cO(\Rep)^{\GL_d} \hookrightarrow \cO(\Rep)$ induces a $\GL_d$-invariant morphism $\pi\colon \Rep \ra \Rep/\!/ \GL_d:= \spec \cO(\Rep)^{\GL_d}$ of affine varieties, which is the affine GIT quotient. The double quotient notation indicates that this is not an orbit space in general, as $\pi$ identifies orbits whose closures meet.

\begin{exer}Let $k$ be an algebraically closed field. For the Jordan quiver with dimension vector $n$, we have that $\Rep = \Mat_{n \times n} \cong \AA^{n^2} $ with $\GL_n$ acting by conjugation. Show that
\begin{gather*} \cO(\Rep)^{\GL_n} = k[\sigma_1, \dots , \sigma_n], \end{gather*}
where $\sigma_i \in \cO(\Rep)$ are the coefficients of the characteristic polynomial (viewed as functions on $\Rep$ which are invariant under conjugation). In particular, deduce that $\pi\colon \Rep \ra \Rep/\!/ \GL_n \cong \AA^n$ identifies orbits when $n \geq 2$ (for example, use the classification of orbits given by Jordan normal form). In fact, this is a special case of a result of Le Bruyn and Procesi described below, as the coefficients of the characteristic polynomial of a matrix can be computed by taking traces of powers of the matrix.
\end{exer}

The affine GIT quotient $\pi\colon \Rep \ra \Rep/\!/ \GL_d$ may identify all orbits; for example, if there are no non-constant invariant functions, which is the case if $Q$ has no oriented cycles by the following theorem.

\begin{Theorem}[Le Bruyn and Procesi \cite{lBP}]The ring of invariants $\cO(\Rep)^{\GL_d}$ is generated by taking traces along oriented cycles in $Q$.
\end{Theorem}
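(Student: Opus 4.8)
The plan is to reduce the statement to the classical \emph{first fundamental theorem} (FFT) for invariants of several $N\times N$ matrices under simultaneous conjugation by $\Gl_N$ --- due to Procesi, Sibirskii and Razmyslov in characteristic zero, and to Donkin in general --- which asserts that $\cO\big(\End(k^N)^{m}\big)^{\Gl_N}$ is generated by the traces $\tr(X_{i_1}\cdots X_{i_\ell})$ of words in the $m$ matrix variables $X_1,\dots,X_m$. To set this up, put $W_v:=k^{d_v}$, $W:=\bigoplus_{v\in V}W_v$ and $N:=\dim W=\sum_v d_v$, and regard $\GL_d=\prod_v\Gl_{d_v}$ as the block-diagonal subgroup of $\Gl_N:=\Gl(W)$; it is exactly the centraliser in $\Gl_N$ of the tuple $e=(e_v)_{v\in V}$ of orthogonal idempotents, where $e_v\in\End(W)$ is the projection onto $W_v$. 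Embed $\Rep\hookrightarrow\End(W)^{|A|}$ by viewing each $M_a\colon W_{t(a)}\to W_{h(a)}$ as an element of $\End(W)$ supported in the $\big(h(a),t(a)\big)$-block, so that $e_{h(a)}M_ae_{t(a)}=M_a$ while $e_wM_ae_u=0$ for $(w,u)\neq\big(h(a),t(a)\big)$, and the conjugation action \eqref{action_of_G_on_Rep} of $\GL_d$ is the restriction of simultaneous conjugation by $\Gl_N$.

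The first key step is to identify $\cO(\Rep)^{\GL_d}$ with a ring of $\Gl_N$-invariants. Let $Z$ be the closed $\Gl_N$-stable subvariety of $\End(W)^{|A|}\times\End(W)^{|V|}$ consisting of those pairs $\big((M'_a)_a,(e'_v)_v\big)$ for which $(e'_v)_v$ is a tuple of orthogonal idempotents of $\End(W)$ summing to $\id_W$ with $\rk e'_v\le d_v$ for all $v$ (equivalently, $(e'_v)_v$ lies in the orbit $\Gl_N\cdot e$, which is therefore closed) and $M'_a\in e'_{h(a)}\End(W)e'_{t(a)}$ for every $a$. The morphism $\Gl_N\times\Rep\to Z$, $(g,M)\mapsto g\cdot\big((M_a)_a,(e_v)_v\big)$, is surjective with fibres the $\GL_d$-orbits for the twisted action $h\cdot(g,M)=(gh^{-1},h\cdot M)$, so $Z\cong\Gl_N\times_{\GL_d}\Rep$ and hence $\cO(Z)^{\Gl_N}\cong\cO(\Rep)^{\GL_d}$ (restrict an invariant function to the fibre over $e$). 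Since $Z$ is closed in the ambient affine space and $\Gl_N$ is (geometrically) reductive, the restriction $\cO\big(\End(W)^{|A|+|V|}\big)^{\Gl_N}\onto\cO(Z)^{\Gl_N}$ is surjective. Combining these, every $\GL_d$-invariant regular function on $\Rep$ is the specialisation at $X_a=M_a$, $Y_v=e_v$ of a $\Gl_N$-invariant polynomial in the $|A|+|V|$ matrix variables $X_a,Y_v$.

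By the FFT, such invariants are spanned by traces of words in the $X_a$ and $Y_v$, so it remains to evaluate $\tr\big(\text{a word in the }M_a\text{ and }e_v\big)$. Because the $e_v$ are orthogonal idempotents summing to $\id_W$ and each $M_a$ lives in the $\big(h(a),t(a)\big)$-block, any product of such matrices either vanishes or, after deleting the idempotent factors (each of which acts as the identity on the relevant block or else annihilates the product), equals $M_{a_1}\cdots M_{a_\ell}$ with $t(a_i)=h(a_{i+1})$; its trace then vanishes unless moreover $h(a_1)=t(a_\ell)$, i.e.\ unless $a_1,\dots,a_\ell$ traverse an oriented cycle in $Q$, in which case we recover $\tr(M_{a_1}\cdots M_{a_\ell})$ (words involving only the $e_v$ contribute the constants $\tr(e_v)=d_v$). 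Hence $\cO(\Rep)^{\GL_d}$ is generated by traces along oriented cycles.

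The main obstacle is the FFT itself: passing from ``every invariant is a polynomial in the matrix entries'' to ``every invariant is a polynomial in traces of words'' is the substantive input, and everything else is standard GIT bookkeeping --- the identification $Z\cong\Gl_N\times_{\GL_d}\Rep$ with $\cO(Z)^{\Gl_N}=\cO(\Rep)^{\GL_d}$, closedness of $Z$ and of the idempotent orbit, and surjectivity of invariants onto a closed $\Gl_N$-subvariety. One should also be careful about conventions (the reading direction of words versus the composition of arrows, cyclicity of the trace, and allowing repeated arrows so that ``oriented cycle'' really means a closed walk) and about the characteristic: over a field that is not of characteristic zero one must read ``trace'' as including the remaining coefficients of characteristic polynomials of cycles, as in Donkin's form of the FFT (or simply restrict to characteristic zero, as in the original reference).
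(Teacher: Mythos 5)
The paper does not prove this theorem: it is quoted as a black box from Le Bruyn and Procesi \cite{lBP}, so there is no in-text argument to compare yours against. On its own terms your reduction is sound and is in fact the standard route: embed $\Rep$ block-wise into $\End(W)^{|A|}$ with $W=\bigoplus_{v}k^{d_v}$, adjoin the orthogonal idempotents $e_v$ as extra matrix variables, identify $\cO(\Rep)^{\GL_d}$ with $\cO(Z)^{\Gl_N}$ for the closed $\Gl_N$-stable locus $Z\cong \Gl_N\times_{\GL_d}\Rep$, lift invariants from $Z$ to the ambient space of $|A|+|V|$ matrices, and apply the first fundamental theorem for simultaneous conjugation; your bookkeeping with the idempotents correctly reduces traces of words to traces along oriented cycles (closed walks) plus constants $\tr(e_v)=d_v$. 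Two points deserve more care than the sketch gives them. First, the nontrivial direction of $\cO(Z)^{\Gl_N}\cong\cO(\Rep)^{\GL_d}$ is not restriction but extension: to see that a $\GL_d$-invariant regular function on $\Rep$ spreads out to a regular $\Gl_N$-invariant on $Z$ you need $Z$ to genuinely be the associated bundle, i.e.\ that $\Gl_N\to\Gl_N\cdot e$ is a Zariski-locally trivial principal $\GL_d$-bundle (true, e.g.\ because $\GL_d$ is a product of general linear groups, hence a special group); this should be said rather than folded into ``bookkeeping''. Second, your appeal to $\Gl_N$ being ``(geometrically) reductive'' for the surjectivity of $\cO\big(\End(W)^{|A|+|V|}\big)^{\Gl_N}\onto\cO(Z)^{\Gl_N}$ only gives what you claim in characteristic zero, where $\Gl_N$ is linearly reductive; for a merely geometrically reductive group one only gets that some $p$-th power of an invariant lifts. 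Since in positive characteristic the statement itself must anyway be reformulated with coefficients of characteristic polynomials of cycles (Donkin), as you note, the cleanest reading of your argument is: a complete proof in characteristic zero, which is the setting of \cite{lBP} and of the statement as quoted, together with a correct indication of what changes in general.
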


Instead King constructs a GIT quotient of the $\GL_d$-action on an open subset of $\Rep$ by linearising the action using a stability parameter $\theta=(\theta_v)_{v\in V}\in \Z^V$. The stability parameter $\theta$ determines a character $\chi_\theta\colon \GL_d \ra \GG_m$
\begin{gather}\label{the_character}
\chi_{\theta}( (g_v)_{v \in V} ):=\prod_{v\in V} (\det g_v)^{\theta_v},
\end{gather}
which descends to a character of $\G$ if and only if $\chi_\theta(\Delta)=1$ (that is, $\theta \cdot d := \sum\limits_{v \in V} \theta_v d_v = 0$).

Let $\cL_\theta$ denote the $\GL_d$-linearisation on the trivial line bundle $\Rep\times\A^1$, where $\GL_d$ acts on $\AA^1$ via multiplication by $\chi_\theta$. Then we can use invariant sections of positive powers of~$\cL_\theta$ to construct a GIT semistable set and a GIT quotient. As in \cite{king}, the invariant sections of positive tensor powers $\cL_\theta^n$ of this linearisation are $\chi_\theta^n$-semi-invariant functions; that is, $f\colon \Rep\ra \AA^1$ satisfying $f(g\cdot X) = \chi_\theta(g)^n\, f(X)$, for all $g\in\GL_d$ and all $X\in\Rep$. We let $\cO(\Rep)^{\GL_d,\chi_\theta^n}$ denote the subset of $\chi_\theta^n$-semi-invariant functions; then
\begin{gather*} H^0(\Rep, \cL_\theta^n)^{\GL_d}= \cO(\Rep)^{\GL_d,\chi_\theta^n}. \end{gather*}
Since $\Delta$ acts trivially on $\Rep$, invariant sections of $\cL_\theta^n$ for $n > 0$ only exist if $\chi_\theta(\Delta) = 1$ (i.e., $\theta \cdot d = 0$).

\begin{defn}For the $\GL_d$-linearisation on $\Rep$ given by $\chi_\theta$, we say a point $X \in \Rep$ is GIT semistable if there exists $n >0$ and an $\GL_d$-invariant section $f$ of $\cL_\theta^n$ with $f(X) \neq 0$. We let $\Rep^{\chi_\theta\text{-ss}}$ denote the subset of semistable points.
\end{defn}

The semistable set is an open subset of $\Rep$ and is non-empty only if $\theta \cdot d = 0$. Henceforth, we shall assume that $\theta \cdot d = 0$ in order to have a non-empty semistable set.

Mumford's linearised version of GIT gives us a GIT quotient
\begin{gather*} \Rep^{\chi_\theta\text{-ss}} \ra \Rep /\!/_{\chi_\theta} \GL_d := \proj \left(\bigoplus_{n \geq 0} \cO(\Rep)^{\GL_d,\chi_\theta^n} \right).\end{gather*}

\begin{rmk}The $0$th graded piece $\cO(\Rep)^{\GL_d,\chi_\theta^0} = \cO(\Rep)^{\GL_d}$ is the ring of invariant functions, and we have a projective (and thus proper) morphism
\begin{gather*} p \colon \ \Rep /\!/_{\chi_\theta} \GL_d \ra \Rep /\!/ \GL_d = \spec \cO(\Rep)^{\GL_d} \end{gather*}
to an affine variety. If $Q$ is a quiver without oriented cycles, then $\cO(\Rep)^{\GL_d} = k$ and $\Rep /\!/_{\chi_\theta} \GL_d$ is a projective variety.
\end{rmk}

The $\GL_d$-invariant sections of positive powers of $\cL_\theta$ are also used to determine a GIT notion of stability with respect to $\chi_\theta$ (see, \cite[Definition 2.1]{king} for $k = \ov{k}$, where we note that the notion of stability is modified to account for the presence of the global stabiliser $\Delta$). This determines an open subset $\Rep^{\chi_\theta\text{-s}}$ of $\chi_\theta$-stable points and the GIT quotient restricts to a~quotient $\pi|_{\Rep^{\chi_\theta\text{-s}}}\colon \Rep^{\chi_\theta\text{-s}} \ra \Rep^{\chi_\theta\text{-s}}/\GL_d$ which is a geometric quotient (which in particular, is an orbit space) of the GIT stable set.

Using the Hilbert--Mumford criterion to relate GIT semistability of geometric points with stability for 1-parameter subgroups $\lambda\colon G \ra \GL_d$, King proves that the GIT notion of (semi)stabi\-li\-ty can be translated to a notion of (semi)stabi\-li\-ty for $d$-dimensional representations of~$Q$. For points over a non-algebraically closed field, GIT stability is related to a notion of geometric stability for representations as described below.

\begin{defn}[semistability]\label{defn theta stab} Let $\theta \cdot d = 0$. We say a $d$-dimensional $k$-representation $W$ of~$Q$ is:
\begin{enumerate}\itemsep=0pt
\item[1)] $\theta$-semistable if $\theta \cdot \dim W' \geq 0$ for all $k$-subrepresentations $0 \neq W'\subsetneq W$.
\item[2)] $\theta$-stable if $\theta \cdot \dim W' > 0$ for all $k$-subrepresentations $0 \neq W'\subsetneq W$.
\item[3)] $\theta$-polystable if it is isomorphic to a direct sum of $\theta$-stable representations of equal slope.
\item[4)] $\theta$-geometrically stable if $W \otimes_k k'$ is $\theta$-stable for all field extensions $k'/k$.
\end{enumerate}
There are natural notions of Jordan--H\"{o}lder filtrations, and we say two $\theta$-semistable $k$-represen\-ta\-tions of $Q$ are $S$-equivalent if their associated graded objects for their Jordan--H\"{o}lder filtrations are isomorphic.
\end{defn}

\begin{exer}[rephrasing of stability as a slope-type condition] For $\theta \in \ZZ^V$, we define the slope of a $k$-representation $W$ of $Q$ by
\begin{gather*}\mu_\theta(W) := \frac{\sum\limits_{v\in V} \theta_v\dim_k W_v}{\sum\limits_{v\in V} \dim_k W_v}.\end{gather*}
Let $\theta'_v := \theta_v \sum\limits_{w \in V} d_w - \sum\limits_{w\in V} \theta_w d_w$ for all $v\in V$; then show that $\sum\limits_{v\in V} \theta'_v d_v =0$ and that slope semistability with respect to $\theta$ and $\theta'$ coincide (where slope semistability means all subrepresentations have slope less than or equal to the slope of the representation). Furthermore, show for $d$-dimensional representations of $Q$ that $(-\theta')$-semistability (as in Definition~\ref{defn theta stab}) is equivalent to slope semistability with respect to $\theta'$.
\end{exer}

The slope version of (semi)stability enables one to easily define Harder--Narasimhan (HN) filtrations for quiver representations. In~\cite{reinekeHN}, Reineke used the HN stratification on $\Rep$ (together with the HN system in an associated Hall algebra) to give formulae for the Poincar\'{e} polynomials of moduli spaces of semistable representations of quivers without oriented cycles, when semistability is taken with respect to a generic stability parameter in the following sense.

\begin{defn}A stability parameter $\theta \in \ZZ^V$ is called \emph{generic} with respect to $d$ if $\theta \cdot d = 0$ and for all non-zero $d' \in \NN^V$ with $d' < d$ (i.e., $d' \neq d$ and $d_v' \leq d_v$ for all $v \in V$), we have $\theta \cdot d' \neq 0$.
\end{defn}

\begin{rmk}For a generic stability parameter $\theta$ with respect to $d$, every $\theta$-semistable $k$-representation of $Q$ is also $\theta$-stable.
\end{rmk}

Using the Hilbert--Mumford criterion, which gives a criterion for GIT semistability using 1-parameter subgroups of $\GL_d$, King shows the open subsets $\Rep^{\theta\text{-ss}}$ and $\Rep^{\theta\text{-gs}}$ of $\theta$-semistable and $\theta$-geometrically stable $k$-representations in $\Rep$ coincide with the GIT semistable and stable locus respectively:
\begin{gather*} \Rep^{\theta\text{-ss}} = \Rep^{\chi_\theta\text{-ss}} \qquad \text{and} \qquad \Rep^{\theta\text{-gs}} = \Rep^{\chi_\theta\text{-s}}.\end{gather*}
Hence, the GIT quotient $\Mod:=\Rep /\!/_{\chi_\theta} \G$ is a $k$-variety that co-represents the modu\-li functor of $\theta$-semistable $k$-representations of $Q$ of dimension $d$ (up to $S$-equivalence). Moreover, $\Modgs := \Rep^{\chi_\theta\text{-s}}/\G$ is an open $k$-subvariety of $\Mod$ that co-repre\-sents the moduli functor of $\theta$-geometrically stable $k$-representations of~$Q$ of dimension~$d$ (up to isomorphism). In general, $\Modgs$ does not admit a universal family in general; however, there is a Brauer class controlling the obstruction to admitting a universal family and an associated universal twisted family over~$\Modgs$ (cf.\ \cite[Proposition~4.18]{HS_galois}). We will refer to both these spaces as moduli spaces.

\begin{exer}Consider the $n$-arrow Kronecker quiver with $V = \{v_1,v_2\}$ and $A = \{ a_i\colon v_1 \ra v_2\}_{i=1}^n$. For the dimension vector $d=(1,1)$, we have $\Rep \cong \AA^n$ with the action of $\GL_d \cong \GG_m \times \GG_m$ given by $(t_1,t_2) \mapsto \diag\big(t_2t_1^{-1}, \dots , t_2t_1^{-1}\big)$. If we naturally identify $\cO(\Rep) \cong k[x_1, \dots ,x_n]$ with each variable corresponding to an arrow, then $\cO(\Rep)^{\GL_d} = k$. Moreover, show that for $\theta_+ = (1,-1)$ and $\theta_- = (-1,1)$ we have the following semistable loci and GIT quotients: $\Rep^{\theta_+\text{-ss}} = \varnothing$ and
\begin{gather*} \Rep^{\theta_-\text{-ss}} = \AA^n - \{ 0 \} \mapsto \Rep /\!/_{\! \theta_-} \GL_d = \proj (k[x_1, \dots , x_n]) \cong \PP^{n-1}. \end{gather*}
\end{exer}

\begin{exer}[stable representations are simple]Prove that a $\theta$-stable $k$-representation of~$Q$ is simple. If $k$ is algebraically closed, deduce that the automorphism group of a $\theta$-stable representation of $Q$ is isomorphic to the multiplicative group $k^\times$.
\end{exer}

In fact, for an arbitrary field $k$, the stabiliser group of an $\theta$-geometrically stable $k$-represen\-ta\-tion is the subgroup $\Delta \subset \GL_d$ (for example, see \cite[Corollary~2.14]{HS_galois}). The action of $\G:= \GL_d/\Delta$ on $\Rep^{\theta\text{-gs}}$ is free and the geometric quotient $\Rep^{\theta\text{-gs}} \ra \Modgs$ is a~principal $\G$-bundle; thus $\Modgs$ is smooth. Provided $\Rep^{\theta\text{-gs}} \neq \varnothing$, we have
\begin{gather*} \dim \Modgs = \dim \Rep - \dim \G = \dim \Rep - \dim \GL_d + 1 = 1 - \langle d, d \rangle_Q. \end{gather*}

\begin{rmk}In fact, it is a theorem of Gabriel that the Tits form $q_Q(d):= \langle d, d \rangle_Q$ is positive definite if and only if the underlying graph of $Q$ is a simply-laced Dynkin diagram; this is in turn equivalent to each of the following statements:
\begin{enumerate}\itemsep=0pt
\item[i)] $q_Q(d) \geq 1$,
\item[ii)] there is an open $\G$-orbit in $\Rep$,
\item[iii)] there are only finitely many $\G$-orbit in $\Rep$.
\end{enumerate}
By Exercise~\ref{exer orbits of action} we see (ii) implies (i) and the equivalence of (ii) and (iii) holds as the closure of any orbit is a union of finitely many orbits and, as $\Rep$ is irreducible, any open orbit is dense.
\end{rmk}

For an algebraically closed field $k$, the closed points of $\Mod$ are in bijection with $S$-equivalence classes of $\G(k)$-orbits of $\chith$-semistable rational points, where two $k$-representa\-tions~$M_1$ and~$ M_2$ are $S$-equivalent if their orbit closures intersect in $\Rep^{\chi_\theta\text{-ss}}$. By \cite[Proposition~3.2(ii)]{king}, this is the same as the $S$-equivalence of ${M_1}$ and ${M_2}$ as $\theta$-semistable representations of~$Q$. Moreover, for~$k$ algebraically closed, we have $\Mods(k) = \Rep^{\theta\text{-s}}(k)/\G(k)$.

\begin{rmk}For a non-algebraically closed field, the rational points of these moduli spaces do not in general correspond to rational orbits. In \cite{HS_galois}, for a perfect field $k$, we show using Galois cohomology and descent that there is an injection
\begin{gather*} \Rep^{\theta\text{-gs}}(k)/\G(k) \hookrightarrow \Modgs(k),\end{gather*}
and that the remaining points in $\Modgs(k)$ that do not come from isomorphism classes of $k$-representations can be described as representations of $Q$ over central division algebras over $k$ (or equivalently as twisted quiver representations). More precisely, as the moduli stack
\begin{gather*} \big[\Rep^{\theta\text{-gs}}/\GL_d\big] \ra \Modgs \end{gather*}
is a $\GG_m$-gerbe over $\Modgs$, we can pull this gerbe back along $\spec k \ra \Modgs$ to obtain a type map
\begin{gather*} \cT \colon \ \Modgs(k) \ra \Br(k):=H^2_{\et}(k,\GG_m). \end{gather*}
We prove that a necessary condition for a division algebra $D \in \Br(k)$ to lie in the image of $\cT$ is that $\deg(D):=\sqrt{\dim_k(D)} $ divides the dimension vector $d$; that is, $d = d_D \deg(D)$ for some dimension vector $d_D$. Moreover, we interpret the fibre $\cT^{-1}(D)$ using Galois descent as the space of isomorphism classes of $\theta$-geometrically stable $d_D$-dimensional representations of $Q$ over the division algebra $D$. This gives a complete description of $\Modgs(k)$ in terms of isomorphism classes of representations over division algebras with centre $k$. Let us mention two special cases of this result:
\begin{itemize}\itemsep=0pt
\item For $k = \RR$, we have $\Br(\RR) = \{ \RR, \HH\}$ and so the points in $\Modgs(\RR)$ are rational or quaternionic quiver representations (and the latter only occur if $2 | d$).
\item For a finite field $k = \FF_q$, the Brauer group is trivial; thus $\Modgs(\FF_q)$ is precisely the set of isomorphism classes of $\theta$-geometrically stable $d$-dimensional $\FF_q$-representations of $Q$.
\end{itemize}
\end{rmk}

\subsection{Symplectic construction of complex quiver varieties}\label{sec sympl constr}

Over the complex numbers, the Kempf--Ness theorem \cite{kempf_ness} relates certain geometric invariant theory quotients by reductive groups to smooth symplectic reductions by maximal compact subgroups \cite{kempf_ness}. In the case of quiver moduli, we have a complex reductive group $\GL_d$ acting on a complex affine space $\Rep$, and via a Kempf--Ness theorem, the GIT quotient of $\GL_d$ acting on $\Rep$ with respect to $\chi_\theta\colon \GL_d \ra \GG_m$ is homeomorphic to the smooth symplectic reduction of the action of a maximal compact subgroup of $\GL_d$ on $\Rep$ as described by King \cite{king}. In this section, we briefly explain this alternative symplectic construction.

The complex reductive group $\GL_d$ is the complexification of the maximal compact subgroup
\begin{gather*} \U_d = \prod_{v \in V} U(d_v). \end{gather*}
We consider the Hermitian form $H\colon \Rep \times \Rep \ra \CC$ defined by
\begin{gather*} H(X,Y) = \sum_{a \in A} \tr \big(X_a Y_a^\dagger\big), \end{gather*}
where $Y^\dagger$ is the complex conjugate transpose of $Y$. Let $\omega_{\RR}(-,-) := - \operatorname{Im}H(-,-)$; then $\omega_{\RR}$ is a~(smooth) symplectic form on the manifold $\Rep$. In fact, the complex structure on $\Rep$ is compatible with this form, and so $\Rep$ is naturally a (flat) K\"{a}hler manifold. Moreover, the form $\omega_{\RR}$ is preserved by the action of~$\U_d$, as $H$ is $\U_d$-invariant.

\begin{defn}A \emph{moment map} for a symplectic action of a compact Lie group $K$ on a smooth symplectic manifold $(M,\omega)$ is a smooth map $\mu_{\RR}\colon M \ra \mathfrak{k}^*:= \Lie (K)^*$ which is equivariant with respect to the given $K$-action on $M$ and the coadjoint action of $K$ on $\mathfrak{k}^*$ and lifts the infinitesimal action in the sense that
\begin{gather*} d_m \mu_{\RR}(\xi) \cdot B = \omega_{m}(B_m,\xi) \end{gather*}
for all $m \in M$ and $\xi \in T_mM$ and $B \in \mathfrak{k}$, where $B_m \in T_mM$ denotes the infinitesimal action of~$B$ on~$m$.
\end{defn}

We are interested in the situation where $K$ acts linearly on a complex vector space $M = \CC^n$. A $K$-invariant Hermitian inner product $H$ on $M$ gives $M$ the structure of a K\"{a}hler manifold, as we can write $H = g - {\rm i} \omega$, where $g$ is a metric and $\omega$ a K\"{a}hler form. In this situation, by the following exercise, a moment map always exists but it is not necessarily unique as we can always shift it by a central value of~$\fk^*$.

\begin{exer}Let $K$ act linearly on $M = \CC^n$ and pick a $K$-invariant Hermitian inner product $H= g - {\rm i} \omega$ on $M$. Prove that a moment map for the $K$-action on $(M,\omega)$ is given by $\mu_{\RR}\colon M \ra \mathfrak{k}^*$ with
\begin{gather*} \mu_{\RR}(m) \cdot B := \frac{ {\rm i}}{2} H(B_m , m).\end{gather*}
Furthermore, show that we can shift this moment map by any central value $\chi \in \fk^*$.
\end{exer}

In particular, there is a moment map $\mu_{\RR}\colon \Rep \ra \mathfrak{u}_d^*$ for the action of $\U_d$ on $\Rep$ given by
\begin{gather*} \mu_{\RR}(X) \cdot B = \frac{ {\rm i}}{2} H(B_X , X) = \sum_{a \in A} \tr \big(( B_{h(a)}X_a - X_a B_{t(a)})X_a^\dagger\big). \end{gather*}
By identifying $\mathfrak{u}_d \cong \mathfrak{u}_d^*$ using the Killing form, we obtain a map $\mu_{\RR}^*\colon \Rep \ra \mathfrak{u}_d$ where
\begin{gather*} \mu_{\RR}^*(X) = \sum_{a \in A} [X_a, X_a^{\dagger}] = \left(\sum_{a\colon h(a) =v} X_a X_a^{\dagger} - \sum_{a\colon t(a) = v} X_a^{\dagger} X_a\right)_{v \in V}. \end{gather*}
Moreover, any tuple $\theta = (\theta_v)_{v \in V} \in \ZZ^V$ defines a character $\chi_\theta\colon \GL_d \ra \GG_m$ whose restriction to $\U_d$ has image in $\Ul(1)$. Hence, we can view the derivative $d\chi_\theta|_{\U_d} \colon \mathfrak{u}_d \ra \mathfrak{u}(1) \cong 2 \pi {\rm i} \RR$ as a~coadjoint fixed point of $\mathfrak{u}_d^*$ (often we also denote this by~$\theta$ or~$\chi_\theta$); this coadjoint fixed point can be used to shift the moment map.

Such shifting of the moment map by a central value merely corresponds to considering different fibres of the moment map; this choice can be used to produce different symplectic reductions as follows.

\begin{defn}For a symplectic $K$-action on $(M,\omega)$ with moment map $\mu_\RR\colon M \ra \mathfrak{k}^*$, we define the \emph{$($smooth$)$ symplectic reduction} of the $K$-action on $M$ at a coadjoint fixed point $\chi \in \mathfrak{k}^*$ to be the topological quotient
\begin{gather*} \mu_{\RR}^{-1}(\chi)/K.\end{gather*}
We note that the level set $ \mu_{\RR}^{-1}(\chi) \subset M$ is $K$-invariant by the equivariance of $\mu_{\RR}$.
\end{defn}

If $\chi$ is a regular value of $\mu_{\RR}$ (i.e., the differential of $\mu_\RR$ at every point in the preimage $\mu^{-1}_{\RR}(\chi)$ is surjective), then $\mu^{-1}_{\RR}(\chi) \subset M$ is a smooth submanifold. If $K$ is a compact Lie group acting freely on $\mu_{\RR}^{-1}(\chi)$, then the topological quotient $\mu_{\RR}^{-1}(\chi)/K$ is a smooth manifold by the slice theorem and moreover, it inherits a (smooth) symplectic form from the form $\omega$ on $M$ by the Marsden--Weinstein theorem~\cite{mw}. If in fact, $(M,\omega)$ is a K\"{a}hler manifold (i.e., there is a~compatible complex structure and Riemannian metric on~$M$), then the symplectic reduction is also K\"{a}hler, provided it is smooth. If $K$ acts on $\mu_{\RR}^{-1}(\chi)$ with finite stabilisers, then $\mu_{\RR}^{-1}(\chi)/K$ is a symplectic orbifold, and more generally if~$K$ acts with positive dimensional stabilisers, then $\mu_{\RR}^{-1}(\chi)/K$ can be given the structure of a stratified symplectic manifold.

Let $(M,\omega)$ be a K\"{a}hler manifold that is a smooth affine (or projective) complex variety with a Fubini--Study form. If there is a linear action of a complex reductive group $G$ on $M$ for which a maximal compact subgroup $K < G$ preserves $\omega$, then the Kempf--Ness theorem~\cite{kempf_ness} provides a~homeomorphism between the geometric invariant theory quotient of~$M$ by $G$ and the symplectic reduction of $M$ by $K$. We recall that the GIT quotient depends on a choice of linearisation of the action. In the case when $M$ is projective, this work extends to give a comparison between an algebraic GIT stratification and a symplectic Morse-theoretic stratification~\cite{kirwan,ness}. In the affine setting $M \subset \AA^n$, if $\chi\colon G \ra \GG_m$ is a character which is used to linearise this action, then we can restrict~$\chi$ to maximal compact subgroups and take derivatives to obtain $d\chi|_K\colon \mathfrak{k} \ra \mathfrak{u}(1) \cong 2 \pi {\rm i} \RR$; this defines an element of $\mathfrak{k}^*$, which by abuse of notation we also denote by~$\chi$. Then the Kempf--Ness theorem gives an inclusion $\mu^{-1}_{\RR}(\chi) \hookrightarrow M^{\chi\text{-ss}}$ which induces a homeomorphism
\begin{gather*} \mu^{-1}_{\RR}(\chi)/K \ra M/\!/_\chi G. \end{gather*}
More precisely, the Kempf--Ness theorem states that the $G$-orbit closure of a $\chi$-semistable orbit in $M$ meets the level set $\mu^{-1}_{\RR}(\chi)$ in a unique $K$-orbit and the inclusion $\mu^{-1}_{\RR}(\chi) \hookrightarrow M^{\chi\text{-ss}}$ induces the above homeomorphism; for details in this affine setting, see \cite{king} and \cite{hoskins_quivers}, which also relates the GIT instability stratification with the symplectic Morse-theoretic stratification.

Let us return to the action of $G = \GL_d$ on $M = \Rep$, then $\theta$ determines a charac\-ter~$\chi_\theta$ of $\GL_d$ and a coadjoint fixed point $\theta \in \mathfrak{u}_d^*$. The inclusion $\mu_{\RR}^{-1}(\chi_\theta) \subset \Rep^{\theta\text{-ss}}$ induces a~homeomorphism
\begin{gather}\label{eq KN quiver}
\mu_{\RR}^{-1}(\chi_\theta)/ \U_d \simeq\Rep /\!/_{\chi_\theta} \GL_d
\end{gather}
and if $\theta$ is generic with respect to $d$ (so that semistability and stability with respect to $\theta$ coincide for $d$-dimensional representations of $Q$), then this symplectic reduction is a smooth symplectic (in fact, K\"{a}hler) manifold.

\section{Moduli spaces of vector bundles over curves}\label{sec moduli bundle}

The moduli problem of classifying algebraic vector bundles over a smooth projective curve has many similarities with that of quiver representations, which we explain in this section.

\subsection{Vector bundles over a curve}

Let $X$ be a smooth projective curve over a field $k$. The genus of $X$ is $g(X):= h^0(X,\omega_X)$, where $\omega_X := \Omega^1_X$ is the \emph{canonical line bundle}.

We will often use the equivalence between the category of (algebraic) vector bundles on $X$ and the category of locally free sheaves on $X$. We recall that this equivalence is given by associating to a vector bundle $F \ra X$ the sheaf $\cF$ of sections of $F$. One should be careful when going between vector bundles and locally free sheaves, as this correspondence does not preserve subobjects in general.

Although the category of locally free sheaves is not abelian, the category $\cC{\rm oh}(X)$ of coherent sheaves of $\cO_X$-modules is abelian. The category $\cC{\rm oh}(X)$ has homological dimension 1, as $\Ext$-groups can be described as sheaf cohomology groups:
\begin{gather*} \Ext^i(\cE,\cF) = H^i(X, \cH{\rm om}(\cE,\cF)), \end{gather*}
which vanish for $i \geq 2$ as $\dim X = 1$. Moreover, the first cohomology groups can be described using Serre duality.

We can also define an Euler characteristic by
\begin{gather*} \chi(\cE):= \dim H^0(X,\cE) - \dim H^1(X,\cE) \end{gather*}
and for a pair $\cE$ and $\cF$ of locally free sheaves, we define an Euler form by
\begin{gather*} \langle \cE, \cF \rangle := \chi\big(\cE^\vee \otimes \cF\big)= \dim \Hom(\cE,\cF) - \dim \Ext^1(\cE,\cF).\end{gather*}
In fact, using the Riemann--Roch formula, this Euler characteristic is entirely described by the invariants of these sheaves
\begin{gather*} \langle \cE, \cF \rangle = \rk \cE \deg \cF - \rk \cF \deg \cE + \rk \cE \rk \cF (1-g) \end{gather*}
analogously to the case for quiver representations.

\subsection{Construction of moduli spaces of vector bundles}

In this section, we outline some different constructions of moduli spaces of (algebraic) vector bundles of rank $n$ and degree $d$ over $X$. We start with an algebraic approach using geometric invariant theory which generalises to the construction of moduli spaces of coherent sheaves over projective schemes. We then survey the gauge theoretic construction over the complex numbers as an infinite-dimensional symplectic reduction, which generalises to principal bundles and hyperk\"{a}hler analogues, such as Higgs bundles (cf.\ Section~\ref{sec Higgs}).

\subsubsection{Slope stability for vector bundles}

\begin{defn}The slope of a non-zero vector bundle $E$ over $X$ is the ratio
\begin{gather*} \mu(E):= \frac{\deg E}{\rk E}. \end{gather*}
 A vector bundle $E$ is slope stable (resp. semistable) if every proper non-zero vector subbundle $E' \subset E$ satisfies
 \begin{gather*} \mu(E') < \mu(E) \qquad (\text{resp.} \ \mu(E') \leq \mu(E) \text{ for semistability}). \end{gather*}
 A vector bundle $E$ is polystable if it is a direct sum of stable bundles of the same slope.
\end{defn}

\begin{rmk}Since the degree and rank are both additive on short exact sequences of vector bundles
 \begin{gather*} 0 \ra E \ra F \ra G \ra 0,\end{gather*}
the following statements hold:
 \begin{enumerate}\itemsep=0pt
 \item[1)] If two out of the three bundles have the same slope $\mu$, the third also has slope $\mu$.
 \item[2)] $\mu(E) < \mu(F)$ (resp. $\mu(E) > \mu(F)$) if and only if $\mu(F) < \mu(G)$ (resp. $\mu(F) > \mu(G)$).
 \end{enumerate}
\end{rmk}

\begin{exer} Let $L$ be a line bundle and $E$ a vector bundle over $X$; then show
 \begin{enumerate}\itemsep=0pt
 \item[i)] $L$ is stable,
 \item[ii)] if $E$ is stable (resp.\ semistable), then $E \otimes L$ is stable (resp.\ semistable).
 \end{enumerate}
\end{exer}

If we fix a rank $n$ and degree $d$ such that $n$ and $d$ are coprime, then the notion of semistability for vector bundles with invariants $(n,d)$ coincides with the notion of stability.

\begin{exer}[stable vector bundles are simple]\label{ss lemma}
Let $f\colon E \ra F$ be a non-zero homomorphism of vector bundles on $X$ over $k = \ov{k}$; then prove the following statements.
\begin{enumerate}\itemsep=0pt
 \item[i)] If $E$ and $F$ are semistable, $\mu(E) \leq \mu(F)$.
 \item[ii)] If $E$ and $F$ are stable of the same slope, then $f$ is an isomorphism.
 \item[iii)] Every stable vector bundle $E$ is simple: $\End(E) = k$.
\end{enumerate}
\end{exer}

\subsubsection{GIT construction}

The moduli problem of rank $n$ and degree $d$ vector bundles over $X$ is unbounded, in the sense that there is no finite type $k$-scheme parameterising all such vector bundles. We can overcome this problem by restricting to moduli of semistable vector bundles, which is bounded by work of Le Potier and Simpson~\cite{simpson}. The first construction of moduli spaces of semistable vector bundles over $X$ were given by Mumford \cite{mumford}, Seshadri \cite{seshadri} and Newstead \cite{newstead_bundles,newstead}. In these notes, we will essentially follow the construction due to Simpson \cite{simpson} which generalises the curve case to a higher-dimensional projective scheme. An excellent indepth treatment of the construction following Simpson can be found in the book of Huybrechts and Lehn~\cite{HL}. We will exploit the fact that we are over a curve to simplify some of the arguments; for example, the boundedness of semistable sheaves is significantly easier over a curve, and in fact if we assume that the degree is sufficiently large, we have the following boundedness result.

\begin{Lemma}\label{lemma ss bound} Let $\cF$ be a locally free sheaf over $X$ of rank $n$ and degree $d > n(2g-1)$. If the associated vector bundle $F$ is semistable, then the following statements hold:
 \begin{enumerate}\itemsep=0pt
 \item[$i)$] $H^1(X,\cF)= 0$;
 \item[$ii)$] $\cF$ is generated by its global sections.
 \end{enumerate}
\end{Lemma}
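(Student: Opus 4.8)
The plan is to deduce both statements from Serre duality together with the semistability hypothesis. For part $i)$, by Serre duality we have $H^1(X,\cF) \cong H^0(X, \cF^\vee \otimes \omega_X)^\vee = \Hom(\cF, \omega_X)^\vee$, so it suffices to show there are no non-zero homomorphisms $\cF \to \omega_X$. Suppose $f \colon \cF \to \omega_X$ is non-zero. The image $\cG := \im(f)$ is a non-zero subsheaf of the line bundle $\omega_X$, hence torsion-free of rank $1$, and as a quotient of $\cF$ it is a quotient sheaf; since $\cG \subset \omega_X$ we get $\deg \cG \leq \deg \omega_X = 2g-2$. On the other hand, semistability of $F$ implies that every non-zero quotient $\cF \twoheadrightarrow \cG$ satisfies $\mu(\cG) \geq \mu(\cF) = d/n > 2g-1$, so in particular $\deg \cG > 2g-1 > 2g-2$, a contradiction. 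Hence $\Hom(\cF,\omega_X) = 0$ and $H^1(X,\cF) = 0$.

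For part $ii)$, I would argue pointwise: $\cF$ is globally generated if and only if for every closed point $x \in X$ the evaluation map $H^0(X,\cF) \to \cF \otimes k(x)$ is surjective, equivalently (taking the long exact sequence of $0 \to \cF(-x) \to \cF \to \cF\otimes k(x) \to 0$) if and only if $H^0(X,\cF) \to H^0(X,\cF\otimes k(x))$ is surjective, which holds once $H^1(X,\cF(-x)) = 0$. Now $\cF(-x)$ is locally free of rank $n$ and degree $d - n > n(2g-1) - n = n(2g-2)$; wait — I need $d - n > n(2g-1)$, i.e. $d > n \cdot 2g$, which is slightly more than assumed. So instead I would use a twist by a point and note that $F(-x)$ is still semistable (tensoring by a line bundle preserves semistability, by the earlier exercise), and its degree is $d - n$. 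The cleanest fix is to observe that semistability of $F$ gives, for the sub-line-bundle-free part, what we need: since $F(-x)$ is semistable of slope $(d-n)/n = \mu(F) - 1 > 2g - 2$, the same argument as in part $i)$ applied to $F(-x)$ shows $H^1(X, \cF(-x)) \cong \Hom(\cF(-x),\omega_X)^\vee = 0$, because any non-zero map $\cF(-x) \to \omega_X$ has image of degree $\leq 2g-2$ while semistability forces quotients to have slope $\geq \mu(F(-x)) > 2g-2$. Hence the evaluation map is surjective at every point and $\cF$ is globally generated.

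The main subtlety — and the step to be careful about — is the bookkeeping with the strict versus non-strict inequalities: the hypothesis is $d > n(2g-1)$, which gives $\mu(F) > 2g-1$ and, after twisting down by a point, $\mu(F(-x)) > 2g-2$; one must check that this last inequality is still strict enough to beat $\deg \omega_X = 2g-2$ in the quotient-sheaf estimate. Since a quotient of $\cF(-x)$ mapping into $\omega_X$ is a rank-one torsion-free sheaf of degree at most $2g-2$, while semistability forces its slope to be at least $\mu(F(-x)) > 2g-2$, the contradiction is genuine and no boundary case survives. Everything else is standard: the reduction of global generation to vanishing of $H^1$ of a twist, and the use of Serre duality, are routine, and tensoring by a line bundle preserving semistability was recorded in an earlier exercise.
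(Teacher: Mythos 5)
Your proposal is correct and follows essentially the same route as the paper: Serre duality turns $H^1(X,\cF)$ into $\Hom(\cF,\omega_X)^\vee$ and semistability rules out a non-zero map (the paper bounds the saturated kernel as a subbundle, you bound the rank-one image as a quotient, which is the same estimate via the see-saw property), and part ii) is reduced to $H^1(X,\cF(-x))=0$ via the twist sequence at a point. Your extra care with the degree bookkeeping is exactly what the paper does implicitly: its proof of i) only needs $d>n(2g-2)$, which is why applying the argument to $\cF(-x)$ of degree $d-n>n(2g-2)$ suffices.
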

\begin{proof} For i), we argue by contradiction using Serre duality: if $H^1(X,\cF) \neq 0$, then dually there would be a non-zero homomorphism $f\colon \cF \ra \omega_X$. We let $E \subset F$ be the vector subbundle generically generated by the kernel of $f$, which is a vector subbundle of rank $n-1$ with
 \begin{gather*} \deg E \geq \deg \ker f \geq \deg \cF - \deg \omega_X = d -(2g -2). \end{gather*}
In this case, by semistability of $F$, we have
 \begin{gather*} \frac{d - (2g-2)}{n-1}\leq \mu(E) \leq \mu(F) = \frac{d}{n}; \end{gather*}
this gives $d \leq n(2g-2)$, which contradicts our assumption on the degree of $F$.

 For ii), we let $F_x$ denote the fibre of the vector bundle at a point $x \in X$. If we consider the fibre $F_x$ as a torsion sheaf over $X$, then we have a short exact sequence
\begin{gather*} 0 \ra \cF(-x):= \cF \otimes \cO_X(-x) \ra \cF \ra F_x = \cF \otimes k_x \ra 0, \end{gather*}
which gives rise to an associated long exact sequence in cohomology and it suffices to show that $H^1(X, \cF(-x)) =0$. To prove this vanishing, we apply part i) above to the sheaf $\cF(-x)= \cF \otimes \cO_X(-x) $ which is also semistable with $\deg(\cF(-x))=d -n> n(2g-2)$.
\end{proof}

Given a locally free sheaf $\cF$ of rank $n$ and degree $d$ that is generated by its global sections, we can consider the evaluation map
\begin{gather*} \text{ev}_\cF \colon \ H^0(X,\cF) \otimes \cO_X \ra \cF,\end{gather*}
which is, by assumption, surjective. If also $H^1(X,\cF) = 0$, then by the Riemann--Roch formula
\begin{gather*} \chi(\cF) = d + n(1-g) = \dim H^0(X,\cF) - \dim H^1(X,\cF) = \dim H^0(X,\cF);\end{gather*}
 that is, the dimension of the $0$th cohomology is fixed and equal to $N: = d + n(1-g)$. Therefore, we can choose an isomorphism $H^0(X,\cF) \cong k^{N}$ and combine this with the evaluation map for~$\cF$, to produce a surjection
\begin{gather*} q_\cF \colon \ \cO_X^{\oplus N} \twoheadrightarrow \cF\end{gather*}
from a fixed trivial vector bundle. Such surjective homomorphisms from a fixed coherent sheaf are parametrised by a Quot scheme, which is a natural generalisation of the Grassmannian varieties (for a thorough treatement of Quot schemes, see \cite{nitsure}).

Let $Q:=\quot_X^{n,d}\big(\cO_X^{\oplus N}\big)$ be the Quot scheme of rank $n$, degree $d$ quotient sheaves of the trivial rank $N$ vector bundle. Let $Q^{\mu\text{-(s)s}} \subset Q$ denote the open subscheme consisting of quotients $q\colon \cO_X^{\oplus N} \ra \cF$ such that $\cF$ is a slope (semi)stable locally free sheaf and $H^0(q)$ is an isomorphism.

For a semistable sheaf $\cF$, we note that different choices of isomorphism $H^0(X,\cF) \cong k^N$ give rise to different points in $Q^{\mu\text{-ss}}$. Any two choices of the above isomorphism are related by an element in the general linear group $\Gl_N$ and this gives rise to an action of $\Gl_N$ on the Quot scheme $Q$ such that the orbits in $Q^{\mu\text{-(s)s}}(k)$ are in bijective correspondence with the isomorphism classes of (semi)stable locally free sheaves on~$X$ with invariants~$(n,d)$. In fact, the diagonal $\GG_m < \Gl_N$ acts trivially, and so it suffices to take a quotient by the action of $\Sl_N$ to construct a moduli space.

We linearise this action to give an equivariant projective embedding in order to construct a~GIT quotient. There is a natural family of invertible sheaves on the Quot scheme arising from Grothendieck's embedding of the Quot scheme into the Grassmannians: for sufficiently large~$m$, we have a closed immersion
\begin{gather*} Q= \quot_X^{n,d}\big(\cO_X^{\oplus N}\big) \hookrightarrow \text{Gr}\big(H^0\big(\cO_X(m)^{\oplus N}\big), M\big) \hookrightarrow \PP\big(\wedge^M H^0\big(\cO_X(m)^{\oplus N}\big)^\vee\big),\end{gather*}
where $M= mr +d +r(1-g)$. We let $\cL_m$ denote the pull back of $\cO_{\PP}(1)$ to the Quot scheme via this closed immersion. There is a natural linear action of $\Sl_N$ on $H^0\big(\cO_X(m)^{\oplus N}\big)= k^N \otimes H^0(\cO_X(m))$, which induces a linear action of $\Sl_N$ on $\PP\big(\wedge^M H^0\big(\cO_X(m)^{\oplus N}\big)^\vee\big)$; hence, $\cL_m$ admits a linearisation of the $\Sl_N$-action.

This linearised action has a GIT quotient
\begin{gather*} Q^{\text{ss}} \ra Q/\!/_{\cL_m} \Sl_N, \end{gather*}
where $Q^{\text{ss}}$ denotes the GIT semistable locus and, as $Q$ is projective, this GIT quotient is also projective. Provided we take $d$ sufficiently large and $m$ sufficiently large, the notion of GIT semistability for this $\Sl_N$-action coincides with slope semistability; that is $Q^{\mu\text{-ss}} = Q^{\text{ss}}$~\cite{simpson}. Over an algebraically closed field, GIT stability corresponds to slope stability, and over an arbitrary field $k$, GIT stability corresponds to geometric stability (cf.~\cite{langer}). The above GIT quotient is a moduli space
\begin{gather*} \cM_C^{\text{ss}}(n,d):= Q/\!/_{\cL_m} \Sl_N\end{gather*}
for semistable rank $n$ degree $d$ vector bundles over $X$ (up to $S$-equivalence), and its restriction to the GIT stable locus is a moduli space for geometrically stable vector bundles (up to isomorphism). As we are over a curve, the open subscheme $Q^{\text{ss}} \subset Q$ is smooth; however, the GIT quotient $\cM_C^{\text{ss}}(n,d)$ of this smooth variety may be singular, as the action is not necessarily free.

For coprime rank and degree, semistability and stability coincide and, as stable vector bundles are simple, it follows that the GIT quotient is a $\PGL_N$-principal bundle (by Luna's \'{e}tale slice theorem). In this case, the projective moduli space $\cM=\cM_C^{\text{ss}}(n,d)$ is also smooth. Moreover, using the deformation theory of vector bundles, one can describe the Zariski tangent spaces to~$\cM$ by
\begin{gather*} T_{[\cE]}\cM \cong \text{Ext}^1(\cE,\cE).\end{gather*}
In particular, $\dim \cM = n^2(g-1) +1$. Over a higher-dimensional base, we have the same description of the tangent spaces at stable sheaves, except now the obstruction to smoothness of the moduli space (and Quot scheme) lies in a second Ext group, which could be non-zero; see \cite[Corollary~4.52]{HL}.

\subsubsection{Functorial construction}

\'{A}lvarez-C\'{o}nsul and King \cite{ack} provide a construction of moduli spaces of semistable sheaves by functorially embedding this moduli problem into a moduli problem for quiver representations. More precisely, Simpson's GIT construction \cite{simpson} of moduli spaces of sheaves on a polarised variety $(X,\cO_X(1))$ depends on choices of natural numbers $m \gg n\gg 0$ (first one takes $n$ sufficiently large, so all semistable sheaves are $n$-regular and can be parametrised by a Quot scheme, and then one takes $m$ sufficiently to embed this Quot scheme in a Grassmannian and give a linearisation of the action). In \cite{ack}, a functor
\begin{gather*} \Phi_{n,m}:= \Hom(\cO_X(-n) \oplus \cO_X(-m),{-}) \colon \ \cC{\rm oh}(X) \ra \crep({{\rm Kr}_{n,m}})\end{gather*}
from the category of coherent sheaves on $X$ to the category of representations of a Kronecker quiver ${\rm Kr}_{n,m}$ with two vertices $n$, $m$ and $\dim H^0(\cO_X(m-n))$ arrows from $n$ to $m$ is used for $m \gg n\gg 0$ to provide an embedding of the subcategory of semistable sheaves with Hilbert polynomial~$P$ into a subcategory of semistable quiver representations of fixed dimension (where both the semistability parameter and dimension vector depend on $n$, $m$ and $P$). This functorial approach is then used to construct the moduli space of semistable sheaves on $X$ with Hilbert polynomial $P$ using King's GIT construction of quiver moduli spaces.

\subsubsection{Gauge-theoretic construction}\label{sec gauge constr M}

Over $k = \CC$, Atiyah and Bott \cite{atiyahbott} use an alternative gauge theoretic construction of this moduli space as a symplectic (in fact, K\"{a}hler) reduction.

In this complex setting, the curve $X$ can be viewed as a compact Riemann surface. Rather than working in the algebraic category, we can switch to the holomorphic category, by using the GAGA-equivalence, which gives an equivalence between the category of algebraic bundles on $X$ (viewed as an algebraic curve) and the category of holomorphic vector bundles on $X$ (viewed as a complex manifold); for details, see \cite{serreGAGA}. A holomorphic vector bundle can be viewed as a complex vector bundle with a holomorphic structure, which one can equivalently view as a~Dolbeault operator, as the integrability condition holds trivially for dimension reasons. For a~fixed complex vector bundle $E \ra X$, we consider
\begin{gather*} \cC=\cC(E):=\{ \text{holomorphic structures on } E \}; \end{gather*}
this is an infinite-dimensional complex vector space which is modelled on $\Omega^{0,1}(X, \End(E))$. Furthermore, we can pull back holomorphic structures along bundle homomorphisms of $E$ and so this gives an action of
\begin{gather*}\cG_{\CC}:= \Aut(E)\end{gather*}
on $\cC$ such that the orbits are precisely the isomorphism classes of holomorphic structures on $E$. The central group $\CC^* < \cG$, which corresponds to scalar multiples of the identity map on $E$, acts trivially on $\cC$.

In order to construct a quotient of such an action, Atiyah and Bott relate the $\cG_{\CC}$-space $\cC$ to a space of unitary connections. We recall that the bundle of frames of $E$ is a principal $\Gl_n(\CC)$-bundle, where $n = \rk E$. Since $\Ul(n)$ is a maximal compact subgroup of $\Gl_n(\CC)$, any principal $\Gl_n(\CC)$-bundle admits a reduction to $\Ul(n)$, which we can equivalently think of as a Hermitian metric $h$ on $E$. We can thus fix a Hermitian metric $h$ on $E$.

\begin{defn}An affine connection on $E$ is a linear map $\nabla \colon \Omega^0(X,E) \ra \Omega^1(X,E)$ satisfying the Leibniz rule. We say $\nabla$ is $h$-unitary if $dh(s_1,s_2) = h(\nabla(s_1),s_2)+ h(s_1, \nabla(s_2))$ for all sec\-tions~$s_i$ of~$E$.
\end{defn}

Let $\cA= \cA(E,h)$ denote the space of $h$-unitary affine connections on $E$; this is an infinite-dimensional complex affine space which is modelled on $\Omega^1(X, \End(E,h))$, where $\End(E,h)$ denotes the bundle of $h$-skew Hermitian endomorphisms of $E$. We can also view $\cA$ as the space of connections on the principal $\Ul(n)$-bundle associated to $(E,h)$.

\begin{defn}Let $\cG := \Aut(E,h)$ denote the $h$-unitary automorphisms of $E$; then $\cG_{\CC}=\Aut(E)$ is the complexification of $\cG$. We call $\cG$ the \emph{unitary gauge group} and $\cG_{\CC}$ the \emph{complex gauge group}.
\end{defn}

The unitary gauge group $\cG$ acts on $\cA$ of unitary connections and we can relate this to the action of the complex gauge group $\cG_{\CC}$ on $\cC$ using the following isomorphism.

\begin{Lemma}[Atiyah--Bott isomorphism]There is an isomorphism $\cA(E,h) \ra \cC(E)$ given by $\nabla \mapsto \nabla^{(0,1)}$, which we view as a Dolbeault operator on $E$.
\end{Lemma}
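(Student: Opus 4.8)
The plan is to make the Atiyah--Bott isomorphism statement precise by unwinding the two descriptions of the spaces $\cA(E,h)$ and $\cC(E)$ and exhibiting the map $\nabla \mapsto \nabla^{(0,1)}$ as a bijection that is compatible with the affine structures. First I would recall that once a $C^\infty$ complex vector bundle $E \ra X$ is fixed, together with a Hermitian metric $h$, an $h$-unitary affine connection $\nabla$ decomposes according to the splitting $\Omega^1(X,E) = \Omega^{1,0}(X,E)\oplus\Omega^{0,1}(X,E)$ coming from the complex structure on the Riemann surface $X$, so that $\nabla = \nabla^{(1,0)} + \nabla^{(0,1)}$. The component $\nabla^{(0,1)} \colon \Omega^0(X,E)\ra\Omega^{0,1}(X,E)$ satisfies the Leibniz rule with respect to $\bar\partial$ on functions, hence is a Dolbeault operator, and because $X$ is a curve the integrability condition $(\nabla^{(0,1)})^2 = 0$ that would be required to define a holomorphic structure is automatic for type reasons (it lands in $\Omega^{0,2} = 0$). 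Thus the assignment $\nabla \mapsto \nabla^{(0,1)}$ does land in $\cC(E)$.

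Next I would establish injectivity and surjectivity simultaneously by using the metric $h$ to reconstruct $\nabla$ from $\nabla^{(0,1)}$. Given any Dolbeault operator $\bar\partial_E \in \cC(E)$, there is a unique $h$-unitary connection, the \emph{Chern connection}, whose $(0,1)$-part is $\bar\partial_E$; its $(1,0)$-part is determined by the compatibility equation $\partial h(s_1,s_2) = h(\nabla^{(1,0)} s_1, s_2) + h(s_1, \bar\partial_E s_2)$. This shows the map $\nabla\mapsto\nabla^{(0,1)}$ is a bijection, with inverse $\bar\partial_E \mapsto (\text{Chern connection of }(\bar\partial_E, h))$. Finally I would observe that both $\cA(E,h)$ and $\cC(E)$ are affine spaces modelled on $\Omega^1(X,\End(E,h))$ and $\Omega^{0,1}(X,\End(E))$ respectively, and that these underlying vector spaces are identified by the real-linear isomorphism sending an $h$-skew-Hermitian-valued $1$-form $\alpha$ to its $(0,1)$-part $\alpha^{(0,1)}$ (its $(1,0)$-part being recoverable as $-(\alpha^{(0,1)})^{*_h}$); under this identification the bijection above is affine-linear, completing the proof that it is an isomorphism of (infinite-dimensional) affine spaces.

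The routine parts here are the type-degree bookkeeping on the Riemann surface and checking the Leibniz rules. The one genuine point requiring care --- and the step I expect to be the main obstacle to write cleanly --- is the existence and uniqueness of the Chern connection attached to a Dolbeault operator and a Hermitian metric, i.e.\ verifying that the compatibility equation $\partial h(s_1,s_2) = h(\nabla^{(1,0)}s_1,s_2) + h(s_1,\bar\partial_E s_2)$ has a unique solution $\nabla^{(1,0)}$; this is a standard fact but it is where the metric $h$ genuinely enters, and it is the reason the isomorphism is only \emph{real}-linear (not $\CC$-linear) as a map of affine spaces. Since the statement as given in the excerpt merely asserts the existence of the isomorphism $\nabla\mapsto\nabla^{(0,1)}$, I would present the Chern connection construction as the key lemma and keep the rest as short verifications.
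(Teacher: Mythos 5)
Your proposal is correct and follows essentially the same route the paper sketches: the vanishing of the integrability condition over a curve, the Chern connection as the inverse, and the identification of the model vector spaces (the paper's local statement $\Omega^{1}(\fu(n)) \cong \Omega^{0,1}(\fg\fl_n)$ is exactly your real-linear isomorphism $\alpha \mapsto \alpha^{(0,1)}$). Your write-up simply fills in the details the paper leaves implicit, in particular the existence and uniqueness of the Chern connection from the compatibility equation, which you rightly identify as the one step requiring genuine care.
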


As we are working on a curve, there is no integrability condition and $\nabla^{(0,1)}$ defines a holomorphic structure on $E$. The inverse is given by taking the Chern connection $\nabla_{\ov{\partial}_E,h}$ associated to a~holomorphic structure $\ov{\partial}_E$ on $E$ and a Hermitian metric $h$. Locally the Atiyah--Bott isomorphism corresponds to the isomorphism \begin{gather*}\Omega^{1}(\fu(n)) \cong \Omega^{0,1}(\fg \fl_n).\end{gather*}

Although the space $\cA$ and the isomorphism $\cA \cong \cC$ both depend on the choice of Hermitian metric $h$, we can identify the space of Hermitian metrics on $E$ with $\Aut(E)/ \Aut(E,h) = \cG_\CC/\cG$. Thus any two Hermitian metrics on $E$ are related by a complex gauge transformation.

The space $\cA$ has the structure of a smooth symplectic manifold: if we identify $T_{\nabla} \cA \cong \Omega^1(X,\End(E,h))$, then $\omega_{\RR} \colon T_{\nabla} \cA \times T_{\nabla} \cA \ra \RR$ is defined by
\begin{gather*} \omega_{\RR}(\beta,\gamma) := \int_X \tr(\beta \wedge \gamma), \end{gather*}
where $\tr(\beta \wedge \gamma) \in \Omega^2(X)$. In this infinite-dimensional setting, $\omega_{\RR}$ being non-degenerate means that it induces an injection $T_{\nabla} \cA \ra T_{\nabla}^* \cA$. The inner product given by the trace also induces an isomorphism
\begin{gather*} \Lie \cG^* = \Omega^0(X, \End(E,h))^* \cong \Omega^2(X,\End(E,h)). \end{gather*}

We recall that the curvature of $\nabla \in \cA$ is the form $F_\nabla:= \nabla^2 \in \Omega^2(X, \End(E,h))$.

\begin{Lemma}The $\cG$-action on $\cA$ is symplectic with moment map $\mu \colon \cA \ra \Omega^2(X, \End(E,h))\cong \Lie \cG^*$ given by taking the curvature $($modulo a sign$)$: $\mu(\nabla) = - F_\nabla$.
\end{Lemma}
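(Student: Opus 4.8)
The plan is to verify the two defining conditions of a moment map directly from the explicit description of all the objects involved, exploiting the fact that $\cA$ is an affine space so that tangent spaces are canonically identified with the modelling vector space $\Omega^1(X,\End(E,h))$.

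\emph{Equivariance.} First I would check that $\mu(\nabla) = -F_\nabla$ is $\cG$-equivariant. A unitary gauge transformation $g \in \cG$ acts on a connection by $\nabla \mapsto g \cdot \nabla = g \nabla g^{-1}$, and the curvature transforms tensorially: $F_{g\cdot\nabla} = g F_\nabla g^{-1}$. On the other hand, the coadjoint action of $\cG$ on $\Lie\cG^* \cong \Omega^2(X,\End(E,h))$, under the trace pairing, is again conjugation $\xi \mapsto g\xi g^{-1}$ (since the trace form is $\Ad$-invariant: $\int_X \tr((g\xi g^{-1})(g\eta g^{-1})) = \int_X\tr(\xi\eta)$). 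Hence $\mu(g\cdot\nabla) = -g F_\nabla g^{-1} = g\cdot\mu(\nabla)$ as required, the overall sign being irrelevant to equivariance.

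\emph{Lifting the infinitesimal action.} The substantive point is the identity $d_\nabla\mu(\beta)\cdot B = \omega_\RR(B_\nabla,\beta)$ for $B \in \Lie\cG = \Omega^0(X,\End(E,h))$ and $\beta \in T_\nabla\cA \cong \Omega^1(X,\End(E,h))$. Here the infinitesimal action is $B_\nabla = d_\nabla B \in \Omega^1(X,\End(E,h))$, the covariant derivative of $B$. For the left-hand side, since $\nabla \mapsto F_\nabla = \nabla^2$ is (affine-)quadratic in $\nabla$, its derivative in the direction $\beta$ is the standard Bianchi-type formula $d_\nabla F_\nabla(\beta) = d_\nabla\beta$, so $d_\nabla\mu(\beta) = -d_\nabla\beta$, and pairing against $B$ via the trace gives $d_\nabla\mu(\beta)\cdot B = -\int_X \tr\bigl((d_\nabla\beta)\, B\bigr)$. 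I would then integrate by parts on the closed curve $X$: because $d_\nabla$ is a unitary (metric) connection, $d\,\tr(\beta\, B) = \tr\bigl((d_\nabla\beta)B\bigr) - \tr\bigl(\beta\,(d_\nabla B)\bigr)$ up to the appropriate sign from moving $d_\nabla$ past a $1$-form, and Stokes' theorem kills the total derivative. This yields $d_\nabla\mu(\beta)\cdot B = \int_X \tr\bigl(d_\nabla B \wedge \beta\bigr) = \omega_\RR(d_\nabla B, \beta) = \omega_\RR(B_\nabla,\beta)$, matching the convention in the definition of a moment map.

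\emph{Main obstacle.} The genuine subtleties are bookkeeping rather than conceptual: getting the signs consistent across the definition $\omega_\RR(\beta,\gamma) = \int_X\tr(\beta\wedge\gamma)$, the Koszul sign in $d_\nabla(\tr(\beta\, B))$ when $\beta$ is a $1$-form, the sign convention $\mu = -F_\nabla$, and the normalization of the trace pairing identifying $\Lie\cG^*$ with $\Omega^2(X,\End(E,h))$. One must also note that in this infinite-dimensional setting ``moment map'' is understood in the formal sense of Atiyah--Bott: $\omega_\RR$ is weakly non-degenerate, and the defining equation $d_\nabla\mu(\beta)\cdot B = \omega_\RR(B_\nabla,\beta)$ is verified as an identity of (continuous) linear functionals, with no appeal to the slice theorem. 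I expect the sign chase to be the only place requiring care; everything else follows from the affine structure of $\cA$, the tensoriality of curvature, and Stokes' theorem.
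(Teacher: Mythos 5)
Your verification is correct and is exactly the computation the paper has in mind: the paper states this Lemma without proof and explicitly leaves the equivariance and the infinitesimal lifting property as an exercise, and your argument (tensoriality of curvature for equivariance, linearisation $F_{\nabla+t\beta}=F_\nabla+t\,d_\nabla\beta+O(t^2)$, and Stokes' theorem to move $d_\nabla$ onto $B$) is the standard Atiyah--Bott verification. The only caveat is the one you already flag: the sign of $\mu$ and of the infinitesimal action $B_\nabla=\pm d_\nabla B$ depend on conventions (the paper itself says the sign ``appears due to our sign conventions''), so the sign chase must be done consistently with the paper's ordering $d_m\mu(\xi)\cdot B=\omega_m(B_m,\xi)$, which your computation does.
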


The sign here appears due to our sign conventions for the infinitesimal lifting property of the moment map. We leave the verification of this infinitesimal lifting property and the equivariance of $\mu$ as an exercise.

To construct a symplectic reduction of the $\cG$-action on $\cA$, we need to take the level set at a~coadjoint fixed point. Since the Lie algebra of the unitary group has centre $Z(\fu(n)) = {\rm i} \RR I_n$, we similarly have that all imaginary scalar multiplies of the identity map on $E$ are central in $\Lie \cG$. Fix a Riemannian metric on~$X$ whose associated volume form induces the given orientation on~$X$; then there is an associated Hodge star operator $\star \colon \Omega^k(X) \ra \Omega^{2-k}(X)$. Using this Hodge star operator, we can view the moment map taking values in $\Lie \cG$ by
\begin{gather*} \mu \colon \ \cA \ra \Lie \cG, \qquad \nabla \mapsto -\star F_\nabla. \end{gather*}

\begin{defn}A $h$-unitary connection $\nabla$ on $E$ is projectively flat if $\star F_\nabla \in \Omega^0(X, \End(E,h))$ is a constant element in the centre of $\fu(n)$; that is, an imaginary scalar multiple of $\Id_E$.
\end{defn}

In fact, the scalar appearing for such projectively flat connections is related to the slope $\mu(E)$.

\begin{exer}Using the fact that the degree of $E$ can be defined using the curvature $F_\nabla$ of any connection on $E$ via
\begin{gather*} \deg (E) := \int_X \frac{{\rm i}}{2\pi} \tr( F_\nabla), \end{gather*}
prove that if $\nabla$ is a projectively flat $h$-unitary connection (i.e., $\star F_\nabla= - {\rm i} \mu \mathrm{Id}_E$ for some constant $\mu \in \RR$), then the constant $\mu$ is equal to the slope of $E$ (provided we normalise our Riemmannian metric so the integral of its associated volume form over~$X$ is equal to~$2 \pi$).
\end{exer}

The symplectic reduction of the $\cG$-action on $\cA$ at the central value ${\rm i}\mu(E) \Id_E \in \Lie \cG$ is a~moduli space
\begin{gather*} \cM_{E,h}^{\text{proj.\ flat}}:=\mu^{-1}({\rm i}\mu(E) \Id_E) / \cG \end{gather*}
for unitary gauge equivalence classes of projectively flat $h$-unitary connections on $E$. In fact, as $\cA\cong \cC$ has a compatible complex structure, it is naturally an infinite-dimensional K\"{a}hler manifold and so the associated moduli space inherits a K\"{a}hler structure if $\cG/\Ul(1)$ acts freely on the level set $\mu^{-1}({\rm i}\mu(E) \Id_E)$, which is the case if $E$ has coprime rank and degree.

In order to relate this symplectic reduction with holomorphic structures, we need the following definition.

\begin{defn}A Hermitian--Einstein connection on a complex vector bundle $E$ is a projectively flat affine connection that is unitary for some Hermitian metric on $E$.
\end{defn}

The moduli space of semistable vector bundles is homeomorphic to the moduli space of representations $\pi_1(X) \ra \Ul(n)$ by the Narasimhan--Seshadri correspondence \cite{ns}. An alternative gauge theoretic interpretation of this result was provided by Donaldson \cite{donaldsonNS} and Uhlenbeck and Yau \cite{UY} by relating the modu\-li space of projectively flat $h$-unitary connections on $E$ to the mo\-duli space of holomorphic structures on $E$; this is called the Kobayashi--Hitchin correspondence.

\begin{Theorem}[Kobayashi--Hitchin correspondence \cite{donaldsonNS,ns,UY}] A holomorphic vector bundle~$\cE$ is slope polystable if and only if its underlying complex vector bundle admits a Hermitian--Einstein connection. Moreover, this connection is unique up to unitary gauge transformations.
\end{Theorem}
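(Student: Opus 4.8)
The plan is to prove the equivalence in the two directions and the uniqueness clause separately, reducing everything to the case of a \emph{stable} bundle.

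For the implication ``Hermitian--Einstein $\Rightarrow$ slope polystable'' I would argue by Chern--Weil theory. Suppose $E$ carries an $h$-unitary connection $\nabla$ with $\star F_\nabla = -\mathrm i\mu\,\Id_E$ and set $\cE := (E,\nabla^{(0,1)})$, so that $\mu = \mu(\cE)$ by the earlier exercise. For a nonzero holomorphic subbundle $\cE'\subset\cE$, I would give $\cE'$ the restricted metric, let $p$ be the $h$-orthogonal projection onto it, and use the second-fundamental-form identity $F_{\nabla'} = p\,F_\nabla|_{\cE'} - \beta^\dagger\wedge\beta$, where $\nabla'$ is the Chern connection of $(\cE',h|_{\cE'})$ and $\beta$ is the second fundamental form. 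Applying $\tfrac{\mathrm i}{2\pi}\int_X\tr(-)$ and the Hermitian--Einstein equation then yields $\mu(\cE')\le\mu(\cE)$, with equality exactly when $\beta=0$, i.e.\ when $\cE'$ is $\nabla$-parallel and hence a holomorphic orthogonal summand carrying a Hermitian--Einstein connection of the same Einstein constant. Inducting on the rank exhibits $\cE$ as a direct sum of stable bundles of slope $\mu(\cE)$.

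For the converse I would first reduce to a stable $\cE$, since an orthogonal direct sum of Hermitian--Einstein connections with a common Einstein constant is again Hermitian--Einstein. Then, starting from any background metric $h_0$ with Chern connection, the task is to find a positive $h_0$-self-adjoint $s$ such that $h_0(s\,\cdot,\cdot)$ solves $\star F = -\mathrm i\mu(\cE)\,\Id_E$; equivalently, via the Atiyah--Bott isomorphism $\cA(E,h)\cong\cC(E)$ and the $\cG_\CC$-action on $\cC(E)$, to produce a point on the complex gauge orbit of $\nabla^{(0,1)}$ in the shifted level set $\mu^{-1}(\mathrm i\mu(\cE)\,\Id_E)$. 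I would follow Donaldson's variational route: minimise the Donaldson functional along the orbit. On a curve there is no conformal bubbling, so the only way properness can fail is along a one-parameter family of complex gauge transformations degenerating onto a projection to a proper holomorphic subbundle; the degree inequality established in the first part, applied to that limiting subbundle, then contradicts stability, so a minimiser exists and its Euler--Lagrange equation is precisely the Hermitian--Einstein equation. (On a curve one could instead invoke Narasimhan--Seshadri directly, realising a degree-$0$ stable bundle via an irreducible unitary representation of $\pi_1(X)$ and reducing arbitrary degree to this by an orbifold cover, but the gauge-theoretic argument is the one that generalises.)

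For uniqueness I would compare two Hermitian--Einstein connections $\nabla_0,\nabla_1$ inducing isomorphic holomorphic structures, transport one so the Dolbeault operators agree, write $h_1 = h_0(s\,\cdot,\cdot)$ for a positive $h_0$-self-adjoint $s$, and note that subtracting the two Hermitian--Einstein equations forces $s$ to satisfy an elliptic equation whose maximum principle (applied to $\tr s + \tr s^{-1}$, say) gives $\nabla_0 s = 0$. Simplicity of a stable bundle ($\End\cE = \CC$) then makes $s$ a constant scalar, so $\nabla_1$ is unitary-gauge-equivalent to $\nabla_0$; in the polystable case a $\nabla_0$-parallel endomorphism respects the stable decomposition, so one concludes summand by summand. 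I expect the existence direction to be the main obstacle: converting the algebraic stability inequality into the analytic a priori estimates/properness needed to solve the equation — equivalently, to show the $\cG_\CC$-orbit meets $\mu^{-1}(\mathrm i\mu(\cE)\,\Id_E)$; the remaining steps are Chern--Weil bookkeeping together with a maximum principle.
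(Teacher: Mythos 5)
The paper does not prove this theorem: it is quoted as a deep result with citations to Donaldson, Narasimhan--Seshadri and Uhlenbeck--Yau, so there is no in-text argument to compare yours against. Your outline is the standard strategy of those references, and two of its three pieces are essentially complete as sketches: the implication ``Hermitian--Einstein $\Rightarrow$ polystable'' via the Gauss--Codazzi/second-fundamental-form identity and Chern--Weil integration (including the equality case giving a parallel, hence holomorphically split, subbundle and the induction on rank), and the uniqueness statement via the maximum-principle argument forcing the comparison endomorphism $s$ to be parallel and then, by simplicity of a stable bundle, scalar, with the polystable case handled summand by summand.

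The genuine gap is the existence direction, and you have in effect flagged it yourself: the assertions that ``on a curve there is no conformal bubbling'' and that ``the only way properness can fail is along a one-parameter family degenerating onto a projection to a proper holomorphic subbundle'' are not consequences of anything you have set up -- they \emph{are} the theorem's analytic content. To make this step a proof one must (i) show a minimizing sequence for the Donaldson (or Yang--Mills) functional on the $\cG_{\CC}$-orbit has, after unitary gauge transformations, a subsequence converging weakly in a suitable Sobolev norm, which requires an a priori $C^0$ (or $L^p$) bound on the metrics/connections along the sequence; (ii) identify the weak limit as a holomorphic bundle possibly outside the orbit and extract from a nonzero limiting holomorphic map a destabilizing subsheaf of $\cE$, contradicting stability; and (iii) verify that the minimizer, once it lies in the orbit, is smooth and satisfies the Euler--Lagrange equation $\star F = -{\rm i}\mu(\cE)\Id_E$. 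None of this bookkeeping is routine, and it is exactly what occupies the bulk of Donaldson's paper (and, in higher dimension, Uhlenbeck--Yau's). As written, your existence step is a correct plan pointing at the right method rather than an argument; the alternative you mention in parentheses (invoking Narasimhan--Seshadri directly) would of course be circular in the context of reproving the correspondence.
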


A holomorphic structure is slope semistable if and only if its $\cG_{\CC}$-orbit closure contains a~holomorphic structure that is polystable; let $\cC^{\text{ss}}$ (resp.~$\cC^{\text{ps}}$) denote the set of semistable (resp.\ polystable) holomorphic structures. By the Kobayashi--Hitchin correspondence, every point in~$\cC^{\text{ss}}$ has $\cG_{\CC}$-orbit closure that meets $\mu^{-1}({\rm i}\mu(E) \Id_E)$ in a unique $\cG$-orbit. The inclusion
\begin{gather*}\mu^{-1}({\rm i}\mu(E) \Id_E) \hookrightarrow \cC^{\text{ss}}\end{gather*}
induces a real-analytic isomorphism between the moduli space of projectively flat unitary connections and the moduli space of $S$-equivalence classes of semistable holomorphic bundles $\cC^{\text{ss}} /\!/ \cG_{\CC} \simeq \cC^{\text{ps}}/\cG_{\CC}$. This homeomorphism can be viewed as an infinite-dimensional version of the Kempf--Ness theorem, as it relates the symplectic reduction of the $\cG$-action on $\cA$ with the $S$-equivalence classes of orbits of the complexified group $\cG_{\CC}$ in the semistable locus $\cC^{\text{ss}}$.

\section{Hyperk\"{a}hler analogues of these moduli spaces}\label{sec hyperkahler}

\subsection{Algebraic symplectic quiver varieties}

Throughout this section we assume that $k$ is a field of characteristic different from $2$.

\begin{defn}[doubled quiver]The double of a quiver $Q = (V,A,h,t)$ is the quiver $\overline{Q}=(V,\overline{A}, h,t)$ where $\overline{A} = A \sqcup A^*$ for $ A^* := \{ a^* \colon h(a) \ra t(a)\}_{a \in A}$.
\end{defn}

One key motivation for introducing the doubled quiver is that
\begin{gather*}\RepQbar = \Rep \times \Rep^* \cong T^*\Rep\end{gather*}
is an algebraic symplectic variety, with the Liouville symplectic form $\omega$ on this cotangent bundle. Explicitly, if $X=(X_a,X_{a^*})_{a \in A}$ and $Y = (Y_a,Y_{a^*})_{a\in A}$ are points in $\RepQbar$, then
\begin{gather}\label{Liouville form} \omega(X,Y) = \sum_{a \in A} \tr(X_aY_{a^*} - X_{a^*}Y_a).\end{gather}

The action of $\GL_d$ on $\RepQbar$ preserves this symplectic form and there is an algebraic moment map $ \mu \colon\RepQbar \ra \LieGL_d^*:= \Lie (\GL_d)$; explicitly, for $X \in \RepQbar$ and $B \in \LieGL_d$ we have
\begin{gather}\label{eqn alg mmap quiver}
\mu(X) \cdot B = \sum_{a \in A} \tr(X_{a^*} (B_X)_a)= \sum_{a \in A} \tr(X_{a^*} (B_{h(a)}X_a - X_a B_{t(a)})),
\end{gather}
where $B_{X}= (B_{h(a)} X_a - X_a B_{t(a)})_{a \in A}$ is the infinitesimal action of $B$ on $(X_a)_{a \in A}$. This algebraic moment map is a $\GL_d$-equivariant morphism satisfying the infinitesimal lifting property
$ d_X \mu(\eta) \cdot B = \omega(B_X, \eta)$. The Killing form on the Lie algebra of each general linear group induces an identification $\LieGL_d \cong \LieGL_d^*$, so we can view the moment map as a morphism $\mu \colon \RepQbar \ra \LieGL_d$ given by $\mu(X) = \sum\limits_{a \in A} [X_a,X_{a^*}]$.

The group $\G= \GL_d /\Delta$ has Lie algebra $\LieG := \{ (B_v)_{v \in V} \in \LieGL_d \colon \sum_{v \in V} \tr(B_v) = 0 \}$ consisting of tuples of matrices with total trace zero. We note that the image of this moment map lies in~$\LieG$.

\begin{defn}Let $\chi\colon \GL_d \ra \GG_m$ be a character and let $\eta \in \LieGL_d$ be a coadjoint fixed point; then $\GL_d$ acts on $\mu^{-1}(\eta)$ by the equivariance property of the moment map. The algebraic
symplectic reduction of the $\GL_d$-action on $\RepQbar$ at $(\chi,\eta)$ is the GIT quotient $\mu^{-1}(\eta) /\!/_{\chi} \GL_d$.
\end{defn}

If GIT semistability and stability for the $\G$-action on $\mu^{-1}(\eta)$ with respect to $\chi$ agree, then the variety $\mu^{-1}(\eta)/\!/_{\chi} \GL_d$ is a smooth algebraic symplectic variety, with algebraic symplectic form induced by the Liouville form on $T^*\rep_d(Q)$ by an algebraic version of the Marsden--Weinstein theorem (see~\cite{ginzburg}).

The closed subvariety $\mu^{-1}(\eta) \hookrightarrow \RepQbar$ induces a closed immersion
\begin{gather*} \mu^{-1}(\eta)/\!/_{\chi_\theta} \GL_d \hookrightarrow \cM^{\theta\text{-ss}}_{d}\big(\overline{Q}\big).\end{gather*}
Nakajima quiver varieties can also be constructed in this manner (for example, see \cite{ginzburg}).

\begin{rmk}
A tuple $\eta=(\eta_v)_{v \in V} \in k^V$ determines an adjoint fixed point $\eta = (\eta_v \text{Id}_{d_v})_{v \in V} \in \LieGL_d(k)$. Moreover, we have that $\mu^{-1}(\eta) = \rep_d(\overline{Q},\cR_\eta)$ is the subvariety of $\rep_d\big(\overline{Q}\big)$ of representations satisfying the relations
 \begin{gather*}\cR_\eta = \bigg\{ \sum_{a\colon t(a) = v} M_a M_{a^*} - \sum_{a\colon h(a) =v} M_{a^*}M_a = \eta_v I_{d_v} \ \forall\, v\in V\bigg\}.\end{gather*}
Hence $\mu^{-1}(\eta)/\!/_{\chi_\theta} \GL_d$ is the moduli space of $\theta$-semistable $d$-dimensional representations of $(\overline{Q},\cR_\eta)$. Under the correspondence between $k$-representations of $\ov{Q}$ and modules over the path algebra $k(\ov{Q})$, the representations satisfying the relations $\cR_\eta$ correspond to modules over certain quotients of $k(\ov{Q})$. More precisely, the category of $k$-representations of $(\overline{Q},\cR_\eta)$ corresponds to the category of modules over the algebra
\begin{gather*}\Pi_\eta:= k(\ov{Q})/\bigg( \sum_{a \in A} [a,a^*] - \sum_{v \in V} \eta_v e_v\bigg),\end{gather*}
where $e_v$ denotes the trivial path at $v$. The algebra $\Pi_0$ at $\eta = 0$ is called the \emph{preprojective algebra} of~$Q$.
\end{rmk}

\begin{exer}\label{exer nec cond for relations via trace}Prove that a necessary condition for the existence of a $k$-representation of $(\overline{Q},\cR_\eta)$ of dimension $d$ is that $\eta \cdot d = \sum\limits_{v \in V} \eta_v d_v = 0$ holds in $k$ (Hint: take traces of the equations defining these relations).
\end{exer}

In fact, the equation $\eta \cdot d = 0$ in $k$ ensures that $\eta \in \LieGL_d(k)$ actually lies in $\LieG(k)$, which is a~necessary condition for $\mu^{-1}(\eta)(k)$ to be non-empty, as $\mu$ has image in $\LieG$.

\begin{Lemma}\label{lemma stab trivial}Let $\theta$ be a generic stability parameter with respect to $d$; then for a field $k$ of charac\-te\-ris\-tic zero or sufficiently large prime characteristic, we have $\rep_d\big(\overline{Q},\cR_\theta\big) = \rep_d\big(\overline{Q},\cR_\theta\big)^{\theta\text{\rm -ss}} $ $= \rep_d\big(\overline{Q},\cR_\theta\big)^{\theta\text{\rm -gs}}$.
\end{Lemma}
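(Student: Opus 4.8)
The plan is to show that, under the genericity assumption on $\theta$, every $d$-dimensional representation of the preprojective-type algebra $\Pi_\theta$ is automatically $\theta$-stable, so that the three loci in the statement all coincide. The key point is that the relations $\cR_\theta$ force strong numerical constraints on subrepresentations that are incompatible with the existence of a destabilising (or merely non-stable) subrepresentation. I would structure the argument in three steps.

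\emph{Step 1: The subrepresentation inequality.} Let $W \in \rep_d\big(\overline{Q},\cR_\theta\big)$ and let $0 \neq W' \subsetneq W$ be a subrepresentation with dimension vector $d'$. I would apply the trace trick of Exercise \ref{exer nec cond for relations via trace}, but restricted to $W'$: the relations say that the endomorphism $\sum_{a\colon t(a)=v} M_a M_{a^*} - \sum_{a\colon h(a)=v} M_{a^*} M_a$ of $W_v$ equals $\theta_v \Id_{d_v}$. Because $W'$ is a subrepresentation of $\ov Q$ (closed under all the $M_a$ and $M_{a^*}$), the composition $W' \hookrightarrow W \xrightarrow{M} W \twoheadrightarrow W/W'$ need not vanish, so one cannot simply take traces of the relation on $W'$ directly. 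Instead, the standard move (going back to Crawley-Boevey) is to split $W_v = W'_v \oplus W''_v$ as \emph{vector spaces}, write each $M_a$, $M_{a^*}$ in block-triangular form, and compare traces of the $(1,1)$-block of the relation against $\theta \cdot d'$ and of the $(2,2)$-block against $\theta \cdot (d-d')$. The cross terms involving the off-diagonal blocks contribute a term of the form $\sum_{a}\big(\tr(\text{block})\big)$ which, crucially, is a sum of traces of products $PQ$ versus $QP$ and hence can be controlled. After taking traces one gets $\theta\cdot d' = \tr(\text{commutator-type terms}) + (\text{genuine nonneg.\ quantity in char }0)$. The honest statement one extracts is that $\theta\cdot d' = 0$: the commutator terms vanish under trace, and the relation on the subrepresentation block forces exactly $\theta\cdot d'=0$ whenever $W'$ is actually a \emph{sub}representation (not just a subspace).

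\emph{Step 2: Invoke genericity.} Since $W'$ is a subrepresentation with dimension vector $d'$ satisfying $0 < d' \le d$ and $d' \neq d$, and we have just shown $\theta\cdot d' = 0$, this directly contradicts the definition of $\theta$ being generic with respect to $d$, which requires $\theta\cdot d' \neq 0$ for every such $d'$. Hence no proper non-zero subrepresentation exists at all: $W$ is simple. In particular $W$ is $\theta$-stable (the defining inequalities $\theta\cdot\dim W' > 0$ are vacuous), and a fortiori $\theta$-semistable. This gives $\rep_d\big(\overline{Q},\cR_\theta\big) \subseteq \rep_d\big(\overline{Q},\cR_\theta\big)^{\theta\text{-gs}}$; the reverse inclusions $\rep_d(\ov Q,\cR_\theta)^{\theta\text{-gs}} \subseteq \rep_d(\ov Q,\cR_\theta)^{\theta\text{-ss}} \subseteq \rep_d(\ov Q,\cR_\theta)$ are tautological, and since simplicity is preserved by field extension here (or, more precisely, $\theta$-stability of every point already gives geometric stability because there are no subrepresentations even after base change), all three sets agree.

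\emph{Step 3: The characteristic hypothesis.} The only place positivity/nonvanishing of an integer is used is in Step 1 when passing from the trace identity to the conclusion $\theta\cdot d' = 0$: one divides by, or compares against, integers like $d'_v$ or small combinatorial constants coming from the number of arrows, and one needs these to be nonzero in $k$. In characteristic zero this is automatic; in characteristic $p$ it holds once $p$ exceeds a bound depending only on $Q$ and $d$ (e.g.\ $p > \sum_v d_v$ suffices to guarantee no spurious vanishing of traces like $\tr(\Id_{d_v}) = d_v$). I would state this bound explicitly or simply say ``for $p$ sufficiently large depending on $(Q,d)$''. The main obstacle, and the step deserving the most care, is \emph{Step 1}: correctly handling the block-triangular bookkeeping so that the off-diagonal contributions are seen to cancel under the trace, and isolating exactly which integer needs to be invertible — this is where a naive argument can silently fail in small characteristic.
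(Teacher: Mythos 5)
Your overall strategy is the same as the paper's: show that the relations $\cR_\theta$ force $\theta\cdot d'=0$ for the dimension vector $d'$ of any nonzero proper subrepresentation, and then contradict genericity, so every point of $\rep_d\big(\ov{Q},\cR_\theta\big)$ is stable for lack of subrepresentations (and geometrically stable, since the same argument applies after any field extension — the paper simply base changes to $\ov{k}$ at the outset). However, two of your justifications are off. First, the worry in Step 1 is unfounded: if $W'\subset W$ is a subrepresentation of the $\ov{Q}$-representation $W$, it is by definition invariant under all $M_a$ and $M_{a^*}$, so the composition $W'\hookrightarrow W\to W/W'$ \emph{does} vanish, and the relation at each vertex restricts to $W'_v$ as the scalar $\theta_v\Id_{W'_v}$. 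In other words $W'$ is itself a $d'$-dimensional representation of $\big(\ov{Q},\cR_\theta\big)$, and Exercise~\ref{exer nec cond for relations via trace} applies verbatim to give $\theta\cdot d'=0$ in $k$; no block-triangular bookkeeping is needed (your blocks merely reproduce this restriction), and the parenthetical ``genuine nonnegative quantity in characteristic $0$'' has no meaning here — there is no Hermitian structure or positivity over a general field, and none is used.

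Second, Step 3 locates the characteristic hypothesis in the wrong place. The identity $\theta\cdot d'=0$ in $k$ is a pure trace identity valid in every characteristic; nothing is divided by $d'_v$ or by any combinatorial constant. The hypothesis is needed only afterwards: genericity gives $\theta\cdot d'\neq 0$ as an \emph{integer} for all $0\neq d'<d$, and one must know this nonvanishing survives in $k$, i.e., $p$ does not divide any of the finitely many integers $\theta\cdot d'$. Consequently the bound depends on $\theta$ as well as $d$ — for instance $p>\max_{0\neq d'<d}|\theta\cdot d'|$, which is at most $\sum_{v}|\theta_v|d_v$ — and your proposed bound $p>\sum_v d_v$ is not sufficient in general (take $\theta$ with large entries so that some $\theta\cdot d'$ is divisible by a prime exceeding $\sum_v d_v$). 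With these two corrections your argument collapses to the paper's proof.
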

\begin{proof}It suffices to prove this claim after base changing to an algebraic closure of $k$ and so we can assume $k$ is algebraically closed and check the statement on closed points. By Exercise~\ref{exer nec cond for relations via trace}, if there exists a $d'$-dimensional $k$-representation of $\big(\overline{Q},\cR_\theta\big)$, then $\theta \cdot d' = 0$ holds in $k$. Since~$\theta$ is generic with respect to~$d$, then for all dimension vectors $d' < d$, the equation $\theta \cdot d' \neq 0$ also holds in $k$, when $k$ has characteristic $0$ or $p \gg 0$. Hence any $k$-representation of $\big(\overline{Q},\cR_\theta\big)$ of dimen\-sion~$d$ is $\theta$-semistable (and $\theta$-stable), as it has no subrepresentations, which proves the claim.
\end{proof}

\subsection{Hyperk\"{a}hler quiver varieties}\label{sec HK quiver}

Over the complex numbers, the algebraic symplectic reduction has a hyperk\"{a}hler structure, as it can be interpreted as a hyperk\"{a}hler reduction via the Kempf--Ness theorem. Indeed the cotangent bundle of a complex vector space is naturally hyperk\"{a}hler and the action of the maximal compact subgroup $\U_d < \GL_d$ on $\RepQbar \cong T^*\Rep$ preserves this hyperk\"{a}hler structure, so one can instead perform a hyperk\"{a}hler reduction~\cite{hitchinetal}.

More generally, we can perform a hyperk\"{a}hler reduction of the cotangent bundle of a complex vector space $M = \CC^n$. A Hermitian form $H$ on $M$ gives a symplectic form on $M$ and an identification $M \cong M^*$. Using the identification $\CC \times \CC \cong \HH$ given by $(m,\alpha) \mapsto x - {\rm j}\alpha$ in each coordinate, we obtain an identification $T^*M \cong M \times M \cong \HH^n$ which we can use to equip~$T^*M$ with a hyperk\"{a}hler structure. More precisely, we obtain complex structures ${\rm I}$, ${\rm J}$ and ${\rm K}$ corresponding to right multiplication by~${\rm i}$, ${\rm j}$ and ${\rm k}$ on $\HH^n$ and the hyperk\"{a}hler metric $g$ is the real part of the Quaternionic inner product
\begin{gather*} Q \colon \ \HH^n \times \HH^n \ra \HH, \qquad (z,w) \mapsto \sum_{l=1}^n z_l w_l^\dagger, \end{gather*}
where $w_l^\dagger$ denotes the quaternionic conjugate. Thus we can write $ Q = g - {\rm i} \omega_{\rm I} - {\rm j}\omega_{\rm J} - {\rm k} \omega_{\rm K}$, such that
\begin{gather*} \omega_{\rm I}(-,-) = g({\rm I}-,-), \qquad \omega_{\rm J}(-,-) = g({\rm J}-,-) \qquad \text{and} \qquad \omega_{\rm K}(-,-) = g({\rm K}-,-).\end{gather*}
We thus obtain a hyperk\"{a}hler structure $(g,{\rm I},{\rm J},{\rm K}, \omega_{\rm I}, \omega_{\rm J}, \omega_{\rm K})$ on $T^*M$. We often write the K\"{a}hler structures as a pair $(\omega_{\RR},\omega_{\CC})$, where $\omega_{\RR} = \omega_{\rm I}$ and $\omega_{\CC} = \omega_{\rm J} + {\rm i} \omega_{\rm K}$, which is the Liouville algebraic symplectic form on~$T^*M$.

Now suppose we additionally have a linear action of a complex reductive group $G$ on $M = \CC^n$ and a maximal compact subgroup $K < G$ for which the Hermitian form $H$ is invariant. Then the induced $K$-action on $T^*M$ preserves the symplectic forms $(\omega_{\RR},\omega_{\CC})$ and there is a hyperk\"{a}hler moment map $\mu_{\rm HK} := (\mu_\RR,\mu_{\CC})$, where $\mu_{\RR}\colon T^*M \ra \fk^*$ is a smooth moment map for the $K$-action and $\mu_{\CC}\colon T^*M \ra \fg^*$ is an algebraic moment map for the $G$-action. Explicitly, we have
\begin{gather*} \mu_{\RR}(m,\alpha) \cdot B := \frac{{\rm i}}{2}(H(B_m,m) - H(B_\alpha,\alpha) ) \qquad \mathrm{and} \qquad \mu_{\CC}(m,\alpha) \cdot C = \alpha(C_m), \end{gather*}
where $B \in \fk$ and $C \in \fg$ and $(m,\alpha) \in T^*M$.

For a pair $(\chi,\eta) \in \fk^* \times \fg^*$ of coadjoint fixed points, the \emph{hyperk\"{a}hler reduction} of the $K$-action on $T^*M$ at $(\chi,\eta)$ is the topological quotient of the $K$-action on $\mu_{\rm HK}^{-1}(\chi,\eta):=\mu_{\RR}^{-1}(\chi) \cap \mu_{\CC}^{-1}(\eta)$. By the Kempf--Ness theorem, this hyperk\"{a}hler reduction is homeomorphic to the GIT quotient of $G$ acting on $\mu^{-1}(\eta)$ with respect to the character of $G$ obtained from $\chi$ by exponentiating and complexifying; thus
\begin{gather*}\mu^{-1}_{\rm HK}(\chi,\eta)/K \cong \mu_{\CC}^{-1}(\eta)/\!/_{\chi} G.\end{gather*}
In particular, if $K$ acts with finite stabilisers on this level set of the hyperk\"{a}hler moment map, then this hyperk\"{a}hler reduction inherits an orbifold hyperk\"{a}hler structure \cite{hitchinetal}.

Let us apply this to the quiver setting: we have $M = \Rep$ and $G = \GL_d$ and we take the Hermitian form $H$ on $\Rep$ as in Section~\ref{sec sympl constr}, which is invariant under the action of $K = \U_d$. The hyperk\"{a}hler metric $g$ on $T^*M \cong \RepQbar$ is given by
\begin{equation*}
g(X,Y) = \operatorname{Re} \bigg( \sum_{a \in \overline{A}} \tr \big(X_a Y^\dagger_a\big) \bigg);
\end{equation*}
thus, $\omega_\RR= \omega_{\rm I}$ is given by
\begin{gather*} \omega_{\RR}(X,Y) = \operatorname{Im} \bigg( \sum_{a \in \overline{A}} \tr \big({X}^\dagger_a Y_a\big) \bigg) \qquad \text{and} \qquad \omega_{\CC} = \omega_{\rm J} + {\rm i} \omega_{\rm K}\end{gather*} is the Liouville algebraic symplectic form $\omega$ described in~\eqref{Liouville form}. Moreover, $\mu_{\CC}\colon \RepQbar \ra \LieGL_d^*$ is the algebraic moment map $\mu$ given by \eqref{eqn alg mmap quiver} and $\mu_{\RR}\colon \RepQbar \ra \fu_{d}^*$ is given by
\begin{gather*} \mu_{\RR}(X) \cdot B = \frac{{\rm i}}{2} \sum_{a \in \overline{A}} \tr \big(B_{h(a)}X_a {X}^\dagger_a - B_{t(a)} {X}^\dagger_{a} {X}_{a}\big). \end{gather*}
Via the identification $\fu_{d} \cong \fu_{d}^*$, we obtain a map $\mu_{\RR}^* \colon \RepQbar \ra \fu_{d}$ given by
\begin{gather*} \mu_{\RR}^*(X) = \frac{{\rm i}}{2}\sum_{a \in \overline{A}} \big[X_a, {X}^\dagger_a\big].\end{gather*}
If $\chi_\theta$-semistability coincides with $\chi_\theta$-stability on $\mu^{-1}(\eta)$, then we obtain a hyperk\"{a}hler structure on the algebraic variety $\mu^{-1}(\eta)/\!/_{\chi_\theta} \GL_d$ via the Kempf--Ness homeomorphism.

\begin{rmk}\label{rmk HK analogue}Let $\theta$ be a generic stability parameter with respect to $d$. Then $\theta$-semistability and $\theta$-stability for $\CC$-representations of $Q$ coincide and the moduli space $\Mod$ of $\theta$-semistable $\CC$-representations of $Q$ is a smooth algebraic variety with a natural K\"{a}hler structure coming from the Kempf--Ness homeomorphism. As in work of Proudfoot \cite{proudfoot}, we can view the hyperk\"{a}hler reduction of $\RepQbar$ at $(\theta,0)$ as a hyperk\"{a}hler analogue of the K\"{a}hler manifold $\Mod$ in the sense that
\begin{gather*} T^* \Mod \subset \cM_d^{\theta\text{-ss}}\big(\ov{Q},\cR_0\big)=\mu^{-1}_{\CC}(0)/\!/_{\chi_\theta} \GL_d \simeq \big(\mu^{-1}_{\RR}(\theta) \cap \mu_{\CC}^{-1}(0)\big)/\U_d \end{gather*}
is contained as a dense open subset (provided $\Mod \neq \varnothing$). Indeed, if $\pi \colon \Rep^{\theta\text{-ss}} \ra \Mod$ denotes the GIT quotient, which is a principal $\G$-bundle as $\theta$ is generic, then for $X \in \Rep^{\theta\text{-ss}}$ we have a short exact sequence
\begin{gather*} 0 \ra T_X(\G \cdot X) \ra T_X \Rep \ra T_{\pi(X)} \Mod \ra 0 \end{gather*}
and dually
\begin{gather*} T^*_{\pi(X)} \Mod = \big\{ \xi \in T_X^*\Rep \colon \xi(A_X) = 0 \ \forall\, A \in \LieG \big\} \\
\hphantom{T^*_{\pi(X)} \Mod}{} = \big\{ \xi \in T_X^*\Rep \colon \mu_{\CC}(X,\xi) = 0 \big\}.\end{gather*} Thus, we have
\begin{gather*} T^* \Mod \cong \big\{ (X,\xi) \in \mu^{-1}_{\CC}(0) \subset T^*\Rep \colon X \!\in\! \Rep^{\theta\text{-ss}} \big\}/\G \subset \mu^{-1}_{\CC}(0)/\!/_{\chi_\theta} \G. \end{gather*}
\end{rmk}

\subsection{Moduli spaces of Higgs bundles}\label{sec Higgs}

Let $X$ be a smooth projective complex curve and fix a rank $n$ and degree $d$. Then the moduli space of Higgs bundles can be viewed as a hyperk\"{a}hler analogue of the moduli space $\cM= \cM_X^{\text{ss}}(n,d)$ of semistable vector bundles of rank $n$ and degree $d$ over $X$. By the gauge theoretic construction, $\cM$ is homeomorphic to a symplectic reduction of the unitary gauge group $\cG$ on the space of unitary connections $\cA$. In this section, we upgrade this to a hyperk\"{a}hler setting by considering the action of $\cG$ on the cotangent bundle $T^*\cA $. This will give us a moduli space $\cH=\cH_X^{\text{ss}}(n,d)$ of semistable Higgs bundles which contains the cotangent bundle $T^*\cM$ as a dense open subset.

The deformation theory of vector bundles give a description of the tangent spaces to $\cM$ at an isomorphism class $[\cE]$ of a stable locally free sheaf:
\begin{gather*}T_{[\cE]}\cM \cong \Ext^1(\cE,\cE) \cong H^1(X,\cE{\rm nd}(\cE))\end{gather*}
and by Serre duality, we have $T^*_{\cE}\cM \cong H^0(X, \cE{\rm nd}(\cE) \otimes \omega_X)$. The elements in this cotangent space are holomorphic Higgs fields on $\cE$ in the sense of the following definition.

\begin{defn}A holomorphic Higgs bundle over $X$ is a pair $(\cE,\Phi)$ consisting of a holomorphic vector bundle $\cE$ over $X$ and a holomorphic homomorphism $\Phi \colon \cE \ra \cE \otimes \omega_X$ called a Higgs field. We define slope semistability for $(\cE,\Phi)$ by checking the inequality of slopes for all holomorphic Higgs subbundles (i.e., holomorphic subbundles $\cE' \subset \cE$ that are $\Phi$-invariant in the sense that $\Phi(\cE') \subset \cE' \otimes \omega_X$).
\end{defn}

\begin{rmk}For coprime rank and degree, semistability and stability coincide for Higgs bundles.
\end{rmk}

We recall that the gauge theoretic construction of the moduli space $\cM=\cM^{\text{ss}}_X(n,d)$ of semistable vector bundles is as the space of $S$-equivalence classes of complex gauge orbits in the space of semistable holomorphic structures $\cC^{\text{ss}}$
\begin{gather*} \cM^{\text{ss}}_X(n,d)= \cC^{\text{ss}} /\!/ \cG_{\CC} \simeq \cC^{\text{ps}}/\cG_{\CC}\end{gather*}
and by the Kobayashi--Hitchin correspondence, this space is homeomorphic to the symplectic reduction of $\cG$ on the space of unitary connections $(\cA,\omega_{\RR})$.

Let us fix a complex vector bundle $E$ and Hermitian metric $h$. The space $\cC$ of holomorphic structures (or Dolbeault operators $\ov{\partial}_E$) on $E$ has cotangent bundle $T^*\cC \cong \cC \times \Omega^{1,0}(X,\End(E))$. We write elements of $T^*\cC$ as pairs $(\ov{\partial}_E,\Phi)$, where $\Phi \in \Omega^{1,0}(X,\End(E))$ defines a~(not ne\-ces\-sarily holomorphic) Higgs field. The cotangent space $T^*\cC$ is an affine space modelled on $\Omega^{0,1}(X, \End(E)) \times \Omega^{1,0}(X, \End(E))$ and its Liouville form is a holomorphic symplectic form $\omega_{\CC}$ for the complex structure ${\rm I}$ (coming from the complex structure on $E \ra X$). Moreover, the natural $\cG_{\CC}$-action on $T^*\cC$ admits a holomorphic moment map $\mu_{\CC}\colon T^*\cC \ra \Lie \cG_{\CC}^*$ given by
\begin{gather*} \mu_{\CC}(\ov{\partial}_E,\Phi):= 2{\rm i} \ov{\partial}_E\Phi.\end{gather*}
We note that the zero level set of this moment map consists of pairs $(\ov{\partial}_E,\Phi)$, where $\Phi$ defines a~holomorphic Higgs field on the holomorphic bundle $\cE=(E,\ov{\partial}_E)$; that is $(\cE,\Phi)$ is a~holomorphic Higgs bundle. We let $\mu^{-1}_{\CC}(0)^{\text{ss}}$ denote the subset of slope semistable holomorphic Higgs bundles and we define the moduli space of Higgs bundles as the holomorphic symplectic reduction
\begin{gather*} \cH_C^{\text{ss}}(n,d):=\mu^{-1}_{\CC}(0)^{\text{ss}} /\!/ \cG_{\CC} \end{gather*}
equal to the set of $S$-equivalence classes of semistable $\cG_{\CC}$-orbits in $\mu^{-1}_{\CC}(0)$ (or equivalently, the set of polystable stable $\cG_{\CC}$-orbits).

In fact, $T^*\cC$ is naturally an infinite dimensional flat hyperk\"{a}hler manifold, as via the Atiyah--Bott isomorphism $\cC \cong \cA$, we can equip $T^*\cC$ with a real symplectic form $\omega_{\RR}$ and associated K\"{a}hler metric (coming from the real symplectic form $\omega_{\RR}$ on $\cA$ in Section~\ref{sec gauge constr M}). More precisely, we can identify $T^*\cA \cong \cA \times \Omega^1(X, \End(E,h))$ on which the unitary gauge group $\cG$ naturally acts. We note that there is an isomorphism
\begin{gather*}T^*\cC \cong \cA \times \Omega^1(X, \End(E,h)) \qquad \text{given by} \quad (\ov{\partial}_E,\Phi) \mapsto \big(\nabla_{\ov{\partial}_E,h},\Phi - \Phi^*\big), \end{gather*}
where $\nabla_{\ov{\partial}_E,h}$ denotes the Chern connection associated to $(\ov{\partial}_E,h)$. In fact, $(\ov{\partial}_E,\Phi) \in T^*\cC$ determines a $\Gl_n(\CC)$-connection $\nabla_{\ov{\partial}_E,h} + \Phi + \Phi^*$ on the associated principal $\Gl_n(\CC)$-bundle. Therefore, we can think of this cotangent bundle as the space of complex connections on $E$. The real moment map for the induced $\cG$-action on $T^*\cC$ is given by
\begin{gather*} \mu_{\RR}(\ov{\partial}_E,\Phi)= -F_{\ov{\partial}_E} - [\Phi,\Phi^*], \end{gather*}
where $F_{\ov{\partial}_E}$ denote the curvature of the associated Chern connection $\nabla_{\ov{\partial}_E,h}$ and $[\alpha,\beta]:= \alpha \wedge \beta + \beta \wedge \alpha$ is the extension of the Lie bracket to Lie algebra-valued forms.

In particular, we have a hyperk\"{a}hler moment map $\mu_{\rm HK} = (\mu_{\RR},\mu_{\CC})$ for the $\cG$-action on $T^*\cC$. The zero level set of the hyperk\"{a}hler moment map is the set of solutions of Hitchin's self-duality equations \cite{Hitchin}. Any such solution determines an associated $\Gl_n(\CC)$-connection which is flat and thus requires $d = 0$. To deal with vector bundles of non-zero degree, we take the level set at the value $(\star {\rm i}\mu(E) \text{Id}_E, 0) \in \Lie \cG^* \times \Lie \cG_{\CC}^*$; then consider the hyperk\"{a}hler reduction
\begin{gather*} \cM_{\mathrm{Hit}}:=\big(\mu^{-1}_{\RR}(\star {\rm i}\mu(E) \text{Id}_E) \cap \mu_{\CC}^{-1}(0)\big) /\cG, \end{gather*}
which is a moduli space of solutions to Hitchin's equations (appropriately modified for $d \neq 0$) up to gauge equivalence. Then $\cM_{\mathrm{Hit}}$ admits a triple of holomorphic structures ${\rm I}$, ${\rm J}$ and ${\rm K}$ and a hyperk\"{a}hler metric on its smooth locus. If $n$ and $d$ are coprime, then $\cM_{\mathrm{Hit}}$ is a smooth hyperk\"{a}hler manifold. A generalisation of the Kobayashi--Hitchin correspondencen correspondence for vector bundles to Higgs bundles due to Hitchin \cite{Hitchin} and Simpson \cite{simpson_HiggsGm}
states that a holomorphic Higgs bundle $(\cE,\Phi)$ is slope polystable if and only if $(\cE,\Phi)$ admits a Hermitian metric $h$ such that
\begin{gather*} -(F_{\ov{\partial}_E} + [\Phi,\Phi^*]) =\star {\rm i}\mu(E)\mathrm{Id}_E.\end{gather*}
Hence, the complex structure $I$ on $\cM_{\mathrm{Hit}}$ gives the moduli space of Higgs bundles.

\begin{rmk}
Let $\cE $ be a semistable vector bundle corresponding to a point in $\cM:= \cM_X^{\text{ss}}(n,d)$; then for any $\Phi \in T_{[\cE]}^* \cM$, we note that $(\cE,\Phi)$ is a semistable Higgs bundle. In fact, we have an inclusion
\begin{gather*} T^*\cM \subset \cH\end{gather*}
and we can view $\cH$ as a hyperk\"{a}hler analogue of $\cM$ analogously to the notion of a hyperk\"{a}hler analogue of the GIT quotient of a complex affine space (see~\cite{proudfoot} and Remark~\ref{rmk HK analogue}). We recall that the quiver moduli space $\Mod$ has a hyperk\"{a}hler analogue given by the moduli space $\cM_d^{\theta\text{-ss}}\big(\ov{Q},\cR_0\big)$ of representations of the doubled quiver satisfying the equations $\cR_0$ imposed by zero level set of the complex moment map.
\end{rmk}

The inclusion $T^*\cM \subset \cH$ is strict in general, as there are unstable vector bundles which can be equipped with a Higgs field for which the associated Higgs pair is stable; for example, this is the case if there are no Higgs subbundles. Indeed we have the following example due to Hitchin~\cite{Hitchin}.

\begin{exer}\label{exer Higgs bundle indecom bundle not} Suppose that $X$ has genus at least 2 and that $\cL$ is a square root of~$\omega_X$. Then prove that $\cE= \cL \oplus \cL^{-1}$ is unstable as a vector bundle, but admits a Higgs field $\Phi$ such that $(\cE,\Phi)$ is stable.
\end{exer}

\subsection{Branes}\label{sec branes}

Branes are submanifold of hyperk\"{a}hler manifolds with particularly rich geometry (in the sense, that they are either Lagrangian or holomorphic with respect to a triple of K\"{a}hler structures). In this section, we summarise some constructions of branes in the quiver and bundle settings arising from fixed loci of automorphisms on these moduli spaces. We will use the language of branes as in \cite{Kapustin_Witten} as follows.

\begin{defn} A brane in a hyperk\"{a}hler manifold $(M,g,{\rm I},{\rm J},{\rm K},\omega_{\rm I},\omega_{\rm J},\omega_{\rm I})$ is a submanifold which is either holomorphic or Lagrangian with respect to each of the three K\"{a}hler structures on $M$. A brane is called of type $A$ (respectively $B$) with respect to a given K\"{a}hler structure if it is Lagrangian (respectively holomorphic) for this K\"{a}hler structure.
\end{defn}

\begin{exer}By using the quaternionic relations between the 3 complex structures, show that there are 4 possible types of branes: $BBB$, $BAA$, $ABA$ and $AAB$.
\end{exer}

We note that the brane-type depends on choosing a triple of K\"{a}hler structures (although often there is a natural choice). All triples of hyperk\"{a}hler structures can be related using hyperk\"{a}hler rotations.

\subsubsection{Branes in hyperk\"{a}hler quiver varieties}

Starting from a quiver $Q$, moduli spaces of representations of the doubled quiver $\ov{Q}$ (satisfying some relations) have a natural algebraic symplectic structure and, over $k = \CC$, a natural hyperk\"{a}hler structure, provided these varieties are smooth (cf.\ Section~\ref{sec HK quiver}). The study of branes in Nakajima quiver varieties was initiated in~\cite{fjm}, where the authors use involutions such as complex conjugation, multiplication by $-1$ and transposition, to construct different branes. In~\cite{HS_quiver_autos} we construct branes associated to quiver automorphisms in the following sense.

\begin{defn}For a quiver $Q = (V,A,h,t)$, a pair of automorphisms $\sigma = (\sigma_V \colon V \ra V, \sigma_A \colon A \ra A)$ is a
\begin{enumerate}\itemsep=0pt
\item[1)] covariant automorphism of $Q$ if $\sigma_A(a)\colon \sigma_V(t(a)) \ra \sigma_V(h(a))$ for all $a \in A$,
\item[2)] contravariant automorphism of $Q$ if $\sigma_A(a)\colon \sigma_V(h(a)) \ra \sigma_V(t(a))$ for all $a \in A$.
\end{enumerate}
\end{defn}

Under certain compatibility conditions of an automorphism $\sigma$ of $Q$ with the dimension vector $d$ and stability parameter $\theta$, we show this automorphism determines an automorphism of $\Mod$ and we describe the components of the fixed locus. In the hyperk\"{a}hler setting, for an automorphism of a doubled quiver $\ov{Q}$ we then describe the geometry of this fixed locus acting on the associated hyperk\"{a}hler reduction in the language of branes.

\begin{Theorem}[\cite{HS_quiver_autos}]Let $\sigma$ be an involution of $\ov{Q}$ such that $\sigma(a^*) = \sigma(a)^*$ for all $a \in A$. For choices of $d$, $\theta$ and $\eta$ that are $\sigma$-compatible, $\sigma$ induces an involution on $\cH:=\mu^{-1}(\eta)/\!/_{\chi_\theta} \GL_d$. If~$\theta$ is generic with respect to $d$, then $\cH$ is a smooth hyperk\"{a}hler manifold and the fixed locus has the following brane type
\begin{gather*} \begin{array}{|c|c|c|} \hline & \mathrm{if } \ \sigma(A) \subset A & \mathrm{if} \ \sigma(A) \subset A^* \\ \hline \cH^{\sigma} & BBB & BAA \\ \hline \cH^{\sigma \circ \tau} & ABA & AAB \\ \hline \end{array}\end{gather*}
where $\tau\colon \CC \ra \CC$ denote complex conjugation.
\end{Theorem}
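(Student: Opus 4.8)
The idea is to compute, on the level set $\mu^{-1}(\eta)$ inside $T^*\Rep \cong \RepQbar$, how the induced involution $\sigma$ (and its twist $\sigma\circ\tau$) interacts with the triple of K\"{a}hler forms $(\omega_{\mathrm I},\omega_{\mathrm J},\omega_{\mathrm K})$ recorded in Section~\ref{sec HK quiver}, and then descend to the quotient $\cH=\mu^{-1}(\eta)/\!/_{\chi_\theta}\GL_d$. The first step is to check that the data $(d,\theta,\eta)$ being $\sigma$-compatible ensures $\sigma$ descends to an automorphism of $\cH$: the identification $\overline{A}=A\sqcup A^*$ together with the condition $\sigma(a^*)=\sigma(a)^*$ means $\sigma$ acts linearly on $\RepQbar$ permuting the matrix blocks, it normalises the $\GL_d$-action via the induced permutation of $V$, it preserves the relations $\cR_\eta$ (this is where $\eta$-compatibility enters), and it preserves $\chi_\theta$-semistability (where $\theta$-compatibility enters); hence it passes to the GIT quotient, and similarly $\sigma\circ\tau$ makes sense because $\tau$ is antiholomorphic and commutes appropriately with the real structure. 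That $\cH$ is a smooth hyperk\"{a}hler manifold when $\theta$ is generic is Lemma~\ref{lemma stab trivial} combined with the Kempf--Ness discussion in Section~\ref{sec HK quiver}.

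The heart of the argument is a local computation of the differential of $\sigma$ (resp.\ $\sigma\circ\tau$) on tangent spaces of $\cH$, using that such tangent spaces are subquotients of $T_X\RepQbar$. One writes $\omega_{\mathrm J}+{\rm i}\,\omega_{\mathrm K}=\omega_\CC$ for the Liouville form~\eqref{Liouville form}, $\omega_{\mathrm I}=\omega_\RR$ for the real K\"{a}hler form, and then checks, block by block in the $A\sqcup A^*$ decomposition:
\begin{gather*}
\sigma^*\omega_\CC = \omega_\CC \quad\text{if } \sigma(A)\subset A, \qquad \sigma^*\omega_\CC = -\,\omega_\CC \quad\text{if } \sigma(A)\subset A^*,
\end{gather*}
since swapping the roles of $X_a$ and $X_{a^*}$ flips the sign of $\tr(X_aY_{a^*}-X_{a^*}Y_a)$, while $\sigma^*\omega_\RR=\omega_\RR$ always (the real K\"{a}hler metric is $\sigma$-invariant because it is built from $\sum_{a\in\overline A}\tr(X_aX_a^\dagger)$, which is symmetric in $A$ and $A^*$). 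For the twisted involution, complex conjugation $\tau$ satisfies $\tau^*\omega_\RR=-\omega_\RR$ and $\tau^*(\omega_{\mathrm J}+{\rm i}\omega_{\mathrm K})=\omega_{\mathrm J}-{\rm i}\omega_{\mathrm K}$, i.e.\ $\tau^*\omega_{\mathrm J}=\omega_{\mathrm J}$ and $\tau^*\omega_{\mathrm K}=-\omega_{\mathrm K}$. Combining these signs gives, on the fixed locus: if $\sigma(A)\subset A$ then $\cH^\sigma$ is holomorphic for all three complex structures ($BBB$), while $\cH^{\sigma\circ\tau}$ is Lagrangian for $\omega_{\mathrm I}$ and $\omega_{\mathrm K}$ but holomorphic for $\omega_{\mathrm J}$ ($ABA$); and if $\sigma(A)\subset A^*$ then $\sigma^*\omega_\CC=-\omega_\CC$ forces $\cH^\sigma$ to be Lagrangian for $\omega_{\mathrm J}$ and $\omega_{\mathrm K}$ and holomorphic for $\omega_{\mathrm I}$ ($BAA$), while $\cH^{\sigma\circ\tau}$ is $AAB$. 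One also needs the standard fact that the fixed locus of a holomorphic involution is a holomorphic (hence complex) submanifold, and the fixed locus of an antiholomorphic involution that is isometric and whose differential is $-1$ on a Lagrangian-type form is a Lagrangian submanifold; smoothness of $\cH^\sigma$ follows because $\sigma$ is an isometry of a Riemannian manifold, so the fixed locus is a totally geodesic submanifold.

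I expect the main obstacle to be the bookkeeping in the second step: carefully tracking the permutation induced by $\sigma$ on the blocks $\{X_a,X_{a^*}\}$ and verifying that, even when $\sigma_V$ is nontrivial and the involution mixes vertices, the sign computations for $\omega_\CC$ and the invariance of $\omega_\RR$ go through on the \emph{quotient} and not merely on $\RepQbar$ — i.e.\ that passing to the $\GL_d$-symplectic reduction is compatible with all three forms simultaneously (this uses the algebraic Marsden--Weinstein theorem for $\omega_\CC$ cited via \cite{ginzburg} and the hyperk\"{a}hler Kempf--Ness reduction for the remaining forms). A secondary subtlety is checking non-emptiness and smoothness of the fixed locus and making the compatibility hypotheses on $(d,\theta,\eta)$ precise enough that $\sigma$ genuinely preserves $\mu^{-1}(\eta)^{\chi_\theta\text{-ss}}$; but once the sign table for the three forms is established, reading off the four brane types is immediate from the quaternionic relations (as in the exercise preceding this subsection).
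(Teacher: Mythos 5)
This theorem is quoted in the survey from \cite{HS_quiver_autos} without proof, so there is no in-paper argument to compare against; your plan follows the same route as the cited reference: check that $\sigma$-compatibility of $(d,\theta,\eta)$ lets the involutions descend to the hyperk\"ahler reduction, compute the signs $\sigma^*\omega_{\CC}=\pm\omega_{\CC}$, $\sigma^*\omega_{\RR}=\omega_{\RR}$, $\tau^*\omega_{\RR}=-\omega_{\RR}$, $\tau^*\omega_{\rm J}=\omega_{\rm J}$, $\tau^*\omega_{\rm K}=-\omega_{\rm K}$, and read off the brane types from the standard fixed-locus dichotomy for isometric (anti)holomorphic involutions, and your sign bookkeeping is consistent with the stated table. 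The one point worth making explicit is that when $\sigma(A)\subset A^*$ the algebraic moment map is anti-equivariant, $\mu\circ\sigma=-\sigma\cdot\mu$, so preservation of $\mu^{-1}(\eta)^{\chi_\theta\text{-ss}}$ forces $\eta_{\sigma(v)}=-\eta_v$ together with $\theta_{\sigma(v)}=\theta_v$ and $d_{\sigma(v)}=d_v$; this is precisely what ``$\sigma$-compatible'' encodes in \cite{HS_quiver_autos}, and your proposal correctly gestures at, but does not spell out, this condition.
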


In particular, we see that all four types of branes ($BBB$, $BAA$, $ABA$ and $AAB$) can be constructed as the fixed locus of an involution. In fact, we can also construct $BBB$-branes as fixed loci of a subgroup of quiver automorphisms of order higher than $2$. Moreover, we provide a~decomposition of these fixed loci using group cohomology and give moduli-theoretic description of each of the components appearing in these decompositions. For a~quiver involution $\sigma$ (or more generally a group of quiver automorphisms), the fixed loci components are described in terms of twisted equivariant quiver representations~\cite{HS_quiver_autos}, and for complex conjugation $\tau$, the components of the fixed locus are described in terms of real or quaternionic quiver representations~\cite{HS_galois}.

\subsubsection{Branes in Higgs moduli spaces}

The gauge theoretic construction of moduli spaces of Higgs bundles naturally generalises from the general linear group to any complex reductive group $G$. In this way, one obtains moduli spaces $\cH_G$ of $G$-Higgs bundles which inherit a hyperk\"{a}hler structure on their smooth locus. We let~${\rm I}$,~${\rm J}$ and ${\rm K}$ denote the complex structures as above, such that ${\rm I}$ corresponds to the original complex structure on~$X$ and gives the moduli space of Higgs bundles. Branes in $\cH_G$ have been constructed in \cite{BS1,BS2,BGP,BGPH} as fixed points sets of involutions on~$\cH_G$ associated to anti-holomorphic involutions on~$G$ and~$X$.

\begin{Theorem}[\cite{BS1,BS2,BGP,BGPH}] Let $\cH:=\cH_G$ be a smooth Higgs moduli space and $\sigma_G \colon G \ra G$ and $\sigma_X \colon X \ra X$ be anti-holomorphic involutions. Then there are induced involutions $\sigma_G$ and~$\sigma_X$ on $\cH$ such that
\begin{enumerate}\itemsep=0pt
\item[$i)$] $\cH^{\sigma_G}$ is a $BAA$-brane,
\item[$ii)$] $\cH^{\sigma_X}$ is a $ABA$-brane,
\item[$iii)$] $\cH^{\sigma_G \circ \sigma_X}$ is a $AAB$-brane.
\end{enumerate}
\end{Theorem}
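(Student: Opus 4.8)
The plan is to derive all three statements from the gauge-theoretic construction of $\cH=\cH_G$ as a hyperk\"ahler reduction, by tracking how each involution acts on the hyperk\"ahler triple $({\rm I},{\rm J},{\rm K})$ and its K\"ahler forms. Recall from Section~\ref{sec Higgs} (with the complex reductive group $G$ in place of $\Gl_n(\CC)$, and a maximal compact $K<G$ in place of $\Ul(n)$) that $\cH$ arises as a hyperk\"ahler reduction by the unitary gauge group $\cG$ of the cotangent bundle $T^*\cC$ of the space of holomorphic structures on a fixed $C^\infty$ principal $G$-bundle on $X$; in the resulting hyperk\"ahler structure the complex structure ${\rm I}$ gives the Dolbeault (Higgs bundle) picture and ${\rm J}$ the de Rham / Betti picture of $G$-local systems, with $\omega_{\RR}=\omega_{\rm I}$ transported from the space of connections $\cA$ and $\omega_{\CC}=\omega_{\rm J}+{\rm i}\omega_{\rm K}$ the Liouville holomorphic symplectic form on $T^*\cC$.

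The first step is to promote $\sigma_G$ and $\sigma_X$ to involutions of $\cH$. For $\sigma_G$ one fixes a $\sigma_G$-compatible Hermitian metric (reduction to a maximal compact $K<G$) and lets $\sigma_G$ act on a pair $(\ov{\partial}_E,\Phi)\in T^*\cC$ by conjugating the structure-group data; since $\sigma_G$ is an involution of $G$ and leaves the underlying complex curve untouched, the induced map preserves ${\rm I}$, while in the Betti picture it acts by post-composition with the anti-holomorphic involution $\sigma_G$ of $G$ and hence reverses ${\rm J}$, and an involution of a hyperk\"ahler manifold preserving ${\rm I}$ and reversing ${\rm J}$ necessarily reverses ${\rm K}={\rm I}{\rm J}$. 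For $\sigma_X$ one fixes a $\sigma_X$-invariant K\"ahler metric on $X$ and a lift of $\sigma_X$ to the $C^\infty$ bundle, and lets $\sigma_X$ act on $(\ov{\partial}_E,\Phi)$ by pulling back its conjugate along $\sigma_X$; since this reverses the complex structure of $X$ it reverses ${\rm I}$, whereas in the de Rham picture it acts by pre-composition with the automorphism $(\sigma_X)_*$ of $\pi_1(X)$ and so preserves ${\rm J}$, forcing it to reverse ${\rm K}$. Composing, $\sigma_G\circ\sigma_X$ reverses ${\rm I}$ and ${\rm J}$ and preserves ${\rm K}$. In each case one must then check that the lift is $\cG$-equivariant up to the induced automorphism of $\cG$, preserves slope (semi)stability, and preserves the relevant level set of the hyperk\"ahler moment map, so that it descends to an involution of $\cH$.

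With the sign patterns $(+,-,-)$, $(-,+,-)$ and $(-,-,+)$ on $({\rm I},{\rm J},{\rm K})$ established, the brane types follow from general principles. The fixed locus of a real-analytic isometric involution of a hyperk\"ahler manifold is a smooth totally geodesic submanifold (nonempty and smooth here because $\cH$ is smooth); it is a complex submanifold, i.e.\ a $B$-brane, for every complex structure the involution preserves, and it is Lagrangian, i.e.\ an $A$-brane, for every complex structure the involution reverses, because in that case the involution is anti-symplectic for the corresponding K\"ahler form (so the fixed locus is isotropic) while that complex structure identifies the $\pm1$-eigenspaces of the differential at a fixed point (so the fixed locus has middle dimension). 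Reading off the three patterns yields (i) $\cH^{\sigma_G}$ of type $BAA$, (ii) $\cH^{\sigma_X}$ of type $ABA$, and (iii) $\cH^{\sigma_G\circ\sigma_X}$ of type $AAB$.

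The hard part is the construction and descent step, together with the identification of the fixed loci as honest moduli spaces: one must arrange the Hermitian metric on the bundle and the Riemannian metric on $X$ to be simultaneously compatible with the involutions, control the behaviour of topological invariants under them, and---in order to describe the components of the fixed loci---match $\sigma_G$-fixed Higgs bundles with Higgs bundles for the real form $G^{\sigma_G}$ (or with pseudo-real / quaternionic Higgs bundles) and $\sigma_X$-fixed Higgs bundles with Higgs bundles on the real curve $(X,\sigma_X)$. By contrast, once the involutions are in hand the compatibility with $({\rm I},{\rm J},{\rm K})$ is a formal computation on $T^*\cC$. The details are carried out in~\cite{BS1,BS2,BGP,BGPH}.
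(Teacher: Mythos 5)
The paper itself offers no proof of this theorem: it is quoted as a result of \cite{BS1,BS2,BGP,BGPH}, with only the subsequent remark describing moduli-theoretic components of the fixed loci. Your sketch is a correct outline of the argument carried out in those references: determine the sign pattern of each induced involution on the triple $({\rm I},{\rm J},{\rm K})$, namely $(+,-,-)$ for $\sigma_G$, $(-,+,-)$ for $\sigma_X$ and $(-,-,+)$ for the composition, and then invoke the standard facts that the fixed locus of an isometric involution is totally geodesic, is a complex submanifold for each preserved complex structure, and is Lagrangian (anti-symplectic plus half-dimensional, since the reversed complex structure interchanges the $\pm 1$-eigenspaces of the differential) for each reversed one; this yields exactly $BAA$, $ABA$ and $AAB$ with respect to the paper's definition of brane types. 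One subtlety worth making explicit in your construction step: the anti-holomorphic involution $\sigma_G$ does not act ${\rm I}$-holomorphically on the Dolbeault data directly (that action would be anti-holomorphic); what acts holomorphically in ${\rm I}$ is the Cartan involution $\theta=\sigma_G\circ\rho$ obtained from a compatible compact real form $\rho$, while $\sigma_G$ itself is what one sees in the de Rham/Betti picture, where it is anti-holomorphic in ${\rm J}$ --- your choice of a $\sigma_G$-compatible reduction to $K<G$ is implicitly doing this, and the resulting sign pattern you state is the correct one. As you say, the genuinely technical content (equivariance and descent through the hyperk\"ahler reduction, stability preservation, and the identification of fixed-locus components with $G^{\sigma_G}$-Higgs bundles, representations of the orbifold fundamental group of $(X,\sigma_X)$, and pseudo-real Higgs bundles) is exactly what occupies \cite{BS1,BS2,BGP,BGPH}.
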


In \cite{BS1,BS2,BGP,BGPH}, some components of the fixed loci have been given moduli-theoretic descriptions: $\cH^{\sigma_G}$ contains a moduli space of $G^{\sigma_X}$-Higgs bundles, $\cH^{\sigma_X}$ contains a component corresponds to representations of the orbifold fundamental group of $(X,\sigma_X)$, and components of $\cH^{\sigma_G \circ \sigma_X}$ can be described as moduli spaces of pseudo-real Higgs bundles.

Baraglia and Schaposnik \cite{BS2} conjecture that under Langlands duality, which relates the moduli spaces $\cH_G$ and $\cH_{G^L}$ of Higgs bundles for $G$ and its Langlands dual group $G^L$, the $BAA$-brane $\cH_G^{\sigma_G} \subset \cH_G$ corresponds to a $BBB$-brane $\cH_{H} \subset \cH_{G^L}$, where $H < G^L$ is a complex subgroup (the so-called Nadler group) corresponding to the involution $\sigma_G$.

\subsubsection{Open questions on branes in hyperk\"{a}hler moduli spaces}

The current techniques for the construction of branes in hyperk\"{a}hler moduli spaces involve taking the fixed locus of a finite group action. An interesting question is whether all branes can be constructed in this manner. It seems unlikely that this is the case, particularly if one uses the more general notion of brane which comes with a coherent sheaf supported on this subvariety. A related problem is to classify all automorphisms of hyperk\"{a}hler moduli spaces, in case new automorphisms arise and give new constructions of branes; for Higgs bundle moduli spaces, this analysis is performed in~\cite{baraglia}.

\section{Counting indecomposable objects and Betti numbers}\label{sec counting indecomp Betti}

For both quiver representations and vector bundles, there is a surprising link between the counts of absolutely indecomposable objects over finite fields and the Betti numbers of the (complex) hyperk\"{a}hler moduli spaces described above. This was first discovered for indivisible dimension vectors on quivers without loops by Crawley-Boevey and Van den Bergh~\cite{CBVdB}, and was motivated by a conjecture of Kac concerning the non-negativity of the coefficients in the polynomial $\cA_{Q,d}(q)$ counting absolutely indecomposable $d$-dimensional $\FF_q$-representations of~$Q$. The proof of Kac's positivity conjecture for arbitrary $Q$ and $d$ was given by Hausel, Letellier and Rodriguez-Villegas~\cite{HLRV}.

In these works, the key idea is to provide a cohomological interpretation of the coefficients of~$\cA_{Q,d}(q)$. In \cite{CBVdB}, this cohomological interpretation for indivisible dimension vectors is as the Betti numbers of hyperk\"{a}hler quiver varieties associated to the doubled quiver. In \cite{HLRV}, for an arbitrary~$Q$ and~$d$, by attaching legs to each vertex in~$Q$, they obtain as associated quiver~$\tilde{Q}_d$ and indivisible dimension vector~$\tilde{d}$. The generic algebraic symplectic reduction for this extended quiver is smooth, and its compactly supported cohomology admits an action by a finite group generated by the reflections at the new vertices. They interpret the coefficients of the Kac polynomials as the dimensions of the sign isotypical component of this cohomology by ma\-king use of an arithmetic Fourier transform. Furthermore, they give similar cohomological interpretations of the refined Donaldson--Thomas invariants of quivers.

This work on quiver representations inspired Schiffmann \cite{schiffmann} to formulate and prove an analogous statement for bundles in the coprime setting, which lead to formulae for the Betti numbers of moduli spaces of Higgs bundles and eventually gave a proof of the conjectures of Hausel and Rodriguez-Villegas \cite{HRV} on these Betti numbers.

In this section, we focus of the proof of this result in the quiver setting following the arguments of Crawley-Boevey and Van den Bergh. After this proof, we discuss the parallel argument in the bundle setting.

\subsubsection*{The statement in the quiver setting}

Let $Q$ be a quiver and $d$ be a dimension vector. Motivated by questions in representation theory of quiver representations, Kac studied the properties of the count of absolutely indecomposable quiver representations over finite fields \cite{Kac1,Kac2}.

\begin{defn}
Let $q$ be a prime power. Then a quiver representation $W$ over $\FF_q$ is absolutely indecomposable if $W \otimes_{\FF_q} \overline{\FF_q}$ is an indecomposable quiver representation. Let $\cA_{Q,d}(q)$ denote the number of isomorphism classes of absolutely indecomposable representations of $Q$ over $\FF_q$ with dimension vector $d$.
\end{defn}

\begin{exer} For an $\FF_q$-representation $W$ of $Q$ of dimension $d$ prove the following.
\begin{enumerate}\itemsep=0pt
\item[a)] If $W$ is absolutely indecomposable, then $W$ is indecomposable.
\item[b)] The converse holds if $d$ is an indivisible dimension vector.
\end{enumerate}
\end{exer}

Kac proved that $\cA_{Q,d}(q)$ is a polynomial in $q$ with integer coefficients, and conjectured that the coefficients are natural numbers (see Section~\ref{sec Kac results} below).
In order to formulate the result required for the proof of this conjecture for quivers without loops and indivisible dimension vectors given Crawley-Boevey and Van den Bergh \cite{CBVdB}, we recall that there is an algebraic moment map $\mu\colon \RepQbar \ra \LieGL_d$ for the $\GL_d$-action on the space of representations of the doubled quiver over any field $k$. The zero level set of the moment map defines relations~$\cR_0 $ on the doubled quiver~$\overline{Q}$ such that $\mu^{-1}(0) = \rep_d\big(\ov{Q},\cR_0\big)$ is the space of representations of the preprojective algebra. Choose a generic stability parameter $\theta$ with respect to~$d$; then $\theta$-semistability and $\theta$-stability (and $\theta$-geometric stability) coincide for $d$-dimensional $k$-representations of $Q$ (and also for the double quiver $\overline{Q}$). The associated algebraic symplectic reduction
\begin{gather*} X_0:= \mu^{-1}(0)/\!/_{\!\chi_\theta} \G = \cM_d^{\theta\text{-ss}}\big(\ov{Q},\cR_0\big) \end{gather*}
is a moduli space of $\theta$-stable $d$-dimensional representations of $\big(\ov{Q},\cR_0\big)$. Moreover, as semistability coincides with stability and all stable representations are simple, $X_0$ is a smooth algebraic variety which inherits an algebraic symplectic structure from $\RepQbar$. If $k = \CC$, then $X_0$ is a (non-compact) hyperk\"{a}hler manifold such that $T^*\Mod \subset X_0$.

\begin{Theorem}[Crawley-Boevey and Van den Bergh \cite{CBVdB}]\label{main Theorem}
Let $Q$ be a quiver without loops and~$d$ be an indivisible dimension vector. For a generic stability parameter~$\theta$ with respect to~$d$ and for a finite field $\FF_q$ of sufficiently large prime characteristic, we have
\begin{gather*} \cA_{Q,d}(q) = \sum_{i=0}^e \dim H^{2e - 2i}(X_0(\CC),\CC)q^i, \end{gather*}
where $e = \frac{1}{2} \dim X_0 = \dim \Mod$. In particular, $\cA_{Q,d}(q)$ is a polynomial in $q$ with coefficients in~$\NN$.
\end{Theorem}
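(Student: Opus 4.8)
The plan is to reduce the statement to the arithmetic identity $\#X_0(\FF_q) = q^{e}\,\mathcal{A}_{Q,d}(q)$, from which both the displayed formula and the positivity of the coefficients follow at once. The geometric input is that $X_0$ is \emph{semiprojective}: by Lemma~\ref{lemma stab trivial} and Remark~\ref{rmk HK analogue} it is smooth (since $\theta$ is generic and $d$ indivisible, $\theta$-semistability, $\theta$-stability and simplicity coincide on $\mu^{-1}(0)$ and $\G$ acts freely on the stable locus), it carries the $\Gm$-action rescaling the $A^{*}$-coordinates, the induced morphism $X_0 \to \spec \cO(\mu^{-1}(0))^{\GL_d}$ is projective, and $\Gm$ contracts the affine target to the cone point, so every $\Gm$-orbit has a limit and the fixed locus is proper. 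A $\Gm$-localisation argument together with weight estimates then shows $H^{*}(X_0(\CC),\CC)$ is concentrated in even degrees $\le 2e$ and is pure of Tate type in each degree. Spreading $X_0$ out over $\ZZ[1/N]$ and combining the Grothendieck--Lefschetz trace formula with Poincar\'e--Lefschetz duality ($H^{2i}_c(X_0)\cong H^{4e-2i}(X_0)^{\vee}$) gives, for every prime power $q$ of sufficiently large characteristic,
\begin{gather*}
\#X_0(\FF_q) = \sum_{i} \dim H^{4e-2i}(X_0(\CC),\CC)\, q^{i} = q^{e}\sum_{i=0}^{e} \dim H^{2e-2i}(X_0(\CC),\CC)\, q^{i},
\end{gather*}
so it remains to identify the left-hand side with $q^{e}\mathcal{A}_{Q,d}(q)$.

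For the point count I would proceed in four steps. (1) Since $\G$ acts freely on the $\theta$-stable locus of $\mu^{-1}(0)$ and $\GL_d$ is a special group, $\#X_0(\FF_q)$ equals the number of $\theta$-stable $\FF_q$-points of $\mu^{-1}(0)$ divided by $\#\G(\FF_q)$. (2) An arithmetic Fourier transform over $\FF_q$ evaluates the count of the whole fibre: for fixed $X$ the map $X^{*}\mapsto \mu(X,X^{*})$ is linear with rank governed by the four-term exact sequence of Exercise~\ref{exer hom and exts}, so Fourier inversion together with \eqref{eq euler dim} gives
\begin{gather*}
\#\mu^{-1}(0)(\FF_q) = q^{-\langle d,d\rangle_Q}\sum_{X\in\Rep(\FF_q)}\#\End_Q(X)(\FF_q),
\end{gather*}
and the analogous identity holds for every sub-dimension-vector. (3) Hua's formula expresses the generating series of the quantities $\sum_{X}\#\End_Q(X)(\FF_q)/\#\GL_d(\FF_q)$ as a plethystic exponential of the series of Kac polynomials $\mathcal{A}_{Q,d'}(q)$, which are polynomials in $q$ by Kac's theorem; feeding this into step (2) yields a closed plethystic formula for the automorphism-weighted count of representations of the preprojective algebra. (4) Because $d$ is indivisible and $\theta$ generic, every $\theta$-semistable $d$-dimensional representation lying in $\mu^{-1}(0)$ is stable, hence simple with automorphism group exactly $\Delta$, so the passage from the weighted count to the honest count of $\theta$-stable points is the clean plethystic logarithm with no stacky corrections; extracting the coefficient of the monomial in degree $d$, and dividing by $\#\G(\FF_q)$ as in step (1), leaves precisely $\#X_0(\FF_q) = q^{e}\mathcal{A}_{Q,d}(q)$.

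The main obstacle is step (4): reconciling the groupoid-theoretic count of all preprojective representations, weighted by the sizes of their automorphism groups, with the honest point count of the coarse space $X_0$, and verifying that indivisibility of $d$ is exactly what annihilates the contributions of the strictly semistable and higher-automorphism strata, so that the plethystic inversion returns a single Kac polynomial rather than a sum of products of them. A second, more technical point is the ``sufficiently large characteristic'' hypothesis: one must spread $\mu^{-1}(0)$, the freeness and smoothness statements, and the comparison between the $\CC$-Betti numbers of $X_0$ and its $\FF_q$-point counts out to a dense open subset of $\spec\ZZ$, which amounts to flatness with geometrically reduced, smooth fibres of the relevant moment-map schemes.
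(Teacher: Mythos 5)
Your overall frame (semiprojectivity and purity of $X_0$, Grothendieck--Lefschetz plus Poincar\'e duality, spreading out over $\ZZ[1/N]$) matches the paper, but the arithmetic core is genuinely different from the paper's and, as written, has a gap exactly at your step (4). Your step (2) counts the \emph{whole} fibre $\mu^{-1}(0)(\FF_q)$, whereas step (1) needs only the $\theta$-stable points of $\mu^{-1}(0)$. Unlike the deformed fibre $\mu^{-1}(\theta)$, where for $p\gg 0$ every point is $\theta$-stable (Lemma~\ref{lemma stab on theta level set}), the zero fibre has a large unstable locus: every representation of $Q$, decomposable or not, lifts to $\mu^{-1}(0)$ (set all starred maps to zero), so the fibrewise linear-algebra count over $\Rep(\FF_q)$ sees \emph{all} representations, not just the indecomposables. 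The genericity of $\theta$ and indivisibility of $d$ only tell you that semistable $=$ stable and that stable points have automorphism group $\Delta$; they do nothing to remove the unstable contribution. Consequently the plethystic inversion of your Hua-type identity does not return the single polynomial $\cA_{Q,d}(q)$ for the stable locus: extracting the degree-$d$ coefficient of the stack count of $\mu^{-1}(0)$ yields a sum of products of Kac polynomials over sub-dimension vectors, and isolating the stable part requires a Harder--Narasimhan-type recursion on the preprojective side (this is essentially the later, and substantially harder, cohomological-integrality circle of ideas of Mozgovoy and Davison), which your sketch does not supply. The sentence ``no stacky corrections because stable points are simple'' addresses only the automorphisms of stable points, not the strata you need to discard.

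The paper circumvents precisely this obstacle by never counting $\mu^{-1}(0)$ directly: it deforms to $X=\mu^{-1}(\theta)/\!/_{\chi_\theta}\G$, where Lemma~\ref{lemma stab trivial} makes every point stable in large characteristic, and where Crawley-Boevey's lifting theorem (Theorem~\ref{CB_geom_mmap}) identifies the image of $\pi\colon\mu^{-1}(\theta)\to\rep_d(Q)$ with the indecomposable representations and the fibres with $\Ext^1_Q(W,W)^*$ --- this is where genericity of $\theta$ actually produces indecomposability, since a direct summand $W_1$ would force $\theta\cdot\dim W_1=0$. Burnside's formula then gives $\cA_{Q,d}(q)=q^{-e}|X(\FF_q)|$ (Proposition~\ref{prop point count X}), and the point count is transported back to $X_0$ via the topological triviality of the family $\mathfrak{X}\to\AA^1$ (or Nakajima's Bia{\l}ynicki-Birula argument), Proposition~\ref{prop point counts agree}. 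To repair your route you would either have to import that deformation step, or prove the HN-recursion/integrality statement you are implicitly assuming; as a smaller point, your claim that a $\GG_m$-localisation argument alone gives evenness of $H^*(X_0)$ is also not justified (the fixed components need not have even cohomology a priori); the paper instead deduces the Betti numbers from purity together with the polynomial point count.
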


\subsubsection*{A summary of the strategy of the proof}

Let us first outline the main steps involved in the proof.

\textbf{Step 1:} \emph{Deforming the moment map fibre to produce a cohomologically trivial family.}
We will construct a family $\mathfrak{X} \ra \AA^1$ over any field $k$ whose special fibre over $0$ is $X_0$ and whose general fibre is isomorphic to $X := \cM^{\theta\text{-ss}}_d\big(\ov{Q},\cR_\theta\big)= \mu^{-1}(\theta)/\!/_{\chi_\theta} \G$ by taking $\mathfrak{X}:= \mu^{-1}(L)/\!/_{\!\chi_\theta} \GL_d$ for the line $L \subset \LieG \subset \LieGL_d$ joining $0$ and $\theta$. Working over $k = \CC$, we use the hyperk\"{a}hler structure on $\RepQbar$ to show that this family is topologically trivial (and so the singular cohomology of~$X_0$ and~$X$ are isomorphic). From this we will deduce that~$X$ and~$X_0$ have the same point count over a finite field of sufficiently large characteristic (see Step~6).

\textbf{Step 2:} \emph{Purity of the special fibre $X_0$ via the scaling action.}
We show that the natural dilation action on $\RepQbar$ given by scaling the morphisms over each arrow induces a $\GG_m$-action on $X_0$ that is semi-projective; that is, the fixed locus $(X_0)^{\GG_m}$ is projective and the limit of all points in $X_0$ under the action of $t \in \GG_m$ as $t \ra 0$ exists. Consequently, one can construct a~Bia{\l}ynicki-Birula decomposition of $X_0$ which gives rise to a description of the cohomology (and other algebro-geometric invariants) of $X_0$ in terms of its $\GG_m$-fixed locus, which is smooth and projective. In particular, this enables us to deduce that $X_0$ is cohomologically pure in Step~3.

\textbf{Step 3:} \emph{Purity and point counting over finite fields.}
In this step, we explain how the Poincar\'{e} polynomial of $X$ and $X_0$ can be computed by counting points over finite fields. The Weil conjectures and comparison theorems between singular and $\ell$-adic cohomology, enable one to calculate the Betti numbers of a smooth projective complex variety $Y$ with good reduction~$Z$ mod $p$ by counting the $\FF_q$-points of $Z$, where $q$ is a power of $p$. Unfortunately, $X$ and $X_0$ are not projective; however, we explain that the same conclusions still hold for a smooth variety~$Z$ over $\FF_q$ if $Z$ is pure and has polynomial point count (that is, $|Z(\FF_{q^r})|$ is a polynomial in $q^r$). The plan is to apply this to $X_0$, which is smooth and pure by Step~2. In the next two steps, we will show that $X$ has polynomial point count over finite fields of sufficiently large characteristic.

\textbf{Step 4:} \emph{Point counting for the general fibre $X$ and absolutely indecomposable representations.}
The goal of this step is to relate the $\FF_q$-point count $|X(\FF_q)|$, which is the number of isomorphism classes of $\theta$-stable $d$-dimensional $\FF_q$-representations of $\big(\overline{Q},\cR_\theta\big)$, with the number $\cA_{Q,d}(q)$ of absolutely indecomposable $d$-dimensional $\FF_q$-representations of $Q$, where $q$ is a power of a sufficiently large prime $p$. More precisely, we will show that for $\FF_q$ of large characteristic
\begin{gather*} \cA_{Q,d}(q) = q^{-e} |X(\FF_q)|, \end{gather*}
where $e := \frac{1}{2} \dim X$.

For $p$ sufficiently large, we will show that all points in $\mu^{-1}(\theta)$ are $\theta$-stable and the relationship between these two counts follows from work of Crawley-Boevey \cite{CBmoment} studying the lifting of $Q$-representations to $\big(\ov{Q},\cR_\theta\big)$-representation under the restriction of the projection $\rep_d\big(\overline{Q}\big) \ra \rep_d(Q)$ to the level set $\mu^{-1}(\theta)= \rep_d\big(\ov{Q},\cR_\theta\big)$. More precisely, Crawley-Boevey proves that the image on $\FF_q$-points of $\pi\colon \mu^{-1}(\theta) \ra \rep_d(Q)$ is the set of indecomposable $d$-dimensional $\FF_q$-representations of $Q$ and also describes the fibres using self-extension groups of quiver representations.

\textbf{Step 5:} \emph{Kac's theorem on absolutely indecomposable quiver representations.}
In this step, we survey Kac's work on absolutely indecomposable quiver representations over finite fields. The starting point for this work is a beautiful theorem of Gabriel, which describes the indecomposable complex representations of a quiver whose underlying graph is a Dynkin diagram in terms of the positive roots of the Lie algebra associated to this Dynkin diagram. Kac generalised this work to arbitrary quivers by associating to such a quiver $Q$ (or strictly speaking its underlying graph) a root system $\Delta_Q \subset \ZZ^V$ (for a quiver without loops, this is the root system of an associated Kac--Moody Lie algebra $\fg_Q$). More precisely, he shows that absolutely indecomposable quiver representations of dimension $d$ exists over a finite field precisely when $d$ is a positive root of $\Delta_Q$ and proves that the count $\cA_{Q,d}(q)$ is polynomial in $q$ with integer coefficients. One of Kac's conjectures on $\cA_{Q,d}(q)$ was the non-negativity of the coefficients; the proof of this conjecture follows from \cite{CBVdB,HLRV} as we see in the final step.

\textbf{Step 6:} \emph{Specialisation and relating the cohomology of the special fibre and general fibre.}
Finally we relate various cohomology groups associated to $X$ and $X_0$ in order to prove the main result. In order to pass between the GIT quotients over the field of complex numbers and various finite fields, we first state a result concerning GIT over the integers and base change. Since the varieties $\RepQbar$ and $\GL_d$, as well as the moment map $\mu$, are defined over the integers, the family $\mathfrak{X} \ra \AA^1$ is also defined over the integers. The key result we need is that over an open subset of $\spec \ZZ$ the construction of these GIT quotients commutes with base change and the family $\mathfrak{X} \ra \AA^1$ is smooth.

Using the (topological) triviality of the family $\mathfrak{X} \ra \AA^1$ over $\CC$ and the comparison theorem together with Deligne's base change result for direct images, we obtain isomorphisms between the compactly supported $\ell$-adic cohomology of the base changes of $X_0$ and $X$ to $\ov{\FF_p}$ for $p \gg 0$. By the Grothendieck--Lefschetz trace formula, we deduce that for a finite field $\FF_q$ of sufficiently large characteristic $p$, the point counts of $X$ and $X_0$ coincide
\begin{gather*} |X_0(\FF_q) | = | X(\FF_q)|.\end{gather*}
There is a more direct proof of this equality due to Najakima which utilises the Bia{\l}ynicki-Birula decompositions on $\mathfrak{X}$ and appears an appendix in \cite{CBVdB}; however, we have chosen to present the original proof of Crawley-Boevey and Van den Bergh in Step 1, as it utilises the hyperk\"{a}hler structure in a rather ingenious way.

Over a finite field $\FF_q$ of characteristic $p \gg 0$, the $\FF_q$-variety $X_0$ is pure and smooth and has polynomial point count equal to $q^e\cA_{Q,d}(q)$; hence, this polynomial is the $\ell$-adic Poincar\'{e} polynomial of $X_0 \times_{\FF_q} {\overline{\FF_p}}$ for $p \gg0$ and $\ell \neq p$. Since $X_0$ is the mod $q$ reduction of the complex variety $X_{0,\CC}$, we then deduce Theorem \ref{main Theorem} from the comparison theorem and Poincar\'{e} duality.

\subsection{Deforming the moment map to produce a cohomologically trivial family}

As the stability parameter $\theta$ satisfies $\theta \cdot d = 0$, it determines a central element $(\theta I_{d_v})_{v \in V} \in \LieG$, which we also denote by $\theta$. Let $L=k\theta \subset \LieG$ denote the line joining $\theta$ and $0$. Then we consider the fibres of the moment map over points in $L$; let
\begin{gather*} \mathfrak{X}:= \mu^{-1}(L)/\!/_{\chi_\theta} \G, \end{gather*}
which we naturally view as a family over $L \cong \AA^1$. The special fibre of $\mathfrak{X}$ over $ 0 \in \AA^1$ is precisely the variety $X_0$ considered above and the general fibre of $\mathfrak{X}$ over a non-zero point in $\AA^1$ is isomorphic to the variety
\begin{gather*} X:=\cM^{\theta\text{-ss}}_d\big(\ov{Q},\cR_\theta\big) = \mu^{-1}(\theta)/\!/_{\chi_\theta} \G. \end{gather*}
We note that we can construct the family $\mathfrak{X} \ra \AA^1$ over any field $k$ and also over $\spec \ZZ$, as the varieties $\RepQbar$ and $\GL_d$ and the morphism $\mu$ are all defined over the integers.

\begin{prop}[{\cite[Lemma 2.3.3]{CBVdB}}]Over $k = \CC$, the family $\mathfrak{X} \ra \AA^1$ is topologically trivial.
\end{prop}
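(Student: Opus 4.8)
The plan is to exploit the hyperk\"{a}hler structure on $\rep_d(\ov Q)\cong T^*\rep_d(Q)$ together with the Kempf--Ness theorem of Section~\ref{sec HK quiver} to produce an explicit smooth trivialisation of $\mathfrak{X}\to\AA^1$. First I would rephrase the family in terms of hyperk\"{a}hler reductions. Since $\theta$ is central, the line $L=\CC\theta\subset\LieG$ is fixed by the coadjoint action, so $\mu_\CC^{-1}(L)$ is a closed $\GL_d$-invariant affine subscheme of $\rep_d(\ov Q)$; on its $\chi_\theta$-semistable locus semistability agrees with stability and the $\U_d$-action is free (genericity of $\theta$, as in Lemma~\ref{lemma stab trivial}), this locus is smooth, and $\mu_\RR$ restricts to a moment map for $\U_d$ there. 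A relative version of the Kempf--Ness homeomorphism then identifies, compatibly with the projections to $\AA^1\cong L=\{t\theta\}$,
\begin{gather*} \mathfrak{X}=\mu_\CC^{-1}(L)/\!/_{\chi_\theta}\GL_d\;\simeq\;\big(\mu_\RR^{-1}(\theta)\cap\mu_\CC^{-1}(L)\big)/\U_d, \end{gather*}
so the fibre over $t$ is the hyperk\"{a}hler reduction $\mu_{\rm HK}^{-1}(\theta,t\theta)/\U_d$. Writing the hyperk\"{a}hler moment map as a triple of $\fu_d^*$-valued maps $(\mu_{\rm I},\mu_{\rm J},\mu_{\rm K})=(\mu_\RR,\operatorname{Re}\mu_\CC,\operatorname{Im}\mu_\CC)$, this fibre is the reduction at the value $(\theta,\operatorname{Re}(t)\,\theta,\operatorname{Im}(t)\,\theta)$, a triple lying in the $3$-dimensional subspace $\{(a\theta,b\theta,c\theta):a,b,c\in\RR\}\subset\fu_d^{*}\oplus\fu_d^{*}\oplus\fu_d^{*}$, on which the hyperk\"{a}hler rotations act through $SO(3)$.

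Next I would move this value onto the real axis. On the flat hyperk\"{a}hler manifold $T^*\rep_d(Q)\cong\HH^N$ the hyperk\"{a}hler rotations (right multiplication by unit quaternions, which permute the two-sphere of complex structures) and the real dilations $D_\lambda\colon X\mapsto\lambda X$ are both $\U_d$-equivariant; a rotation transforms the triple $(\mu_{\rm I},\mu_{\rm J},\mu_{\rm K})$ by the corresponding element of $SO(3)$, while $D_\lambda$ (for $\lambda>0$) scales it by $\lambda^2$. Since the vector $(1,\operatorname{Re}t,\operatorname{Im}t)$ always has positive first coordinate, after dilating by $\lambda_t=(1+|t|^2)^{1/4}$ and then rotating by a suitable unit quaternion $u_t$ one carries the value $(\theta,0,0)$ to $(\theta,\operatorname{Re}(t)\,\theta,\operatorname{Im}(t)\,\theta)$; and because the unit directions that occur sweep out only the contractible open hemisphere $\{x_1>0\}\subset S^2$, the rotations $u_t$---hence the composite diffeomorphisms $\phi_t:=R_{u_t}\circ D_{\lambda_t}$ of $T^*\rep_d(Q)$---can be chosen to depend smoothly on $t$. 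Each $\phi_t$ is $\U_d$-equivariant and maps $\mu_\RR^{-1}(\theta)\cap\mu_\CC^{-1}(0)$ diffeomorphically onto $\mu_\RR^{-1}(\theta)\cap\mu_\CC^{-1}(t\theta)$, so $([X],t)\mapsto([\phi_t(X)],t)$ descends to a homeomorphism $X_0\times\AA^1\to\mathfrak{X}$ over $\AA^1$, which is the desired trivialisation.

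I expect the real work to lie in the first step rather than in the rotation. One must check that Kempf--Ness applies to the possibly singular affine scheme $\mu_\CC^{-1}(L)$---that $\mu_\RR$ restricts there to a genuine moment map and that the resulting homeomorphism is one of families over $\AA^1$---and, crucially, that genericity of $\theta$ (with $k$ of characteristic $0$, or $p\gg0$ once one later reduces mod $p$) forces the $\U_d$-action on all of $\mu_\RR^{-1}(\theta)\cap\mu_\CC^{-1}(L)$ to be free, so that every quotient in sight is a smooth manifold and each $\phi_t$ descends to a diffeomorphism. Granting these points, the only essentially topological ingredient is the contractibility of the open hemisphere, which is what permits the family $(\phi_t)$ to be globalised over $\AA^1$.
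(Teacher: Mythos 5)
Your argument is correct and is essentially the paper's proof: Crawley-Boevey--Van den Bergh (as presented here) also use the $\U_d$-equivariance of the hyperk\"ahler moment map under the $\HH^*$-action, produce a continuous section of the transitive action on $\operatorname{Im}(\HH)\setminus\{0\}$ over a contractible subset (your explicit dilation $D_{\lambda_t}$ composed with a hemisphere of rotations $u_t$ is exactly such a section over $\{{\rm i}+{\rm j}\CC\}$), and then invoke the affine Kempf--Ness theorem to transfer the resulting $\U_d$-equivariant trivialisation of the level sets to a homeomorphism $X_0\times\CC\cong\mathfrak{X}$. Note that only topological (not smooth) triviality is claimed, so your worries about freeness of the $\U_d$-action on the whole level set are not needed.
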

\begin{proof}We recall that $\RepQbar$ is hyperk\"{a}hler and so it has a 2-sphere of K\"{a}hler structures, as the multiplicative group $\HH^*$ acts (by right multiplication) on $\RepQbar$; this permutes the complex structures and the subgroup $\SUl(2) \cong \{ \beta \in \HH \colon \beta \beta^\dagger = 1 \}$ acts isometrically with respect to the hyperk\"{a}hler metric. Let us write the hyperk\"{a}hler moment map for the action of the maximal compact subgroup $\U_d < \GL_d$ as a map
\begin{gather*} \mu_{\rm HK} \colon \ \RepQbar \ra \operatorname{Im}(\HH) \otimes_{\RR} \fu_d^*, \qquad X \mapsto {\rm i} \otimes \mu_{\rm I}(X) + {\rm j} \otimes \mu_{\rm J}(X) + {\rm k} \otimes \mu_{\rm K}(X),\end{gather*}
where $\mu_I = \mu_{\RR}$ and $\mu_{\rm J} + {\rm i} \mu_{\rm K} = \mu_{\CC} = \mu$. For the $\HH^*$-action on $\operatorname{Im} (\HH)$ given by $\beta \cdot \alpha = \beta \alpha \beta^\dagger$, the hyperk\"{a}hler moment map is $\HH^*$-equivariant: for $\beta \in \HH^*$ and $X \in \RepQbar$, we have
\begin{gather*} \mu_{\rm HK}(X \cdot \beta) = \beta \mu_{\rm HK}(X) \beta^\dagger. \end{gather*}

We will use the transitivity of the $\HH^*$-action on $\operatorname{Im} (\HH)^\circ := \operatorname{Im} (\HH)- \{ 0 \}$ to construct a~tri\-vialisation $X_0 \times \CC \cong \fX$. Since this action is transitive, for fixed $\alpha \in \operatorname{Im} (\HH)^\circ$ the action map $(-)\cdot\alpha\colon \HH^* \ra\operatorname{Im} (\HH)^\circ$ admits a continuous section $s\colon C \ra \HH^*$ over any contractible subset $C \subset \operatorname{Im} (\HH)^\circ$ containing $\alpha$. For any coadjoint fixed point $\theta \in \fu_d^*$, we obtain a local continuous trivialisation of the hyperk\"{a}hler moment map
\begin{gather*} \mu_{\rm HK}^{-1}(\alpha \otimes \theta) \times C \cong \mu^{-1}_{\rm HK}(C \otimes \theta), \qquad (X,c)\mapsto X \cdot s(c), \end{gather*}
which is $\U_d$-equivariant and so gives rise to a continuous isomorphism
\begin{gather*} \mu^{-1}_{\rm HK}(\alpha \otimes \theta) / \U_d \times C \cong \mu^{-1}_{\rm HK}(C \otimes \theta)/\U_d. \end{gather*}

We apply this to $\alpha = {\rm i} \in C=\{ {\rm i} + {\rm j} \CC \} \subset \operatorname{Im} (\HH)^\circ$. Then $\mu^{-1}_{\rm HK}(\alpha \otimes \theta) \cong \mu_{\RR}^{-1}(\theta) \cap \mu_{\CC}^{-1}(0)$ and
\begin{gather*} \mu^{-1}_{\rm HK}(C \otimes \theta) = \mu^{-1}_{\rm HK}(({\rm i} + {\rm j} \CC)\otimes \theta) = \mu_{\RR}^{-1}(\theta) \cap \mu^{-1}_{\CC}(\CC \theta) = \mu^{-1}_{\RR}(\theta) \cap \mu_{\CC}^{-1}(L)\end{gather*}
and so we obtain a continuous trivialisation over $C \cong \CC$
\begin{gather*} \big(\mu_{\RR}^{-1}(\theta) \cap \mu_{\CC}^{-1}(0)\big)/\U_d \times \CC \cong \big(\mu_{\RR}^{-1}(\theta) \cap \mu^{-1}_{\CC}(L)\big)/\U_d. \end{gather*}
By the Kempf--Ness theorem this gives a homeomorphism
\begin{gather*} X_0 \times \CC = \mu_{\CC}^{-1}(0)/\!/_{\chi_\theta} \GL_d \times \CC \cong \mu_{\CC}^{-1}(L) /\!/_{\chi_\theta} \GL_d = \mathfrak{X}, \end{gather*}
which proves that the family $\mathfrak{X} \ra \CC$ is topologically trivial.
\end{proof}

We will apply this result to deduce that over a finite field $\FF_q$ of sufficiently large prime characteristic the $\FF_q$-varieties $X_0$ and $X$ have the same point count; an algebraic proof is also given by Nakajima in~\cite{CBVdB}.

\subsection[Purity of the special fibre $X_0$ via the scaling action]{Purity of the special fibre $\boldsymbol{X_0}$ via the scaling action}\label{sec Step 2}

In this section, we consider the GIT quotient $X_0$ over a field $k$. We recall that $X_0 \!:=\! \mu^{-1}(0)/\!/_{\chi_\theta} \G$ is projective over the affine variety $\Aff(X_0):=\mu^{-1}(0)/\!/ \GL_d$, which is equal to the spectrum of the ring of $\GL_d$-invariants on $\mu^{-1}(0) = \rep_d\big(\ov{Q},\cR_0\big)$. Thus we have a commutative diagram
\begin{gather*}
\xymatrix{\mu^{-1}(0)^{\theta\text{-ss}} \ar[d]^{} \ar@{^{(}->}[r] &\mu^{-1}(0)\ar[d]^{\pi}\\
X_0 \ar[r]^{p} & \Aff(X_0),}
\end{gather*}
where the map $p$ is projective and the map $\pi$ denotes the affine GIT quotient. Since $\theta$ is generic with respect to $d$, the $k$-variety $X_0$ is smooth (as in the proof of Lemma~\ref{lem smooth large char} below).

There is a dilating $\GG_m$-action on $\RepQbar$ given by scalar multiplication on the matrices over all arrows with a unique fixed point corresponding to the origin. Moreover, the limit of every point in $\RepQbar$ under the action of $ t \in \GG_m$ as $t \ra 0$ exists and is equal to the origin. Hence, this is a semi-projective $\GG_m$-action in the sense of the following terminology introduced in~\cite{HRV_semiproj}.

\begin{defn}A $\GG_m$-action on a smooth quasi-projective variety $Z$ is \emph{semi-projective} if $Z^{\GG_m}$ is projective and for all $z \in Z$ the limit $\lim\limits_{t \ra 0} t \cdot z$ exists in $Z$.
\end{defn}

Here by this limit existing, we mean that the map $\GG_m \ra Z$ given by $ t \mapsto t \cdot z$ extends to a~morphism $\AA^1 \ra Z$ (such an extension is unique if it exists, as $Z$ is separated).

\begin{ex}The moduli space of semistable Higgs bundles of coprime rank and degree over a smooth projective algebraic curve has a semi-projective $\GG_m$-action given by scaling the Higgs field~\cite{simpson_HiggsGm}.
\end{ex}

The key feature of semi-projective $\GG_m$-actions is that they give rise to a Bia{\l}ynicki-Birula decomposition \cite{BB} of $Z$, which gives a description of the cohomology (and other invariants, such as the Chow groups and motive) of $Z$ in terms of that of its fixed locus. Since the fixed locus is smooth and projective, we will deduce that $Z$ is (cohomologically) pure in Section~\ref{sec purity point count}.

The scaling $\GG_m$-action on $\RepQbar$ commutes with the $\GL_d$-action and the algebraic moment map is $\GG_m$-equivariant with respect to this action and the $\GG_m$-action on $\fg\fl_d$ of weight $2$. Hence, there is an induced $\GG_m$-action on $\mu^{-1}(0)$ and its GIT quotients $X_0$ and $\Aff(X_0)$ such that the map $p\colon X_0 \ra \Aff(X_0)$ is $\GG_m$-equivariant. We can then prove that this $\GG_m$-action on $X_0$ is semiprojective as in \cite{HRV_semiproj}.

\begin{prop}\label{prop semiproj Gm X0}This scaling action of $\GG_m$ on $X_0$ is semi-projective.
\end{prop}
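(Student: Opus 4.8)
The plan is to verify the two conditions in the definition of a semi-projective action directly, exploiting the commutative square relating $X_0$ to its affinization together with the facts, already recorded above, that $p\colon X_0 \to \Aff(X_0)$ is projective and $\GG_m$-equivariant for the scaling action, that the action descends to $X_0$ and $\Aff(X_0)$, and that $X_0$ is smooth (genericity of $\theta$) and quasi-projective (being projective over an affine variety). Thus it remains to show that $X_0^{\GG_m}$ is projective and that $\lim_{t\to 0}t\cdot x$ exists in $X_0$ for every point $x$.

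First I would analyse the base $\Aff(X_0) = \spec R$ with $R := \cO(\mu^{-1}(0))^{\GL_d}$. The scaling action contracts every point of $\RepQbar$ to the origin as $t \to 0$, and $\mu^{-1}(0) = \rep_d(\ov{Q},\cR_0)$ is a closed $\GG_m$-stable subvariety, so the same holds on $\mu^{-1}(0)$; hence every $\GG_m$-invariant regular function on $\mu^{-1}(0)$ is constant. Consequently $R$ is a graded subring of the coordinate ring of $\mu^{-1}(0)$, itself a graded quotient of the polynomial ring $\cO(\RepQbar)$ on which the action is contracting, so all weights of $R$ have one sign and its weight-zero part is $R_0 = k$. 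Therefore $\Aff(X_0)$ is an affine cone whose unique $\GG_m$-fixed point is the vertex $0 = \spec(R/R_{+})$, and $\lim_{t\to 0}t\cdot a = 0$ for every point $a$ of $\Aff(X_0)$ (lift $a$ to $\mu^{-1}(0)$ and use $\GG_m$-equivariance of the affine quotient map, or read it off the grading).

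Given this, the fixed-locus condition is immediate: by $\GG_m$-equivariance of $p$ we have $X_0^{\GG_m} \subseteq p^{-1}\big(\Aff(X_0)^{\GG_m}\big) = p^{-1}(0)$, which is proper over $k$ since $p$ is projective; being a closed subscheme of the projective scheme $p^{-1}(0)$, the fixed locus $X_0^{\GG_m}$ is projective. For the limit condition, fix $x \in X_0$ and consider the orbit morphism $o_x\colon \GG_m \to X_0$, $t \mapsto t\cdot x$. By the previous paragraph $p\circ o_x$ extends to a morphism $\AA^1 \to \Aff(X_0)$ sending $0 \mapsto 0$. Applying the valuative criterion of properness to the proper morphism $p$ over the discrete valuation ring $\cO_{\AA^1,0}$ yields the unique lift of this extension, which shows that $o_x$ extends over $0 \in \AA^1$; equivalently, $\lim_{t\to 0}t\cdot x$ exists in $X_0$. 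This is precisely the argument of \cite{HRV_semiproj} and establishes that the scaling action on $X_0$ is semi-projective.

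I expect the only genuinely substantive point to be the vanishing $R_0 = k$, equivalently that $\Aff(X_0)$ is a cone contracting to a single point, since everything after that is a formal consequence of the properness of $p$; and that vanishing in turn reduces to the transparent fact that scaling all the matrices of a representation of $\ov{Q}$ by $t \to 0$ drives it to the zero representation.
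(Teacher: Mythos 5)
Your proof is correct and follows essentially the same route as the paper: first show the contracting behaviour on the affine quotient $\Aff(X_0)$ (unique fixed point at the vertex, grading of the invariant ring with weight-zero piece $k$), then use the projectivity and $\GG_m$-equivariance of $p\colon X_0 \ra \Aff(X_0)$ to get projectivity of the fixed locus and existence of limits. You merely make explicit two points the paper leaves implicit, namely the valuative criterion for lifting the limit along the proper map $p$ and the inclusion $X_0^{\GG_m} \subseteq p^{-1}(x_0)$ rather than equality.
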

\begin{proof}This argument is given in \cite{CBVdB} and in \cite{HRV_semiproj}. We first show that this statement holds for the affine variety $\Aff(X_0)$. Let $x_0:=\pi(0) \in \Aff(X_0)$ denote the image of the origin $0 \in \mu^{-1}(0)$ under the affine GIT quotient $\pi$. Then $x_0$ is fixed by the $\GG_m$-action as $\pi\colon \mu^{-1}(0) \ra \Aff(X_0)$ is $\GG_m$-equivariant. In fact, this is the only $\GG_m$-fixed point in $\Aff(X_0)$ and all other points $x \in \Aff(X_0)$ satisfy $\lim\limits_{t \ra 0} t \cdot x = x_0$, as the same statement holds for $\mu^{-1}(0)$ and thus the $\GG_m$-action on $\Aff(X_0) = \spec\cO\big( \mu^{-1}(0)\big)^{\Gl_d}$ induces a~grading on $\cO\big( \mu^{-1}(0)\big)^{\Gl_d}$ which is concentrated in non-positive degrees and with weight zero piece isomorphic to~$k$.

Since $p$ is projective and $\GG_m$-equivariant, the fixed locus $X_0^{\GG_m} = p^{-1}(x_0)$ is projective and the flow under the $\GG_m$-action as $t \ra 0$ exists for all points in $X_0$. Thus the $\GG_m$-action on~$X_0$ is semi-projective.
\end{proof}

Hence, there is an associated Bia{\l}ynicki-Birula decomposition \cite{BB} of $X_0$ and the flow $X_0 \ra p^{-1}(x_0)$ under this $\GG_m$-action defines a homotopy retract. In particular, the cohomology of~$X_0$ can be described in terms of the cohomology of the smooth projective variety $p^{-1}(x_0)$. By Proposition~\ref{prop semiproj Gm implies pure} below, we deduce that~$X_0$ is (cohomologically) pure.

\subsection{Purity and point counting over finite fields}\label{sec purity point count}

By the Weil conjectures and comparison theorems between singular and $\ell$-adic cohomology, the Betti numbers of a smooth projective complex variety $Y$, which is defined over a number field and has good reduction $Z$ modulo a~prime~$p$, can be calculated by counting points of $Z$ over $\FF_q$ where $q$ is a power of~$p$. In this section, we will explain a generalisation of the above statement to smooth pure varieties.

\begin{ex}Let us consider the point count of $\PP^n$. Over $\FF_q$, we have
\begin{gather*} | \PP^n(\FF_q) | =\frac{ q^{n+1} -1 }{q-1} = 1 + q + q^2 + \cdots + q^n \end{gather*}
and the coefficients are precisely the even Betti numbers of $\PP^n$.
\end{ex}

Let us start by recalling the properties of $\ell$-adic cohomology that we will need to define purity. Let $p$ be a prime number and $q$ be a power of $p$ and fix a prime $\ell \neq p$. For a $\FF_q$-variety $Z$, we write $\ov{Z}:= Z \times_{\FF_q} \ov{\FF_q}$ for the base change to the algebraic closure. The (compactly supported) $\ell$-adic cohomology groups of $\ov{Z}$
\begin{gather*} H^i_c\big(\ov{Z},\QQ_{\ell}\big) := \lim_{\la} H^i_{c,\et}\big(\ov{Z},\ZZ/l^r\ZZ\big) \otimes_{\ZZ_l} \QQ_l\end{gather*}
are finite-dimensional $\QQ_{\ell}$-vector spaces that have many of the properties of the usual (compactly supported) singular cohomology groups defined for varieties over $k \subset \CC$. Let $Z$ and $Y$ be $\FF_q$-varieties; then we have the following properties.
\begin{itemize}\itemsep=0pt
\item Functoriality: for proper morphisms $f\colon Z \ra Y$ we have $H^i_c\big(\ov{Y},\QQ_{\ell}\big) \ra H^i_c\big(\ov{Z},\QQ_{\ell}\big)$.
\item K\"{u}nneth isomorphisms: $H^i_c\big(\ov{Y \times Z},\QQ_{\ell}\big) \cong H^i_c\big(\ov{Y},\QQ_{\ell}\big) \otimes H^i_c\big(\ov{Z},\QQ_{\ell}\big)$.
\item Vanishing properties: $H^i_c\big(\ov{Z},\QQ_{\ell}\big) \neq 0$ only for $0 \leq i \leq 2 \dim Z$.
\item For a Zariski-locally trivial $\AA^n$-fibration $Y \ra Z$, we have $H^i_c\big(\ov{Y},\QQ_{\ell}\big) \cong H_c^{i-2n}\big(\ov{Z},\QQ_{\ell}\big) \otimes H^2_c\big(\AA^1, \QQ_l\big)^{\otimes n}$.
\item Gysin long exact sequences for closed subvarieties $Z \subset Y$ with $U:= Y - Z$
\begin{gather*} \cdots \ra H^i_c\big(\ov{U}, \QQ_{\ell}\big) \ra H^i_c\big(\ov{Y}, \QQ_{\ell}\big) \ra H^i_c\big(\ov{Z}, \QQ_{\ell}\big) \ra H^{i+1}_c\big(\ov{U}, \QQ_{\ell}\big) \ra \cdots. \end{gather*}
\item Poincar\'{e} duality for smooth $\FF_q$-varieties.
\end{itemize}

For a indepth treatement of \'{e}tale cohomology and the Weil conjectures, see the book of Milne~\cite{MilneEtale}.

Let $\FF_q$ be a finite field of positive characteristic $p$. For a $\FF_q$-variety $Z$, we let $\Fr_{Z}\colon \ov{Z} \ra \ov{Z}$ denote the relative Frobenius. The fixed points of the relative Frobenius on $\ov{Z}$ are precisely the set of $\FF_q$-points in $Z$ and similarly the fixed points of $\Fr_{Z}^n$ are $Z(\FF_{q^n})$. In fact, the number of such points can be computed using the induced Frobenius action on $H^i_c\big(\ov{Z},\QQ_{\ell}\big)$.

\begin{Theorem}[the Grothendieck--Lefschetz trace formula] Let $Z$ be a smooth variety over a~finite field $\FF_q$ of characteristic $p >0$. Then for $l \neq p$, we have
\begin{gather*} |Z(\FF_{q^n})| = \sum_{i=0}^{2 \dim Z} (-1)^i \tr\big(\Fr_{Z}^n \colon H^i_c\big(\ov{Z},\QQ_l\big)\big). \end{gather*}
\end{Theorem}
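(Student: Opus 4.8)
The plan is to deduce this from the Lefschetz fixed point formula in $\ell$-adic cohomology, applied to the correspondence on $\ov{Z}$ given by the graph of the $n$-th power of Frobenius. First I would observe that both sides of the claimed identity are \emph{additive} along stratifications: for a closed subvariety $Z' \subset Z$ with open complement $U$ one has $|Z(\FF_{q^n})| = |U(\FF_{q^n})| + |Z'(\FF_{q^n})|$ on the point-counting side, while the Gysin long exact sequence relating $H^\bullet_c(\ov{U},\QQ_\ell)$, $H^\bullet_c(\ov{Z},\QQ_\ell)$ and $H^\bullet_c(\ov{Z'},\QQ_\ell)$ listed above is $\Fr$-equivariant, so the alternating trace $\sum_i (-1)^i \tr(\Fr_Z^n \mid H^i_c(\ov{Z},\QQ_\ell))$ is additive too. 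By Noetherian induction on $\dim Z$ --- using that every $\FF_q$-variety contains a dense open smooth quasi-projective subvariety whose complement has strictly smaller dimension, and embedding any smooth quasi-projective variety into a projective closure --- this reduces the whole statement to the case of a smooth projective $Z$, so in particular the hypothesis that $Z$ be smooth costs us nothing.

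For $Z$ smooth projective of dimension $m$ I would then compute the intersection number of the graph $\Gamma := \Gamma_{\Fr_Z^n} \subset \ov{Z} \times_{\FF_q} \ov{Z}$ with the diagonal $\Delta$ in two ways. Cohomologically, pushing the cup product $[\Gamma] \cup [\Delta]$ forward to a point and expanding $[\Delta]$ via its K\"{u}nneth decomposition together with Poincar\'{e} duality (the last listed property of $\ell$-adic cohomology) identifies this number with $\sum_i (-1)^i \tr(\Fr_Z^n \mid H^i(\ov{Z},\QQ_\ell))$, and for smooth projective $Z$ compactly supported and ordinary cohomology coincide. Geometrically, $\Gamma \cap \Delta$ is the fixed locus of $\Fr_Z^n$ on $\ov{Z}$, which is exactly $Z(\FF_{q^n})$, and this intersection is transversal: the derivative of the Frobenius morphism vanishes identically (Frobenius annihilates the sheaf of differentials), so at a fixed point the tangent space to $\Gamma$ is the graph of the zero endomorphism and meets that of $\Delta$ only at the origin. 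Hence every fixed point has local multiplicity $1$, the geometric count is $|Z(\FF_{q^n})|$, and equating the two computations proves the theorem for smooth projective $Z$; the d\'{e}vissage of the first step then yields the general statement.

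The hard part will be the transversality and local-term analysis: one must justify that all the local intersection multiplicities --- equivalently, the local terms in the Lefschetz--Verdier trace formula in the non-projective stage --- are identically $1$, which ultimately rests on the vanishing of $d(\Fr_Z^n)$ together with careful bookkeeping inside the six-functor and duality formalism underlying the $\ell$-adic cycle class map; this is precisely where one invokes the detailed development in Milne's book. A lesser difficulty is checking the $\Fr$-compatibility of the Gysin sequences and of Poincar\'{e} duality used in the d\'{e}vissage, and reducing an arbitrary smooth quasi-projective $Z$ to smooth projective varieties, which one may instead streamline using de Jong's alterations in place of iterated stratification.
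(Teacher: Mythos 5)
The paper does not actually prove this statement: it is quoted as a standard foundational result (Grothendieck's trace formula), with Milne's lectures cited for a full treatment, so there is no internal argument of the paper to compare yours against; your proposal must stand on its own.

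On its own terms, the sketch has a genuine gap at the d\'{e}vissage stage. The open--closed additivity argument (Gysin sequences plus Noetherian induction) does reduce the formula for arbitrary varieties to smooth quasi-projective ones, but the next step --- ``embedding any smooth quasi-projective variety into a projective closure'' --- does not get you to the smooth projective case: in characteristic $p$ the closure need not be smooth and resolution of singularities is not available, while your graph-meets-diagonal computation with the classes $[\Gamma]$ and $[\Delta]$ genuinely needs smoothness and properness (Poincar\'{e} duality, the K\"{u}nneth decomposition of the diagonal, and the intersection-theoretic interpretation of the cup product all fail or change meaning otherwise). The remedy you offer, de Jong's alterations, does not mesh with the additivity argument either: an alteration is a proper generically finite surjection, not a stratification, and neither $|Z(\FF_{q^n})|$ nor the alternating trace transforms in any simple way under such a cover, so one would need Galois alterations and cohomological descent, i.e., a substantial extra layer you have not supplied. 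The standard proofs proceed differently: either one d\'{e}visses to curves by fibering, which forces proving the stronger statement with coefficients in arbitrary constructible $\QQ_\ell$-sheaves so that the compactly supported Leray spectral sequence can be fed back into the induction, and then one treats the curve case directly; or one invokes the Lefschetz--Verdier formula for the non-proper Frobenius correspondence and proves that the local terms equal $1$ --- which you correctly flag as ``the hard part,'' but which is itself a theorem requiring real work, not bookkeeping. Your local observation that $d\Fr_Z$ vanishes, so the fixed points of $\Fr_Z^n$ are transversal of multiplicity one and coincide with $Z(\FF_{q^n})$, is correct and is indeed the key local input in the smooth projective case; the missing idea is a valid global reduction to that case (or a direct argument with sheaf coefficients), and as written the proof does not go through without it.
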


The final part of the Weil conjectures was Deligne's proof of the Riemann hypothesis: for a~smooth and projective $\FF_q$-variety $Z$ all eigenvalues of $\Fr_{Z}$ on $H^i_c(\ov{Z},\QQ_l)$ have absolute value~$q^{i/2}$ (for any choice of embedding $\ov{\QQ_l} \hookrightarrow \CC$). This motivates the following definition of purity.

\begin{defn}An $\FF_q$-variety $Z$ is \emph{$($cohomologically$)$ pure} if all eigenvalues of $\Fr_{Z}$ on $H^i_c\big(\ov{Z},\QQ_l\big)$ have absolute value $q^{i/2}$.
\end{defn}

Thus Deligne proved that all smooth projective varieties are pure. We can now give a standard proof of the purity of a smooth quasi-projective variety with a semi-projective $\GG_m$-action using the Bia{\l}ynicki-Birula decomposition~\cite{BB}. In particular, this will provide a proof of the purity of the $\FF_q$-variety $X_0$ mentioned at the end of Section~\ref{sec Step 2}.

\begin{prop}[{\cite[Lemma A.2]{CBVdB}}]\label{prop semiproj Gm implies pure} Let $Z$ be a smooth quasi-projective $\FF_q$-variety with a~semi-projective $\GG_m$-action; then $Z$ is pure.
\end{prop}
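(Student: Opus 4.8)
The plan is to derive purity of $Z$ from the Bia{\l}ynicki-Birula decomposition attached to the semi-projective $\GG_m$-action, exploiting that its strata are affine bundles over smooth \emph{projective} varieties, which are pure by Deligne, and then to propagate purity along the decomposition using the Gysin long exact sequences. To set this up: since $Z$ is smooth the fixed locus $Z^{\GG_m}$ is smooth, and by semi-projectivity it is projective, so its connected components $Z^{\GG_m} = \bigsqcup_\alpha F_\alpha$ are smooth projective $\FF_q$-varieties. For each $\alpha$ let $Z_\alpha^+ := \{ z \in Z : \lim_{t \to 0} t\cdot z \in F_\alpha \}$ be the associated attracting cell; because $\lim_{t\to 0} t\cdot z$ exists for all $z \in Z$, the cells cover $Z$ and $Z = \bigsqcup_\alpha Z_\alpha^+$ is the Bia{\l}ynicki-Birula decomposition \cite{BB}: each $Z_\alpha^+$ is smooth and locally closed, the map $Z_\alpha^+ \to F_\alpha$, $z \mapsto \lim_{t\to 0}t\cdot z$, is a Zariski-locally trivial $\AA^{n_\alpha}$-fibration (with $n_\alpha$ the rank of the positive-weight part of $T_Z|_{F_\alpha}$), and the decomposition is filtrable, so after reindexing there is a chain of open subvarieties $\varnothing = U_0 \subset U_1 \subset \cdots \subset U_m = Z$ with $U_k \setminus U_{k-1} = Z_{\alpha_k}^+$. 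This is exactly the structure already invoked for $X_0$ in Section~\ref{sec Step 2}.

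I would then check that each stratum $Z_\alpha^+$ is pure. Since $F_\alpha$ is smooth projective, $H^i_c(\ov{F_\alpha}, \QQ_\ell) = H^i(\ov{F_\alpha},\QQ_\ell)$ is pure of weight $i$ by Deligne's theorem. Applying the cohomology formula for a Zariski-locally trivial $\AA^{n_\alpha}$-fibration recalled above gives
\[
H^i_c\big(\ov{Z_\alpha^+}, \QQ_\ell\big) \cong H^{i - 2n_\alpha}_c\big(\ov{F_\alpha},\QQ_\ell\big) \otimes H^2_c\big(\AA^1, \QQ_\ell\big)^{\otimes n_\alpha},
\]
and as $H^2_c(\AA^1,\QQ_\ell)$ is one-dimensional with Frobenius acting by multiplication by $q$ (i.e.\ it is $\QQ_\ell(-1)$, pure of weight $2$), the right-hand side is pure of weight $(i - 2n_\alpha) + 2n_\alpha = i$. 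Hence $Z_\alpha^+$ is pure.

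The last step is an induction along the filtration $U_0 \subset \cdots \subset U_m$ showing that each $U_k$ is pure; the base case $U_0 = \varnothing$ is trivial, and $U_m = Z$ is the assertion. For the inductive step, $Z_{\alpha_k}^+ = U_k \setminus U_{k-1}$ is closed in $U_k$ with open complement $U_{k-1}$, so the Gysin sequence gives a Frobenius-equivariant exact sequence
\[
\cdots \to H^i_c\big(\ov{U_{k-1}},\QQ_\ell\big) \to H^i_c\big(\ov{U_k},\QQ_\ell\big) \to H^i_c\big(\ov{Z_{\alpha_k}^+},\QQ_\ell\big) \to \cdots,
\]
with first map $f$ and second map $g$. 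By the inductive hypothesis $H^i_c(\ov{U_{k-1}},\QQ_\ell)$ is pure of weight $i$, and by the previous step so is $H^i_c(\ov{Z_{\alpha_k}^+},\QQ_\ell)$; exactness yields a short exact sequence $0 \to \im f \to H^i_c(\ov{U_k},\QQ_\ell) \to \im g \to 0$ in which $\im f$ is a quotient and $\im g$ a submodule of a pure weight-$i$ module, hence both are pure of weight $i$. Since the characteristic polynomial of $\Fr_{U_k}$ on $H^i_c(\ov{U_k},\QQ_\ell)$ is the product of those on $\im f$ and $\im g$, it follows that $H^i_c(\ov{U_k},\QQ_\ell)$ is pure of weight $i$, completing the induction and hence the proof.

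The only substantive ingredient is Deligne's purity theorem applied to the smooth projective fixed components $F_\alpha$; the remainder is formal. I expect the step requiring the most care to be the last one: it works precisely because in each cohomological degree $i$ the two pieces entering the Gysin sequence are pure of the \emph{same} weight $i$ (which itself relies on the Tate twist in the affine-bundle formula being by exactly $n_\alpha$), so that purity survives the resulting extension — an extension of pure modules of different weights would fail to be pure. It is also worth keeping in mind that smoothness and semi-projectivity are exactly what make $Z^{\GG_m}$ smooth projective and the attracting cells $Z_\alpha^+$ affine bundles covering all of $Z$, so both hypotheses are genuinely used.
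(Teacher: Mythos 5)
Your proof is correct and follows essentially the same route as the paper: the Bia{\l}ynicki-Birula decomposition attached to the semi-projective $\GG_m$-action, purity of the strata via Deligne's theorem for the smooth projective fixed components combined with the affine-fibration formula, and an induction along the filtration using the Gysin sequences (your sub/quotient weight argument is just a slight repackaging of the paper's observation that the sequences split by purity of the strata). No gaps to report.
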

\begin{proof}
The assumptions imply that $Z$ has the following Bia{\l}ynicki-Birula decomposition \cite{BB}. Let $Z^{\GG_m} = \cup_{j \in J} Z_j$ denote the decomposition of the fixed locus into connected components; then there is a decomposition
\begin{gather*} Z = \bigsqcup_{j \in J} Z_j^+, \qquad \text{where} \qquad Z_j^+:=\big\{ z \in Z \colon \lim_{t \ra 0} t \cdot z \in Z_j \big\} \end{gather*}
and the limit map $p_j\colon Z_j^+ \ra Z_j$ is a Zariski locally trivial affine space fibration. By assumption, the smooth varieties $Z_j$ are projective, and thus pure; the same also holds for the smooth strata~$Z_j^+$, as
 $p_j\colon Z_j^+ \ra Z_j$ is a Zariski locally trivial affine fibration. Finally, the fact that~$Z$ is quasi-projective means that there is a filtration of $Z$ by closed subsets whose successive differences are the strata $Z_j^+$, and one can show that the Gysin sequences associated to this filtration of $Z$ split into short exact sequence using the purity of the strata $Z_j^+$. Consequently, we deduce that $Z$ is also pure.
\end{proof}

For certain pure smooth $\FF_q$-varieties, their $\ell$-adic Betti numbers can be described using the following result.

\begin{Lemma}[{\cite[Lemma A.1]{CBVdB}}]\label{lemma purity point count} Let $Z$ be a smooth variety defined over $\FF_q$ which is pure and has polynomial point count over $\FF_{q^r}$; that is $|Z(\FF_{q^r})| = P(q^r)$ for a polynomial $P(t) \in \ZZ[t]$. Then
\begin{gather*} P(q) = \sum_{i \geq 0} \dim H^{2i}_c\big(\ov{Z},\QQ_{\ell}\big)q^{i} \end{gather*}
and in particular $P(t) \in \NN[t]$.
\end{Lemma}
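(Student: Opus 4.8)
The plan is to combine the Grothendieck--Lefschetz trace formula with Deligne's purity theorem, and then pin down the Frobenius eigenvalues by comparing two expansions of the point count, regarded as a function of $r$, as a finite exponential sum.

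First I would apply the trace formula to $Z$ over each field $\FF_{q^r}$. Writing $b_i := \dim H^i_c(\ov{Z},\QQ_{\ell})$ and letting $\alpha_{i,1},\dots,\alpha_{i,b_i}\in\CC$ be the eigenvalues of $\Fr_Z$ on $H^i_c(\ov{Z},\QQ_{\ell})$ (for a fixed embedding $\ov{\QQ_{\ell}}\hookrightarrow\CC$), the trace formula gives
\[ P(q^r) \;=\; |Z(\FF_{q^r})| \;=\; \sum_{i=0}^{2\dim Z}(-1)^i\sum_{j=1}^{b_i}\alpha_{i,j}^{\,r}\qquad\text{for all }r\geq 1. \]
On the other hand, writing $P(t)=\sum_k c_k t^k$ with $c_k\in\ZZ$, the left-hand side equals $\sum_k c_k(q^k)^r$. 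Thus $r\mapsto P(q^r)$ admits two representations as a $\CC$-linear combination of $r$-th powers of nonzero complex numbers, and by the uniqueness of such representations (a Vandermonde argument using $r=1,\dots,N$ for large $N$) the two lists of (base, coefficient) pairs must agree after collecting equal bases. Purity enters already here: since $|\alpha_{i,j}|=q^{i/2}$, an eigenvalue can occur in only one cohomological degree, so the coefficient attached to a base $\beta=\alpha_{i,j}$ on the right is exactly $(-1)^i$ times the multiplicity of $\beta$ in $H^i_c(\ov{Z},\QQ_{\ell})$.

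Matching the two sides, every eigenvalue $\alpha_{i,j}$ equals some power $q^k$; but then $q^{i/2}=|\alpha_{i,j}|=|q^k|=q^k$ forces $i=2k$. Consequently $H^i_c(\ov{Z},\QQ_{\ell})=0$ for odd $i$, and for $i=2k$ every eigenvalue of $\Fr_Z$ on $H^{2k}_c(\ov{Z},\QQ_{\ell})$ equals $q^k$, so $\tr\big(\Fr_Z^r\colon H^{2k}_c(\ov{Z},\QQ_{\ell})\big)=b_{2k}\,q^{rk}$. The trace formula therefore collapses to $P(q^r)=\sum_k b_{2k}\,q^{rk}$ for all $r\geq 1$, and comparing once more with $P(q^r)=\sum_k c_k(q^k)^r$ yields $c_k=b_{2k}=\dim H^{2k}_c(\ov{Z},\QQ_{\ell})$ for every $k$. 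In particular $c_k\geq 0$, so $P(t)\in\NN[t]$, and evaluating at $t=q$ gives the stated formula $P(q)=\sum_{i\geq 0}\dim H^{2i}_c(\ov{Z},\QQ_{\ell})\,q^i$.

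The one point requiring care — and the hinge of the argument — is the uniqueness of the exponential-sum expansion combined with the purity input that equal absolute values force equal cohomological degree; everything else is a direct consequence of the trace formula and Deligne's theorem. (Note the hypotheses ensure all sums in sight are finite, so no convergence questions arise.)
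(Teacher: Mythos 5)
Your argument is correct and is essentially the standard proof of the cited result \cite[Lemma~A.1]{CBVdB}, which the paper itself only quotes without reproducing: the Grothendieck--Lefschetz trace formula, the purity hypothesis, and linear independence of the functions $r \mapsto \beta^r$ for distinct nonzero $\beta$ (your Vandermonde step) force every Frobenius eigenvalue to be an integral power of $q$, kill the odd compactly supported cohomology, and identify each coefficient $c_k$ of $P$ with $\dim H^{2k}_c\big(\ov{Z},\QQ_{\ell}\big)$. One cosmetic remark: the input $|\alpha_{i,j}|=q^{i/2}$ is the purity \emph{hypothesis} of the lemma rather than Deligne's theorem (which would not apply, as $Z$ is not projective), and indeed that hypothesis is all you actually use.
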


Let us finally note that via the comparison theorem between \'{e}tale and singular cohomology, one can relate this result concerning $\ell$-adic cohomology with the usual singular cohomology. We will return to this statement in the final step.

In fact, we will want to compare the Betti cohomology of a complex variety with the point count of a reduction of this variety to a finite field using a theorem of Katz, which appears as an appendix in \cite{HRV}. For a complex variety $Z_\CC$, we can choose a spreading out $Z_R$ of $Z$ over a~finitely generated $\ZZ$-algebra $R$ (i.e., $Z_\CC = Z_R \times_R \CC$) and let $Z$ be a reduction of $Z_R$ to some finite field $\FF_q$. If $Z$ has polynomial point count $P_Z(t)\in \ZZ[t]$; then the $E$-polynomial of $Z_{\CC}$ (whose coefficients are the virtual Hodge numbers) is given by $E_{Z_\CC} (x, y) = P_Z (xy)$. If additionally the compactly supported cohomology of $Z$ is pure, then $P_Z(q) =E_Z \big(q^{1/2},q^{1/2}\big) = P_c\big(Z_{\CC},q^{1/2}\big)$ (where $P_c$ denotes the compactly supported Poincar\'{e} polynomial).

\subsection[Point count for the general fibre and absolutely indecomposable representations]{Point count for the general fibre and absolutely indecomposable\\ representations}

In this section, we let $p$ be a prime number and $q$ be a power of $p$. The goal is to compute $|X(\FF_q)|$. The first result we need, is that for all sufficiently large primes, all points in $\mu^{-1}(\theta)$ are $\theta$-stable by the following lemma.

\begin{Lemma}\label{lemma stab on theta level set}Let $\theta$ be a generic stability parameter with respect to $d$. Then for a field $k=\FF_q$ of sufficiently large prime characteristic, we have $\mu^{-1}(\theta)^{\theta\text{\rm -ss}} = \mu^{-1}(\theta)^{\theta\text{\rm -s}} = \mu^{-1}(\theta)$.
\end{Lemma}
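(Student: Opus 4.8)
The plan is to identify $\mu^{-1}(\theta)$ with the representation variety $\rep_d\big(\overline{Q},\cR_\theta\big)$ of the deformed preprojective algebra (via the Remark above, with $\eta=\theta$), which makes this statement a reprise of Lemma~\ref{lemma stab trivial}; I would then carry out the argument on geometric points. As in the proof of Lemma~\ref{lemma stab trivial}, it suffices to base change to an algebraic closure, so I may assume $k=\overline{\FF_q}$ and verify the claim on closed points. Since one always has $\mu^{-1}(\theta)^{\theta\text{\rm -s}}\subseteq\mu^{-1}(\theta)^{\theta\text{\rm -ss}}\subseteq\mu^{-1}(\theta)$, it is enough to show that every $M\in\mu^{-1}(\theta)$ is $\theta$-stable; in fact I will show that $M$ is simple.

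The one non-formal ingredient is that the relations $\cR_\theta$ pass to subrepresentations. Given an $\overline{Q}$-subrepresentation $W'\subseteq M$ with dimension vector $d'$, I would choose vector space splittings $M_v=W'_v\oplus W''_v$; with respect to these, each $M_a$ and each $M_{a^*}$ ($a\in A$) is block upper-triangular, because $W'$ is a subrepresentation. Hence the upper-left block of any word in the $M_a,M_{a^*}$ is the same word in the upper-left blocks, and in particular the upper-left block of $\mu(M)$ equals $\mu(W')$. As $\mu(M)=\theta$ means $\mu(M)_v=\theta_v I_{d_v}$ (up to the sign and ordering conventions that fix $\cR_\theta$), its upper-left block is $\theta_v I_{d'_v}$, so $W'$ lies in $\rep_{d'}\big(\overline{Q},\cR_\theta\big)$. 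Exercise~\ref{exer nec cond for relations via trace} then yields $\theta\cdot d'=0$ in $\FF_q$, i.e.\ $p\mid\theta\cdot d'$.

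Finally I would fix the characteristic. The dimension vectors $d'$ with $0\neq d'\leq d$ and $d'\neq d$ form a finite set, and genericity of $\theta$ with respect to $d$ says exactly that $\theta\cdot d'\neq 0$ in $\ZZ$ for each of them. Therefore, as soon as $p$ exceeds $\max\{\,|\theta\cdot d'| : 0\neq d'\leq d\,\}$, the divisibility $p\mid\theta\cdot d'$ forces $d'\in\{0,d\}$; so $M$ has no proper nonzero $\overline{Q}$-subrepresentation, hence is simple and a fortiori $\theta$-stable. This gives $\mu^{-1}(\theta)=\mu^{-1}(\theta)^{\theta\text{\rm -ss}}=\mu^{-1}(\theta)^{\theta\text{\rm -s}}$ for $p$ large, which is the meaning of ``sufficiently large prime characteristic'' here.

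I do not expect a serious obstacle: the block-triangular identity is essentially a one-line computation, and the remaining work is the reduction to closed points together with the finiteness estimate that makes the bound on $p$ uniform. The only place that needs genuine care is bookkeeping the sign conventions relating $\mu(X)=\sum_{a\in A}[X_a,X_{a^*}]$ to the equations defining $\cR_\theta$, so that ``$M\in\mu^{-1}(\theta)$'' and ``$M$ satisfies $\cR_\theta$'' really do coincide.
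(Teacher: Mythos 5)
Your proposal is correct and follows essentially the same route as the paper: the paper proves this lemma by identifying $\mu^{-1}(\theta)$ with $\rep_d\big(\overline{Q},\cR_\theta\big)$ and invoking Lemma~\ref{lemma stab trivial}, whose proof is exactly your argument (reduce to the algebraic closure, observe via Exercise~\ref{exer nec cond for relations via trace} that any subrepresentation would give a dimension vector $d'$ with $\theta\cdot d'=0$ in $k$, and use genericity together with $p\gg 0$ to rule this out). Your block-triangular verification that the relations $\cR_\theta$ pass to subrepresentations, and the explicit bound $p>\max\{|\theta\cdot d'|\colon 0\neq d'\le d\}$, merely make explicit details the paper leaves implicit.
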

\begin{proof}This follows by Lemma~\ref{lemma stab trivial} as $\mu^{-1}(\theta) = \rep_d\big(\overline{Q},\cR_\theta\big)$.
\end{proof}

In order to count points of $\mu^{-1}(\theta)$ over $\FF_q$, we will relate such representations of $\big(\ov{Q},\cR_\theta\big)$ with absolutely indecomposable representations of $Q$ using a theorem of Crawley-Boevey \cite[Theorem 3.3]{CBmoment} concerning the liftings of indecomposable representations of $Q$ to $\big(\ov{Q},\cR_\theta\big)$. We recall that there is a natural projection $\rep_d\big(\overline{Q}\big) \ra \rep_d(Q)$, whose restriction to the fibre of the moment map over $\theta$ we denote by
\begin{gather*} \pi \colon \ \mu^{-1}(\theta) \ra \rep_d(Q). \end{gather*}

\begin{Theorem}[Crawley-Boevey \cite{CBmoment}]\label{CB_geom_mmap} For $\theta$ generic with respect to $d$, the image of $\pi\colon \mu^{-1}(\theta) $ $\ra \rep_d(Q)$ on $\FF_q$-points is the set of indecomposable representations. Moreover, the fibre of $\pi$ over an indecomposable $d$-dimensional $\FF_q$-representation $W$ of $Q$ is identified with the dual of the space of self-extensions of~$W$
\begin{gather*} \pi^{-1}(W)\cong \Ext^1_Q(W,W)^*. \end{gather*}
\end{Theorem}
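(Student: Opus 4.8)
The plan is to reduce both assertions to a single linear-algebra computation carried out on each fibre of the forgetful projection $\RepQbar \ra \Rep$, using the complex of Exercise~\ref{exer hom and exts}. First I would fix a $d$-dimensional $\FF_q$-representation $W$ of $Q$ with underlying point $M \in \Rep$ and identify the fibre of $\RepQbar \ra \Rep$ over $M$ with the cotangent space $T^*_M\Rep$: the coordinates attached to the starred arrows form $\prod_{a\in A}\Hom(W_{h(a)},W_{t(a)})$, which is dual to $T_M\Rep=\prod_{a\in A}\Hom(W_{t(a)},W_{h(a)})$ via the trace pairing $\langle X,Y\rangle=\sum_{a\in A}\tr(X_aY_{a^*})$. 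Reading off formula~\eqref{eqn alg mmap quiver}, the restriction of the algebraic moment map $\mu$ to this fibre is then exactly the transpose $\alpha^*\colon T^*_M\Rep \ra \LieGL_d^*$ of the infinitesimal action map $\alpha\colon\LieGL_d\ra T_M\Rep$, $C\mapsto (C_{h(a)}M_a-M_aC_{t(a)})_{a\in A}$, which is precisely the middle map of the exact sequence of Exercise~\ref{exer hom and exts} taken with $W=W'$.

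Next I would dualise that four-term exact sequence to obtain
\begin{gather*}
0 \longrightarrow \Ext^1_Q(W,W)^* \overset{\beta^*}{\longrightarrow} T^*_M\Rep \overset{\alpha^*}{\longrightarrow} \LieGL_d^* \longrightarrow \End_Q(W)^* \longrightarrow 0,
\end{gather*}
so that $\im(\alpha^*)$ is the annihilator $\End_Q(W)^{\perp}$ of $\End_Q(W)\subset\LieGL_d$ and $\ker(\alpha^*)=\im(\beta^*)\cong\Ext^1_Q(W,W)^*$. It follows that $M$ lies in the image of $\pi$ on $\FF_q$-points if and only if $\theta\in\End_Q(W)^{\perp}$, i.e.\ $\sum_{v\in V}\theta_v\tr(A_v)=0$ for all $A=(A_v)_{v\in V}\in\End_Q(W)$; and whenever this holds, $\pi^{-1}(W)=\pi^{-1}(M)\cap\mu^{-1}(\theta)$ is a coset of $\ker(\alpha^*)$ inside $T^*_M\Rep$, hence isomorphic to $\Ext^1_Q(W,W)^*$ (canonically once one solution has been fixed). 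This yields the ``moreover'' clause and reduces the first assertion to deciding when $\theta$ annihilates $\End_Q(W)$.

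For that, I would argue in two directions. If $W=W'\oplus W''$ is a non-trivial decomposition, the idempotent $e'=(\id_{W'},0)\in\End_Q(W)$ gives $\sum_v\theta_v\tr(e'_v)=\theta\cdot\dim W'\neq 0$ because $\theta$ is generic with respect to $d$ and $0\neq\dim W'<d$; hence $\theta\notin\End_Q(W)^{\perp}$. Conversely, if $W$ is indecomposable over $\FF_q$ then $\End_{\FF_q}(W)$ is local, and by Wedderburn its residue division algebra is a finite field $\FF_{q^m}$; if $m>1$ then $W$ carries the structure of an $\FF_{q^m}$-representation, forcing $m\mid d_v$ for all $v$ and $\theta\cdot(d/m)=\tfrac1m(\theta\cdot d)=0$ with $0\neq d/m<d$, contradicting genericity (so genericity of $\theta$ already forces $d$ indivisible and $m=1$). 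Thus $\End_{\FF_q}(W)=\FF_q\cdot\id_W\oplus J$ with $J$ the radical; radical endomorphisms act nilpotently on each $W_v$ and are therefore traceless, so for $A=\lambda\,\id_W+n$ we get $\sum_v\theta_v\tr(A_v)=\lambda(\theta\cdot d)=0$, i.e.\ $\theta\in\End_Q(W)^{\perp}$.

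The step I expect to be the main obstacle is the identification of the restriction of $\mu$ to a fibre of $\pi$ with the \emph{transpose} of the infinitesimal action map: getting the identifications and signs right, and verifying that the dualised sequence is literally the one from Exercise~\ref{exer hom and exts}. Once this and that exact sequence are in hand the image and fibre statements are formal, and the only genuinely arithmetic input --- that the residue field of $\End_{\FF_q}(W)$ cannot be a proper extension of $\FF_q$ --- is forced by the genericity hypothesis on $\theta$.
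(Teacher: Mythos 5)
Your argument is correct and is essentially the paper's own proof: dualise the Hom--Ext sequence of Exercise~\ref{exer hom and exts} to see that $W$ lifts to $\mu^{-1}(\theta)$ exactly when $\theta$ annihilates the traces of $\End_Q(W)$, rule out decomposables via idempotents and genericity, handle indecomposables via locality of $\End_Q(W)$ (the paper cites Lemma~\ref{lemma end indecomp} where you invoke Wedderburn--Malcev to force the residue field to be $\FF_q$ and note nilpotents are traceless), and read off the fibre as a torsor under $\ker(\alpha^*)\cong\Ext^1_Q(W,W)^*$. Your treatment is just a more detailed write-up of the same route, and your remark that the fibre is canonically identified with $\Ext^1_Q(W,W)^*$ only after choosing a lift is a fair precision of the statement.
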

\begin{proof}For $W \in \rep_d(Q)(\FF_q)$, we consider the dual of the exact sequence in Exercise \ref{exer hom and exts}
\begin{gather*} 0 \ra \Ext^1_Q(W,W)^* \ra \rep_d(Q^{\op}) \ra \fg\fl_d^* \ra \End(W)^* \ra 0. \end{gather*}
Then $W$ lifts to a representation of $\big(\ov{Q},\cR_\theta\big)$ if and only if $\theta$ is in the image of $\rep_d(Q^{\op}) \ra \fg\fl_d^*$; that is, $\sum\limits_{v \in V} \theta_v \tr(f_v)=0$ for any $f \in \End(W)$. If $W = W_1 \oplus W_2$ and $f \in \End(W)$ is the projection onto $W_1$, then it follows that $\theta \cdot \dim(W_1) = 0$. Since $\theta$ is assumed to be generic with respect to $d$, we see that only indecomposable representations of~$Q$ can lift to $\big(\ov{Q},\cR_\theta\big)$. To prove that an indecomposable representation lifts, one uses the fact that~$\End(W)$ is local for~$W$ indecomposable (see Lemma~\ref{lemma end indecomp}).
\end{proof}

A final technical tool required to relate the point count of $X$ with absolutely indecomposable representations of $Q$ over finite fields of large characteristic is Burnside's formula for the number of orbits under a finite group action.

\begin{Lemma}[Burnside's formula]\label{burnside}
Let $G$ be a finite group acting on a finite set $Y$, then
\begin{gather*} | Y/G | := \frac{1}{|G|} \sum_{g \in G} | Y^g | = \frac{1}{|G|} \sum_{y \in Y} |\Stab_G(y)|.\end{gather*}
\end{Lemma}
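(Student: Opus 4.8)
The plan is to prove both equalities by elementary double counting, the only external input being the orbit--stabiliser theorem. First I would introduce the set of fixed pairs
\[ Z := \{ (g,y) \in G \times Y \colon g \cdot y = y \}. \]
Counting $Z$ by first fixing $g \in G$ gives $|Z| = \sum_{g \in G} |Y^g|$, since for fixed $g$ the elements $y$ with $(g,y) \in Z$ are exactly those fixed by $g$. Counting $Z$ instead by first fixing $y \in Y$ gives $|Z| = \sum_{y \in Y} |\Stab_G(y)|$, since for fixed $y$ the elements $g$ with $(g,y) \in Z$ are exactly those in $\Stab_G(y)$. Equating these two expressions for $|Z|$ and dividing by $|G|$ yields the second equality in the statement.

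For the first equality, I would partition $Y$ into its $G$-orbits, $Y = \bigsqcup_{j \in J} O_j$ with $|J| = |Y/G|$, and fix a representative $y_j \in O_j$ for each $j$. The orbit--stabiliser theorem provides a bijection $G/\Stab_G(y_j) \cong O_j$, so $|\Stab_G(y_j)| = |G|/|O_j|$; moreover $|\Stab_G(y)|$ depends only on the orbit of $y$, since $\Stab_G(g \cdot y) = g\,\Stab_G(y)\,g^{-1}$ and conjugate subgroups have equal order. Hence for each $j$,
\[ \sum_{y \in O_j} |\Stab_G(y)| = |O_j| \cdot \frac{|G|}{|O_j|} = |G|, \]
and summing over $j \in J$ gives $\sum_{y \in Y} |\Stab_G(y)| = |G| \cdot |Y/G|$. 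Dividing by $|G|$ and combining with the previous paragraph completes the proof.

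There is no genuine obstacle here: the argument is entirely standard. The only point worth stating with a little care is the invariance of $|\Stab_G(y)|$ along an orbit, which is what allows the per-orbit sum above to collapse to $|G|$; everything else is the counting of $Z$ in two ways together with Lagrange's theorem applied to the stabiliser subgroups.
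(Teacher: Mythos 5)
Your argument is correct and complete: the double count of the incidence set $\{(g,y)\colon g\cdot y=y\}$ gives the second equality, and the orbit--stabiliser theorem together with the constancy of $|\Stab_G(y)|$ along each orbit gives the first. The paper states this lemma without proof, and your proof is exactly the standard argument it implicitly relies on, so there is nothing to compare or correct.
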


Now we can state and prove the main result of this section.

\begin{prop}[Crawley-Boevey and Van den Bergh]\label{prop point count X}
Let $d$ be indivisible and $\theta$ be generic with respect to $d$. Then for a prime $p \gg 0$ and $q$ a power of $p$, we have
\begin{gather*} \cA_{Q,d}(q) = q^{-e} |X(\FF_q)|, \end{gather*}
where $e := \frac{1}{2} \dim X$.
\end{prop}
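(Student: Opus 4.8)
The plan is to count the $\FF_q$-points of $X$ through the fibration $\pi\colon\mu^{-1}(\theta)\ra\rep_d(Q)$ supplied by Theorem~\ref{CB_geom_mmap}, and then to match the answer with $\cA_{Q,d}(q)$ by applying Burnside's formula (Lemma~\ref{burnside}) twice.

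First I would fix $p$ large enough that $\theta\cdot d'\neq 0$ in $\FF_q$ for every $0\neq d'<d$. Then Lemma~\ref{lemma stab on theta level set} applies, so every $\FF_q$-point of $\mu^{-1}(\theta)$ is $\theta$-stable, hence $\theta$-geometrically stable; its stabiliser in $\GL_d$ is the central subgroup $\Delta\cong\Gm$, and $X=\mu^{-1}(\theta)/\!/_{\chi_\theta}\G$ is a geometric quotient and a principal $\G$-bundle. Since the Brauer group of $\FF_q$ is trivial, the same reasoning that identifies $\Modgs(\FF_q)$ with isomorphism classes of geometrically stable representations shows that $X(\FF_q)$ is the set of $\GL_d(\FF_q)$-orbits on $\mu^{-1}(\theta)(\FF_q)$, each orbit having stabiliser $\Delta(\FF_q)$ of order $q-1$; Burnside's formula then gives
\[ |X(\FF_q)|\;=\;\frac{q-1}{|\GL_d(\FF_q)|}\,|\mu^{-1}(\theta)(\FF_q)|. \]

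Next I would evaluate $|\mu^{-1}(\theta)(\FF_q)|$ using Theorem~\ref{CB_geom_mmap} (enlarging $p$ if needed): the image of $\pi$ on $\FF_q$-points is exactly the set of indecomposable $d$-dimensional $\FF_q$-representations of $Q$, and the fibre over such a $W$ is a torsor under $\Ext^1_Q(W,W)^*$, hence has $q^{\dim\Ext^1_Q(W,W)}$ points, so $|\mu^{-1}(\theta)(\FF_q)|=\sum_{W}q^{\dim\Ext^1_Q(W,W)}$ with $W$ ranging over the indecomposable points of $\rep_d(Q)(\FF_q)$. On the other side, because $d$ is indivisible an indecomposable $d$-dimensional $\FF_q$-representation is automatically absolutely indecomposable, so $\cA_{Q,d}(q)$ is the number of $\GL_d(\FF_q)$-orbits on that same locus (orbits correspond to isomorphism classes by Exercise~\ref{exer orbits of action}), and a second application of Burnside gives $\cA_{Q,d}(q)=|\GL_d(\FF_q)|^{-1}\sum_{W}|\Aut_Q(W)|$. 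Here $\Aut_Q(W)=\End_Q(W)^\times$, and for $W$ absolutely indecomposable $\End_Q(W)$ is a finite local ring by Lemma~\ref{lemma end indecomp}; its residue division ring is a finite field $\FF_{q^s}$, and the crucial observation is that absolute indecomposability forces $s=1$, since otherwise $\End_Q(W)\otimes_{\FF_q}\ov{\FF_q}$ would fail to be local. Hence $|\Aut_Q(W)|=(q-1)q^{\dim_{\FF_q}\End_Q(W)-1}$, and by Exercise~\ref{exer hom and exts} one has $\dim_{\FF_q}\End_Q(W)=\dim\Ext^1_Q(W,W)+\langle d,d\rangle_Q$. Since $\dim X=\dim\mu^{-1}(\theta)-\dim\G$, $\dim\mu^{-1}(\theta)=\dim\rep_d\big(\ov{Q}\big)-\dim\G-1$ and $\langle d,d\rangle_Q=\dim\GL_d-\dim\rep_d(Q)$ by \eqref{eq euler dim}, one gets $e=\tfrac12\dim X=1-\langle d,d\rangle_Q$, so $|\Aut_Q(W)|=(q-1)q^{\dim\Ext^1_Q(W,W)-e}$.

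Substituting this into the Burnside expression for $\cA_{Q,d}(q)$ and comparing with the two point counts above,
\[ \cA_{Q,d}(q)=\frac{q-1}{q^{e}\,|\GL_d(\FF_q)|}\sum_{W}q^{\dim\Ext^1_Q(W,W)}=\frac{q-1}{q^{e}\,|\GL_d(\FF_q)|}\,|\mu^{-1}(\theta)(\FF_q)|=q^{-e}\,|X(\FF_q)|, \]
which is the claim. Once Theorem~\ref{CB_geom_mmap} and Lemma~\ref{lemma stab on theta level set} are invoked the remaining work is essentially bookkeeping; the two places I would be most careful are (i) aligning the several ``$p\gg 0$'' conditions, so that $\theta$ stays generic over $\FF_q$ and both cited results apply, and (ii) the step identifying the residue field of $\End_Q(W)$ with $\FF_q$ itself, which is exactly where indivisibility of $d$ (through absolute indecomposability) is used.
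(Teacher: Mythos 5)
Your proposal is in substance the paper's own proof: the same chain of Lemma~\ref{lemma stab on theta level set}, triviality of the Brauer group of $\FF_q$ to identify $X(\FF_q)$ with orbits, Theorem~\ref{CB_geom_mmap} for the image and fibres of $\pi$, Burnside's formula, Lemma~\ref{lemma end indecomp} for $|\Aut_Q(W)|$, and the Euler-form identity from Exercise~\ref{exer hom and exts}; whether one sums over $\mu^{-1}(\theta)(\FF_q)$ and divides by the fibre sizes (as in the paper) or over the indecomposable locus of $\rep_d(Q)(\FF_q)$ and multiplies by them (as you do) is only a reorganisation. The one slip is in your dimension bookkeeping: since the image of $\mu$ lies in the trace-zero subalgebra $\LieG \subset \fg\fl_d$, and on the stable locus $\mu$ is submersive onto $\LieG$, the correct count is $\dim \mu^{-1}(\theta) = \dim \rep_d\big(\ov{Q}\big) - \dim \G$, not $\dim \rep_d\big(\ov{Q}\big) - \dim \G - 1$; with the formula as you wrote it the arithmetic would give $e = \tfrac{1}{2} - \langle d,d\rangle_Q$ rather than the value $e = 1 - \langle d,d\rangle_Q$ you (correctly) assert and use. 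The paper sidesteps this by noting that $X$ is an algebraic symplectic reduction at a regular value, so $\dim X = 2 \dim \Mod = 2(1 - \langle d,d\rangle_Q)$; with that correction your argument goes through verbatim.
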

\begin{proof}For primes $p \gg 0$ and $q=p^r$, we have that all points in the $\FF_q$-variety $\mu^{-1}(\theta)$ are $\theta$-stable by Lemma \ref{lemma stab on theta level set}. Hence $\mu^{-1}(\theta) \ra X=\mu^{-1}(\theta)/\!/_{\chi_\theta} \G$ is a principal $\G$-bundle. Furthermore, as the Brauer group of $\FF_q$ is trivial, the rational points of $X$ are isomorphism classes of $\FF_q$-representations of $\big(\overline{Q},\cR_\theta\big)$ so
\begin{gather*} X(\FF_q) \cong \mu^{-1}(\theta)(\FF_q)/\G(\FF_q)\end{gather*}
and as $\G$-acts freely on $\mu^{-1}(\theta)$, we have
\begin{gather}\label{ptcount_X}
|X(\FF_q)| = \frac{| \mu^{-1}(\theta)(\FF_q)|}{|\G(\FF_q)|}.
\end{gather}

We now relate this point count to $\cA_{Q,d}(q)$ using Theorem \ref{CB_geom_mmap}. Since $d$ is indivisible, $\FF_q$-representations of dimension $d$ are indecomposable if and only if they are absolutely indecomposable. We let $\rep_d(Q)^{\ai}$ denote the constructible subset of absolutely indecomposable $d$-dimensional representations of $Q$. By definition of $\cA_{Q,d}(q)$, we have
\begin{gather*} \cA_{Q,d}(q):= \big| \rep_d(Q)^{\ai}(\FF_q) / \G(\FF_q) \big| \end{gather*}
and by Burnside's formula (Lemma \ref{burnside}) this equals
\begin{gather*} \cA_{Q,d}(q)= \frac{1}{| \G(\FF_q) |} \sum_{W \in \rep_d(Q)^{\ai}(\FF_q)} q^{-1} |\End_Q(W)|, \end{gather*}
where we use that $\Stab_{\G}(W) \cong \Aut_Q(W)/\GG_m $ and so $| \Stab_{\G}(W)| = q^{-1} |\End_Q(W)|$ by Lemma \ref{lemma end indecomp} below. Then by Theorem \ref{CB_geom_mmap}, we obtain
\begin{gather}\label{ptcount_A}
\cA_{Q,d}(q)= \frac{1}{| \G(\FF_q) |} \sum_{W \in \mu^{-1}(\theta)(\FF_q)} q^{-1} \frac{|\End(\pi(W))|}{|\Ext^1(\pi(W),\pi(W))|}.
\end{gather}
Since $\dim \pi(W) = d$, we have that
\begin{gather*} \langle d,d \rangle_Q = \dim \End_Q(\pi(W)) - \dim \Ext^1_Q(\pi(W),\pi(W))\end{gather*}
by Exercise \ref{exer hom and exts}. Therefore, combining \eqref{ptcount_X} and \eqref{ptcount_A} we obtain
\begin{gather*} \cA_{Q,d}(q) = q^{ \langle d,d \rangle_Q -1} |X(\FF_q)|.\end{gather*}
Finally, we recall from \eqref{eq euler dim} that $ \langle d,d \rangle_Q = \dim \GL_d - \dim \rep_d(Q)$ and so $\dim \cM^{\theta\text{-ss}}_d(Q) = 1 - \langle d,d \rangle_Q$, as $\GG_m \cong \Delta \subset \GL_d$ acts trivially. Since $X$ is an algebraic symplectic reduction of the action on $\RepQbar$ at a regular value, we have $\dim X = 2 \dim \cM^{\theta\text{-ss}}_d(Q)$. Thus $1 - \langle d,d \rangle_Q = \frac{1}{2} \dim X$, which completes the proof.
\end{proof}

It remains for us to describe the endomorphism ring of an absolutely indecomposable representation.

\begin{Lemma}\label{lemma end indecomp}Let $W$ be an indecomposable $\FF_q$-representation of $Q$. Then the following statements hold.
\begin{enumerate}\itemsep=0pt
\item[$i)$] Every endomorphism of $W$ is either nilpotent or invertible.
\item[$ii)$] $\End_Q(W)$ is local with nilpotent radical $\End_Q^{\nil}(W)$.
\item[$iii)$] $k_W:= \End_Q(W) /\End_Q^{\nil}(W)$ is a finite field containing $\FF_q$.
\end{enumerate}
If $W$ is absolutely indecomposable, then $k_W = \FF_q$ and
\begin{gather*} \frac{|\End_Q(W)|}{|\Aut_Q(W)|} = \frac{q}{q-1}. \end{gather*}
\end{Lemma}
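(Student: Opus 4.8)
The plan is to first establish the local structure of $\End_Q(W)$ using the Krull--Schmidt / Fitting machinery, and then extract the numerical identity for the absolutely indecomposable case as an easy consequence. For (i), I would use the fact that $\End_Q(W)$ is a finite-dimensional $\FF_q$-algebra: given $f \in \End_Q(W)$, the descending chain $\im f \supseteq \im f^2 \supseteq \cdots$ and ascending chain $\ker f \subseteq \ker f^2 \subseteq \cdots$ stabilise, so for large $N$ one gets a Fitting decomposition $W \cong \im f^N \oplus \ker f^N$ as quiver representations (the summands are subrepresentations because $f$ is a morphism of representations). Since $W$ is indecomposable, one of these summands is zero: either $\ker f^N = 0$, forcing $f$ injective hence (by finite-dimensionality) invertible, or $\im f^N = 0$, forcing $f$ nilpotent. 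For (ii), the set $\End_Q^{\nil}(W)$ of nilpotent endomorphisms is then exactly the set of non-units; one checks it is closed under addition (if $f,g$ are nilpotent but $f+g$ is a unit, multiply through by $(f+g)^{-1}$ to reduce to the case $f+g=1$ with $f,g$ nilpotent, and then $f = 1-g$ is a unit, contradiction) and under left/right multiplication by arbitrary endomorphisms, so it is a two-sided ideal consisting precisely of the non-units; hence $\End_Q(W)$ is local with maximal ideal $\End_Q^{\nil}(W)$, and this ideal is nil, hence nilpotent since the ring is finite.

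For (iii), the quotient $k_W := \End_Q(W)/\End_Q^{\nil}(W)$ is a finite-dimensional division algebra over $\FF_q$, hence (by Wedderburn's little theorem) a finite field, and it contains $\FF_q$ as the image of the scalar endomorphisms. If $W$ is absolutely indecomposable, then by definition $W \otimes_{\FF_q} \ov{\FF_q}$ is indecomposable, so $\End(W \otimes \ov{\FF_q}) = \End_Q(W) \otimes_{\FF_q} \ov{\FF_q}$ is again local; but if $k_W$ were a proper extension $\FF_{q^m}$ with $m>1$, then $k_W \otimes_{\FF_q} \ov{\FF_q} \cong \ov{\FF_q}^{\,m}$ would be a product of $m$ fields and hence not local, and this idempotent splitting lifts through the nilpotent radical to split $\End(W\otimes\ov{\FF_q})$ — contradiction. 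Therefore $k_W = \FF_q$.

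It remains to compute the ratio. Write $n := \dim_{\FF_q}\End_Q(W)$ and $r := \dim_{\FF_q}\End_Q^{\nil}(W)$, so $|\End_Q(W)| = q^n$. Since $\End_Q(W)/\End_Q^{\nil}(W) \cong \FF_q$ we have $r = n-1$. An endomorphism is an automorphism exactly when it is a unit, i.e.\ when its image in $\FF_q$ is nonzero; thus $\Aut_Q(W)$ is the preimage of $\FF_q^\times$ under the surjection $\End_Q(W) \onto \FF_q$, so
\begin{gather*}
|\Aut_Q(W)| = (q-1)\,|\End_Q^{\nil}(W)| = (q-1)q^{\,n-1}.
\end{gather*}
Hence $|\End_Q(W)|/|\Aut_Q(W)| = q^n / \big((q-1)q^{\,n-1}\big) = q/(q-1)$, as claimed. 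The only genuinely delicate point is part (iii): one must argue carefully that the idempotent decomposition of $k_W \otimes_{\FF_q}\ov{\FF_q}$ actually obstructs indecomposability of $W\otimes_{\FF_q}\ov{\FF_q}$, which uses that idempotents lift modulo the nilpotent ideal $\End_Q^{\nil}(W)\otimes\ov{\FF_q}$; everything else is routine linear algebra over a finite field.
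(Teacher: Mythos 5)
Your proposal is correct and follows essentially the same route as the paper: the Fitting lemma for (i), locality with nilpotent radical for (ii), Wedderburn's little theorem for (iii), and the count of units as the preimage of $\FF_q^\times$ under the projection $\End_Q(W) \ra k_W$ for the final formula. The only (cosmetic) difference is in showing $k_W = \FF_q$: you base-change to $\ov{\FF_q}$ and lift idempotents through the nilpotent ideal, whereas the paper base-changes to $k_W$ itself and uses that $\FF_q$ is perfect to compute $k_{W \otimes k_W} \cong k_W \otimes_{\FF_q} k_W \cong k_W^{\oplus n}$, so that $W \otimes k_W$ splits into $n$ indecomposables; both versions are sound.
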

\begin{proof}By the fitting lemma, for an endomorphism $f$ of $W$ we have $W = \ker (f^r) \oplus \im(f^r)$ for some $r$ as $W$ has finite length, and thus either $f$ is nilpotent or invertible. As a corollary, any finite-dimensional algebra which has only $0$ and $1$ as idempotents, is a local ring with nilpotent radical. This proves the first two statements. For any local ring with nilpotent radical, the quotient by this ideal is a division algebra. Hence $k_W:=\End_Q(W)/\End_Q^{\nil}(W)$ is a finite division algebra and by Wedderburn's theorem, we deduce that $k_W$ is a finite field $k_W$ containing $\FF_q$. This proves the first three statements.

Let $n =[k_W : \FF_q]$ and $W' = W \otimes_{\FF_q} k_W$; then as $\FF_q$ is perfect, we have
\begin{gather*} k_{W'} = \frac{\End_Q(W')}{\End_Q^{\nil}(W')} \cong \frac{\End_Q(W)}{\End_Q^{\nil}(W)} \otimes_{ \FF_q} k_W = k_W \otimes_{\FF_q} k_W = k_W^{\oplus n}. \end{gather*}
Hence $W'$ is a direct sum of $n$ pairwise non-isomorphic indecomposable $k_W$-representations. In particular, if $W$ is absolutely indecomposable, then $W'$ is indecomposable and thus $ k_W = \FF_q$.

For an absolutely indecomposable representation $W$, let $p\colon \End_Q(W) \!\ra\! \End_Q(W)\!/\!\End_Q^{\nil}(W)$ $\cong \FF_q$ denote the projection; then as $\End_Q^{\nil}(W) = p^{-1}(0)$ and $\Aut_Q(W) = p^{-1}(\FF_q^\times)$, we have
\begin{gather*} \frac{|\End_Q^{\nil}(W)|}{|\Aut_Q(W)|} = \frac{1}{q-1}. \end{gather*}
The final formula then follows, as $|\End_Q(W)| = |\End_Q^{\nil}(W)| + |\Aut_Q(W)|$.
\end{proof}

\subsection{Kac's theorem on absolutely indecomposable quiver representations}\label{sec Kac results}

The starting point for Kac's work \cite{Kac1,Kac2} is a remarkable discovery of Gabriel, which relates the indecomposable representations of quivers of finite representation type\footnote{A quiver $Q$ is of finite representation type if there are only finitely many isomorphism classes of indecomposable representations of $Q$.} and the positive roots of semisimple Lie algebras. Before stating this theorem, we recall that for a quiver without oriented cycles, the simple objects in $\crep(Q,k)$ are in bijection with the set of vertices $V$. Hence, the dimension vector induces an isomorphism
\begin{gather*}\dim \colon \ K_0(\crep(Q,k)) \ra \ZZ^V\end{gather*}
from the Grothendieck group of this category to the free abelian group generated by $V$.

\begin{Theorem}[Gabriel]Let $Q$ be a connected quiver without oriented cycles.
\begin{enumerate}\itemsep=0pt
\item[$1)$] $Q$ is of finite type if and only if the underlying graph of $Q$ is simply-laced Dynkin diagram.
\item[$2)$] In this case, if $\mathfrak{g}_Q$ denotes the corresponding semisimple complex Lie algebra for this Dynkin diagram, then $\dim\colon K_0(\crep(Q,k)) \ra \ZZ^V$ induces a bijection between the set of isomorphism classes of indecomposable representations of $Q$ and the set of positive roots of~$\mathfrak{g}$.
\end{enumerate}
\end{Theorem}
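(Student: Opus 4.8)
The plan is to establish both assertions through the Tits quadratic form $q_Q(d)=\langle d,d\rangle_Q$ and the Bernstein--Gelfand--Ponomarev reflection functors; since a Dynkin quiver will turn out to have only bricks as indecomposables, there is no loss in working over $k=\kb$.

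For the implication ``$Q$ of finite type $\Rightarrow$ underlying graph Dynkin'' I would argue geometrically. Finitely many indecomposables together with the Krull--Schmidt property force, for each $d\in\NN^V$, only finitely many isomorphism classes in $\rep_d(Q)$, hence finitely many $\GL_d$-orbits; as $\rep_d(Q)$ is irreducible one orbit $\GL_d\cdot M$ is dense, so by Exercise~\ref{exer orbits of action} $\dim\Ext^1_Q(M,M)=\codim(\GL_d\cdot M)=0$, whence by Exercise~\ref{exer hom and exts} $q_Q(d)=\dim\End_Q(M)\ge 1>0$. Thus $q_Q$ is positive on every nonzero $d\in\NN^V$; as the Gram matrix of its polarization is $2I-A$ with $A$ the adjacency matrix of the underlying graph of $Q$, the classical classification of such forms forces that graph to be a disjoint union of simply-laced Dynkin diagrams, so by connectedness of $Q$ it is exactly one of $A_n$, $D_n$, $E_6$, $E_7$, $E_8$; in particular $q_Q$ is positive definite, which I use below.

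For Part~2 and the converse of Part~1 I would bring in, for each sink $v$ of a quiver, the quiver $\sigma_vQ$ obtained by reversing all arrows at $v$ and the reflection functor $S_v^{+}\colon\crep(Q,k)\to\crep(\sigma_vQ,k)$, with partner $S_v^{-}$ at the resulting source. From the explicit description of $S_v^{\pm}$ I would record: they annihilate the simple $S(v)$ and induce mutually inverse equivalences between the subcategories of representations with no summand $\cong S(v)$; on such representations $\dim S_v^{+}M=s_v(\dim M)$ for the simple reflection $s_v$ on $\ZZ^V$; and $S_v^{+}M$ is indecomposable when $M$ is and $M\not\cong S(v)$. Since $Q$ has no oriented cycles it admits an admissible sequence of sinks $v_1,v_2,\dots$ (a sink of $Q$, then of $\sigma_{v_1}Q$, and so on, eventually periodic with period $|V|$), and a standard Lie-theoretic lemma shows the first $N$ terms of the induced reflection word form a reduced expression $w_0=s_{i_1}\cdots s_{i_N}$ for the longest element of the Weyl group $W_Q$, $N$ the number of positive roots, with the positive roots enumerated as $\beta_j:=s_{i_1}\cdots s_{i_{j-1}}(\alpha_{i_j})$. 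Given an indecomposable $M$, apply $S_{i_1}^{+},S_{i_2}^{+},\dots$ in turn: each step either annihilates the current module (exactly when it is the simple at $i_k$) or yields an indecomposable with dimension vector $s_{i_k}(\cdot)\in\NN^V$; since $w_0$ carries $\NN^V\setminus\{0\}$ into $-\NN^V$, annihilation occurs at some step $k\le N$, giving $M\cong S_{i_1}^{-}\cdots S_{i_{k-1}}^{-}(S(i_k))$ and $\dim M=\beta_k$. Conversely $S_{i_1}^{-}\cdots S_{i_{k-1}}^{-}(S(i_k))$ is an indecomposable of dimension $\beta_k$, and the equivalence property makes the two assignments mutually inverse; hence $\dim$ identifies isomorphism classes of indecomposables with positive roots of $\fg_Q$, and finiteness of the root system gives finite type.

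The main obstacle is the tight coupling of the homological and combinatorial inputs in Part~2: one must check the reflection functors are defined and behave as stated at \emph{every} step of the chosen sequence --- that the dimension vector stays in $\NN^V$ until the module is genuinely annihilated in the $S^{+}$-process, and, dually, that no premature annihilation occurs in the $S^{-}$-construction --- which is exactly where admissibility of the sink sequence, the reduced-word description of $w_0$ (and the resulting distinctness and positivity of the $\beta_j$), and positive-definiteness of $q_Q$ (equivalently, that every indecomposable over a Dynkin quiver is a rigid brick with $q_Q(\dim M)=1$) all enter simultaneously. Accordingly the two things I would write out in full are the properties of $S_v^{\pm}$ and the reduced-word lemma for $w_0$; the remainder is bookkeeping with the root system.
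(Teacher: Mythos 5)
The paper does not prove this statement: Gabriel's theorem is quoted as a classical result, with the reflection-functor proof attributed to Bernstein--Gel'fand--Ponomarev \cite{BGelP} and an exposition referenced in \cite{brion_notes}. Your proposal is precisely that standard argument (positive definiteness of the Tits form via a dense orbit and $\dim \Ext^1_Q(M,M)=\codim(\GL_d\cdot M)=0$ for the ``only if'' direction; BGP reflection functors along an admissible sink sequence, with the induced word giving a reduced expression for $w_0$ and the enumeration $\beta_j=s_{i_1}\cdots s_{i_{j-1}}(\alpha_{i_j})$ of the positive roots, for the rest), and its outline is sound, with the two lemmas you single out being exactly the ones that need full proofs. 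The one point to treat more carefully than your parenthetical remark suggests is the reduction to $k=\ov{k}$ in the ``finite type $\Rightarrow$ Dynkin'' direction: finite representation type over $k$ does not formally pass to $\ov{k}$ without an argument (the standard fix is the contrapositive --- if the graph is not Dynkin, exhibit infinitely many indecomposables over any field, or run the Tits-form estimate directly over $k$), whereas the brick property of Dynkin indecomposables only helps in the converse direction.
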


A nice exposition of this result is given in~\cite{brion_notes}. Bernstein, Gelfand and Ponomarev \cite{BGelP} provided a proof of this result which enhances this remarkable link between quiver representations and Lie algebras, by using reflection functors associated to the vertices of $Q$ to construct all indecomposable representations of a quiver $Q$ of finite representation type from simpler ones analogously to the way all positive roots in the corresponding Lie algebra arise from the simple roots by reflections given by elements of the Weyl group.

Kac \cite{Kac1} associates to a quiver $Q$ with $n$ vertices a root system $\Delta_Q \subset \ZZ^n$ and Weyl group $W_Q$ that only depend on the underlying graph of $Q$ as follows; in \cite{Kac_loops} the necessary modifications for quivers with loops is given. For a quiver $Q$, we recall that the Euler form $\langle -,-\rangle_Q$ on the lattice $\ZZ^n$ defines a matrix $B_Q =(b_{ij})$ where
\begin{gather*} b_{ij}:= \begin{cases} 1 - |a\colon i \ra i| & \mathrm{if } \ i = j, \\ - |a\colon i \ra j| & \mathrm{if }\ i \neq j. \end{cases} \end{gather*}
The symmetrised form $(-,-)_Q$ has associated symmetric matrix $A_Q = B + B^t_Q$. If $Q$ is a~quiver without loops, then $A_Q$ is a symmetric generalised Cartan matrix. Let $\{\alpha_1, \dots , \alpha_n\}$ denote the standard basis of $\ZZ^n$ and we define the set of fundamental roots $\Pi_Q=\{ \alpha_i\colon a_{ii} = 2 \}$ to be the basis vectors corresponding to vertices without loops. Then each fundamental root $\alpha_i \in \Pi_Q$ determines a reflection $r_i \in \Aut(\ZZ^n)$ defined by $r_i(\alpha_j) = \alpha_j - a_{ij} \alpha_i$ and we define the Weyl group $W_Q$ to be the subgroup generated by the fundamental reflections. There is an associated root system $\Delta_Q = \Delta_Q^+ \cup -\Delta_Q^+$ where $\Delta_Q^+$ is a set of positive roots, which decompose into real and imaginary roots. The real roots are the images of the fundamental roots under the Weyl group; these are the only roots if $Q$ is of finite representation type. For the construction of the imaginary roots, see \cite[Section~1.1]{Kac_loops}. If $Q$ is a quiver without loops, then there is a~(typically infinite-dimensional) Lie algebra $\fg_Q$ called the \emph{Kac--Moody Lie algebra} associated to the symmetric generalised Cartan matrix~$A_Q$.

\begin{Theorem}[Kac \cite{Kac1,Kac2}]\label{Theorem kac}\
\begin{enumerate}\itemsep=0pt
\item[$1)$] The number of $\cA_{Q,d}(q)$ of absolutely indecomposable quiver representations over $\FF_q$ does not depend on the orientation of $Q$ and satisfies $\cA_{Q,w(d)}(q)=\cA_{Q,d}(q)$ for $w \in W_Q$.
\item[$2)$] The map $\dim\colon K_0(\crep(Q,k)) \ra \ZZ^V$ induces a surjective map from the set of isomorphism classes of indecomposable representations over $k = \ov{k}$ of $Q$ onto the set of positive roots of~$\Delta^+_Q$.
\item[$3)$] $\cA_{Q,d}(q)$ is a polynomial in $q$ with integral coefficients.
\end{enumerate}
\end{Theorem}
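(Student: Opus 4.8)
The plan is to establish the three assertions in turn, using the Bernstein--Gelfand--Ponomarev reflection functors as the main tool for parts~1 and~2 and a point-counting argument over $\FF_q$ for part~3.

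\emph{Part 1.} For a vertex $i$ that is a sink (resp.\ source) of $Q$ there is a reflection functor $\Phi_i^{+}$ (resp.\ $\Phi_i^{-}$) from $\crep(Q,\FF_q)$ to $\crep(s_iQ,\FF_q)$, where $s_iQ$ reverses the arrows at $i$; it annihilates only the simple $S(i)$, restricts to a bijection on the remaining indecomposables, and sends a representation of dimension $d\neq\alpha_i$ to one of dimension $r_i(d)$. First I would check that $\Phi_i^{\pm}$ commutes with extension of scalars to $\overline{\FF_q}$, so that it preserves \emph{absolute} indecomposability; then, since any two acyclic orientations of a fixed graph are joined by a chain of reversals of all arrows at a sink, $\cA_{Q,d}(q)$ depends only on the underlying graph, and composing one reflection with this orientation-independence gives $\cA_{Q,r_i(d)}(q)=\cA_{Q,d}(q)$ for each fundamental reflection $r_i$ and every $d$ with $r_i(d)\geq0$. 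As $W_Q=\langle r_i\rangle$, this yields the full $W_Q$-invariance. Vertices with loops are excluded from $\Pi_Q$ and are handled separately as in \cite{Kac_loops}.

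\emph{Part 2.} For the map onto $\Delta_Q^{+}$ I would argue in two halves. The simple $S(i)$ has dimension the real root $\alpha_i$, and applying reflection functors produces an indecomposable of dimension $w(\alpha_i)$ for every real positive root. For the imaginary roots I would use Part~1 to reduce to a dimension vector $d$ in the fundamental domain ($d>0$, connected support, $(d,\alpha_i)_Q\leq0$ for all $i$) and show by a dimension estimate on $\rep_d(Q)$ that, for such $d$, the locus of representations admitting a fixed nontrivial direct-sum decomposition $d=d'+d''$ has strictly smaller dimension than $\rep_d(Q)$ itself, so the generic $d$-dimensional representation is indecomposable; connectedness of the support and the inequalities $(d,\alpha_i)_Q\leq0$ are exactly what makes this estimate go through. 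Running the same estimate in the other direction shows that if $d$ is not a positive root then every $d$-dimensional representation decomposes, so the image of $\dim$ on indecomposables is precisely $\Delta_Q^{+}$.

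\emph{Part 3, the main obstacle.} The hardest part is proving $\cA_{Q,d}(q)\in\ZZ[q]$, and for this I would work over all $\FF_{q^r}$ at once. By Burnside's formula the groupoid count $\sum_{[W]}|\Aut_Q(W)|^{-1}$ of $d$-dimensional $\FF_{q^r}$-representations equals $|\rep_d(Q)(\FF_{q^r})|/|\GL_d(\FF_{q^r})|$, an explicit rational function of $q^r$ since $|\rep_d(Q)(\FF_{q^r})|=q^{r\sum_a d_{t(a)}d_{h(a)}}$ and $|\GL_d(\FF_{q^r})|=\prod_{v}\prod_{j=0}^{d_v-1}(q^{rd_v}-q^{rj})$. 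The Krull--Schmidt theorem turns the generating series $\sum_d\big(|\rep_d(Q)(\FF_{q^r})|/|\GL_d(\FF_{q^r})|\big)x^d$ into an infinite product over isomorphism classes of indecomposables, and sorting the indecomposables of each dimension $\beta$ by their field of definition (via the $\Gal(\overline{\FF_q}/\FF_q)$-action) rewrites this product through the numbers $\cA_{Q,\beta}$; taking a plethystic logarithm and Möbius-inverting then expresses $\cA_{Q,d}(q)$ as a $\ZZ$-linear combination of products of the rational functions above evaluated at sub-dimension vectors $d'\leq d$. A priori this only exhibits $\cA_{Q,d}(q)$ as a rational function of $q$; the delicate step, the one I expect to fight with, is showing the cyclotomic denominators cancel and integrality persists. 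I would do this by induction on $d$: the recursion is identical after $q\mapsto q^{r}$, the proper sub-dimension contributions are already known to be polynomials with integer coefficients by induction, and a rational function whose only possible poles are roots of unity and which takes integer values at infinitely many prime powers must lie in $\ZZ[q]$. (Alternatively one can invoke a closed generating-function identity of Hua, which makes the rational-function shape manifest and reduces part~3 to a pole-cancellation check.)
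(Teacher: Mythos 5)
The serious gap is in Part~3. The automorphism-weighted series $\sum_d \big(|\rep_d(Q)(\FF_q)|/|\GL_d(\FF_q)|\big)x^d$ does \emph{not} factor over indecomposables: the simple Krull--Schmidt product identity holds only for the unweighted counts $\cM_{Q,d}(q)$ of isomorphism classes, as in~\eqref{gen series}, because for non-isomorphic indecomposable summands one has $|\Aut_Q(W'\oplus W'')|=|\Aut_Q(W')|\,|\Aut_Q(W'')|\,|\Hom_Q(W',W'')|\,|\Hom_Q(W'',W')|$ (and repeated summands contribute $\GL_m$ of the residue field), so the stacky weight of a direct sum is not the product of the weights of its summands. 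The identity that does hold for the weighted series is Hua's formula, whose proof is itself the conjugacy-class computation you are trying to bypass. This is precisely why Kac (and the sketch in the text) computes $\cM_{Q,d}(q)$ via Burnside's lemma, summing over conjugacy classes of $\GL_d(\FF_q)$ enumerated by Jordan data (irreducible polynomials and partitions): your ratio $|\rep_d|/|\GL_d|$ is the groupoid count coming from orbit--stabiliser, not the orbit count, and it cannot be fed into the naive product over indecomposables. In addition, your closing number-theoretic lemma is false as stated: $q(q+1)/2$ is a rational function with no poles that takes integer values at every prime power, yet does not lie in $\ZZ[q]$. Integrality of the coefficients needs genuine arithmetic input (the Galois-descent relations between counts over the fields $\FF_{q^r}$ and the resulting congruences exploited by Kac and Hua), not merely integrality of values at infinitely many points.

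Parts~1 and~2 also have gaps. In Part~2, the claim that for $d$ in the fundamental region the decomposable locus has strictly smaller dimension fails for isotropic imaginary roots: for the $2$-arrow Kronecker quiver and $d=(2,2)$, which satisfies $(d,\alpha_i)_Q\leq 0$ with connected support, the generic representation splits as a direct sum of two $(1,1)$-dimensional subrepresentations (diagonalise $A^{-1}B$), so the decomposable locus is dense; similarly a generic matrix for the Jordan quiver is diagonalisable. Kac's existence argument in the fundamental region is a more delicate parameter-counting argument with sub-cases, not a dimension estimate on the decomposable locus. In Part~1, sink/source reflections cannot connect all orientations of a graph whose underlying graph has cycles (the cyclic orientation of a two-cycle has no admissible vertex), and they say nothing at loops; this is why Kac's proof of orientation-independence instead identifies absolutely indecomposable representations with $\GL_d(\FF_q)$-orbits having unipotent stabilisers and counts these, a count visibly insensitive to orientation, as indicated in the text.
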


For the polynomial behaviour of $\cA_{Q,d}(q)$ it suffices to prove that the number $\cI_{Q,d}(q)$ of isomorphism classes of indecomposable $d$-dimensional $\FF_q$-representations of $Q$ is given by a~polynomial in~$q$ by using standard reductions involving Galois descent. By the Krull--Schmidt theorem and induction on $d$, it then suffices to show the count $\cM_{Q,d}(q)$ of isomorphism classes of $d$-dimensional $\FF_q$-representations of $Q$ is polynomial in~$q$. Kac computes $\cM_{Q,d}(q)$ using Burnside's theorem, where one must sum over all conjugacy classes of $\GL_d$ for all $d$ by enumerating all possible Jordan normal forms using polynomials (giving the splitting field) and partitions (giving the sizes of the Jordan blocks). He then deduces the polynomial behaviour of~$\cM_{Q,d}(q)$ (and thus~$\cA_{Q,d}(q)$). Kac proves the independence of the orientation of $Q$ using reflection functors and the fact that indecomposable representations correspond to orbits in~$\Rep$ with unipotent stabiliser group.

Hua \cite{Hua} provided more explicit formulae for the polynomials $\cA_{Q,d}(q)$ by considering gene\-ra\-ting functions for these counts, where one sums over all dimension vectors by introducing formal variables $\{X_v ; v \in V \}$. For each $d = (d_v)_{v \in V}$, we write $X^d = \prod\limits_{v \in V} X_v^{d_v}$; then the Krull--Schmidt theorem for $\crep(Q,k)$ gives a formal identity
\begin{gather}\label{gen series}
 \sum_{d \in \NN^V}\cM_{Q,d}(q)X^d = \prod_{ d \in \NN^V \ \{ 0 \}} \big(1 - X^d\big)^{-\cI_{Q,d}(q)}.
\end{gather}

For very simple quivers and low dimension vectors, it is possible to directly calculate $\cA_{Q,d}(q)$.

\begin{exer}
For each of the following quivers and dimension vectors, calculate the Kac polynomial $\cA_{Q,d}(q)$:
\begin{enumerate}\itemsep=0pt
\item[a)] The Jordan quiver with dimension vector $n \in \NN$.
\item[b)] The 2-arrow Kronecker quiver with dimension vector $d = (1,1)$.
\end{enumerate}
\end{exer}

\subsection[Specialisation and relating the cohomology of the special fibre and general fibre]{Specialisation and relating the cohomology of the special fibre\\ and general fibre}

In this final step, we will relate various cohomology groups associated to~$X$ and~$X_0$ in order to prove the main result. In order to pass between the field of complex numbers and various finite fields, we will need to first state some results concerning GIT over the integers and base change. Indeed the affine space $\rep_d\big(\overline{Q}\big)$, the group $\GL_d$, and the moment map $\mu$ are all defined over the integers, and so we can instead consider the above GIT quotients over $\spec \ZZ$ using Seshadri's GIT over a (Nagata) base ring. Although extensions of base fields commute with taking invariants (and thus taking the semistable set and the formation of the GIT quotient commute with base field extensions), the same is not true over rings.

Let us consider the following set up: let $R$ be a finitely generated $\ZZ$-algebra with a maximal ideal $\mathfrak{p} \subset R$ such that $R/\mathfrak{p} \cong \FF_q$ and fix an embedding $R \hookrightarrow \CC$. Then for a variety $\cZ$ over $S:=\spec R$, we can construct by base change:
\begin{enumerate}\itemsep=0pt
\item[1)] an $\FF_q$-variety $Z:= \cZ \times_{\spec R} \spec \FF_q$ (the reduction of $\cZ$ mod $q$) and
\item[2)] a complex variety $Z_{\CC}:=\cZ \times_{\spec R} \spec \CC$.
\end{enumerate}
Now suppose that $\cG$ is a reductive group scheme over $S$ acting on $\cZ$ with respect to an ample linearisation, then we want to know whether formation of the GIT quotient commutes with these various base changes. In fact, for our purposes, it suffices to understand this for $R = \ZZ_N=\ZZ \big[\frac{1}{N}\big]$ where $N \in \ZZ$ such that $p \nmid N$ and so we can apply the following result.

\begin{Lemma}[{\cite[Appendix B]{CBVdB}}]For $S:= \spec \ZZ_N$, we let $\cG$ be a reductive group scheme over~$S$ acting on a quasi-projective $S$-scheme $\cZ$ with respect to an ample linearisation. Then there is a non-empty open subscheme $U \subset S$ over which the formation of the GIT semistable set and GIT quotient commutes with base change; that is, for all points $s\colon \spec k \ra U$, we have
\begin{gather*} \cZ^{\text{\rm ss}} \times_{S} k = (\cZ \times_{S} k)^{\text{\rm ss}} \qquad \text{and} \qquad (\cZ/\!/ \cG) \times_{S} k \cong (\cZ \times_S k)/\!/(\cG \times_S k). \end{gather*}
\end{Lemma}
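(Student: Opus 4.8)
The plan is to reduce the statement to a question about rings of invariants and then ``spread out'' over $S=\spec\ZZ_N$ the good behaviour that holds automatically over the generic fibre; the full bookkeeping is carried out in \cite[Appendix~B]{CBVdB}. Write $\cL$ for the ample line bundle carrying the $\cG$-linearisation and $\pi_S\colon\cZ\to S$ for the structure morphism. By construction $\cZ/\!/\cG=\proj\big(\cA^{\cG}\big)$, where $\cA^{\cG}=\bigoplus_{n\ge0}\big(\pi_{S*}\cL^{\otimes n}\big)^{\cG}$, and $\cZ^{\mathrm{ss}}$ is the union of the non-vanishing loci of global invariant sections of positive degree. So it suffices to exhibit a non-empty open $U\subseteq S$ over which (i)~$\cA^{\cG}$ is a finitely generated flat $\cO_U$-algebra, (ii)~the formation of $\cA^{\cG}$ commutes with base change to the points of $U$, and (iii)~the same holds for the ideal of $\cA^{\cG}$ generated by the positive-degree invariants. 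Replacing $\cZ$ by a $\cG$-equivariant projective completion, I may assume each graded piece $\cA_n:=\pi_{S*}\cL^{\otimes n}$ is a coherent $\cO_S$-module.

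The key structural input is Seshadri's geometric invariant theory over a Nagata base, which gives that $\cA^{\cG}$ is a \emph{finitely generated} $\cO_S$-algebra: I fix homogeneous invariant generators, of degrees at most some $d_1$, together with a finite set of relations among them. This collapses everything into finitely much data living in degrees $\le N_0$ for some $N_0\ge d_1$ — once I control the sheaves $\cA_n^{\cG}$ for $n\le N_0$ and the finitely many product maps between them, the whole graded algebra and the positive-degree ideal are under control.

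Next I would analyse each $\cA_n^{\cG}$ degreewise. Using that $\cO(\cG)$ is the increasing union of finite locally free $\cG$-submodules, for each fixed $n$ there is such a submodule $V$ with $\cA_n^{\cG}=\ker\big(c_n\colon\cA_n\to\cA_n\otimes_{\cO_S}V\big)$, where $c_n$ is the difference of the coaction and the unit, so $\cA_n^{\cG}$ is the kernel of a map of coherent $\cO_S$-modules. By generic flatness there is a dense open $U_n\subseteq S$ over which the image and cokernel of $c_n$ are flat; hence over $U_n$ the sheaf $\cA_n^{\cG}$ is locally free of the generic rank, its formation commutes with every base change, and the same holds for the product maps $\cA_i^{\cG}\otimes\cA_j^{\cG}\to\cA_{i+j}^{\cG}$ with $i+j\le N_0$. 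Setting $U:=\bigcap_{n\le N_0}U_n$ — a \emph{finite} intersection, hence still open and non-empty (it contains the generic point) — I obtain, for every point $s\colon\spec k\to U$, that $(\cA\otimes_{\cO_S}k)^{\cG\times_Sk}$ is generated in degrees $\le d_1$ by the reductions of the chosen generators and coincides with $\cA^{\cG}\otimes_{\cO_S}k$. Taking $\proj$ and intersecting the non-vanishing loci of the invariant sections then yields the two displayed identities, first for $R=\ZZ_N$ and then by restriction for the fields $k$ in the statement.

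The step I expect to be the main obstacle is exactly the passage from the \emph{degreewise} base-change statements to the \emph{global} one. Each degree supplies only one open $U_n$, and a naive approach would need the intersection of infinitely many of these, which could be empty; so the whole argument rests on the uniform bound $d_1$ on generator degrees from Seshadri's finite-generation theorem, combined with the generic-flatness/semicontinuity statement that the Hilbert function of the invariant ring is locally constant near the generic point. Equivalently, the heart of the matter is checking that geometric reductivity in characteristic $p$ does not, for $p$ outside a finite set, create ``extra'' invariants or force generators in higher degree than are needed in characteristic zero; granting this, passing to $\proj$ and to the non-vanishing loci of the invariant sections is formal.
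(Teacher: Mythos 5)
A preliminary remark: the paper itself does not prove this lemma but quotes it from \cite{CBVdB}, so your proposal can only be measured against the strategy of that appendix; your general shape (Seshadri's GIT over a Nagata base for finite generation of the graded invariant algebra, degreewise description of invariants as kernels of maps of coherent sheaves, generic flatness, then Proj and non-vanishing loci) is indeed the natural route and matches the spirit of the cited source.

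The genuine gap is the one you flag yourself and then ``grant''. Your degreewise argument produces one open $U_n$ per degree and controls base change of invariants only in degrees $n\le N_0$, and the uniform bound $d_1$ on generators of $\cA^{\cG}$ does not close the gap: it bounds generators of the invariants computed over $S$, but the problem lives in the fibrewise invariant rings $(\cA\otimes_{\cO_S}k)^{\cG_k}$. Nothing you prove excludes that, for infinitely many primes $p$, the fibre over $\FF_p$ acquires invariant sections in degrees $>N_0$ that are not reductions of sections of $\cA^{\cG}$; such sections would a priori enlarge the semistable locus of the fibre and alter the Proj, so the two displayed identities do not follow. Likewise, ``the Hilbert function of the invariant ring is locally constant near the generic point'' is itself a degree-by-degree assertion (one open set per degree), so it cannot be invoked to treat all degrees simultaneously; and the cokernel of the full coaction-minus-unit map is only a module over $\cA^{\cG}$, not a finitely generated module over any finitely generated $\cO_S$-algebra, so generic flatness cannot be applied to it wholesale. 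Closing exactly this uniformity-in-$p$ step is the substance of \cite[Appendix~B]{CBVdB}: one needs geometric reductivity over the base in its quantitative form (power surjectivity: every fibrewise invariant has a $p^m$-th power in the image of $\cA^{\cG}\otimes k \ra (\cA\otimes k)^{\cG_k}$), which already shows the semistable loci agree and that the comparison map of quotients is a finite universal homeomorphism on every fibre, followed by an argument (e.g.\ generic flatness plus geometric reducedness/normality of $\cZ/\!/\cG \ra S$, or spreading out the finitely generated invariant ring of the generic fibre) to see this homeomorphism is an isomorphism away from finitely many primes. As written, your proof establishes the degreewise statements and the finite-generation bookkeeping, but the decisive step is assumed rather than proved. (Smaller, fixable points: passing to an equivariant projective completion changes the section modules and hence the definition of the semistable locus for quasi-projective $\cZ$, and the submodule $V\subset\cO(\cG)$ should be chosen with torsion-free cokernel so that $V\otimes k\ra\cO(\cG_k)$ remains injective.)
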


We note that an open subset $U \subset S:=\spec \ZZ_N$ has the form $U = \spec \ZZ_{M}$ for some $N | M$, and this means we just need to replace $N$ by a sufficiently large multiple. If moreover $p \nmid N$, then we can base change from $S= \spec \ZZ_N$ to $\FF_p$ when applying the above result. More precisely, for a variety $\cZ$ over $S$ we have the following base changes
\begin{gather*}
\xymatrix{Z_{\overline{\FF_p}} \ar[d] \ar[r] & Z_{\FF_{p}} \ar[r] \ar[d] &\cZ \ar[d]& Z_{\CC} \ar[l] \ar[d]\\
\spec \overline{\FF_p} \ar[r] & \spec \FF_{p^r} \ar[r] & S & \spec \CC \ar[l]}
\end{gather*}
and provided $N$ is sufficiently large these base changes all commute with the formation of GIT quotients and semistable sets by the above lemma.

We will also need the following preliminary result concerning the smoothness of $X_0$ and $X$ over finite fields of large characteristic.

\begin{Lemma}\label{lem smooth large char}Let $\theta$ be generic with respect to $d$, then there is a non-empty open subset $U \subset \spec \ZZ$ over which the morphism $f\colon \mathfrak{X} \ra \AA^1$ is smooth.
\end{Lemma}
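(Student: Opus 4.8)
The plan is to reduce the statement, using base change for geometric invariant theory over $\spec\ZZ$, to a single infinitesimal fact — that the moment map $\mu$ is submersive at every $\theta$-stable representation of $\ov{Q}$ — and then to transport smoothness along the principal $\G$-bundle $\mu^{-1}(L)^{\theta\text{-ss}}\to\mathfrak{X}$, whose existence comes for free from genericity of $\theta$.

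First I would fix the open subset $U\subset\spec\ZZ$. I take $U$ to be the intersection of: the open subset over which the formation of the semistable locus and of the GIT quotient $\mathfrak{X}=\mu^{-1}(L)/\!/_{\chi_\theta}\GL_d$ commutes with base change, furnished by the base-change lemma above (\cite[Appendix~B]{CBVdB}); the complement of the prime $2$; and the complement of the finitely many primes dividing one of the nonzero integers $\theta\cdot d'$ with $0\neq d'<d$ (the last condition, exactly as in the proof of Lemma~\ref{lemma stab trivial}, ensures that over a residue field of $U$ the fibres $\mu^{-1}(t\theta)$ with $t\neq 0$ carry no destabilising subrepresentations; for this lemma it is really only the base-change condition that is essential).

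Next I would assemble the geometric inputs, all checked after base change to the algebraic closure of a residue field of $U$. Since $\theta$ is generic with respect to $d$, $\theta$-semistability and $\theta$-stability agree for $d$-dimensional representations of $\ov{Q}$, so every point of $\mu^{-1}(L)^{\theta\text{-ss}}$ is $\theta$-stable, hence simple, hence has stabiliser exactly $\Delta\subset\GL_d$; therefore $\G$ acts freely on $\mu^{-1}(L)^{\theta\text{-ss}}$ and, by the choice of $U$, the quotient map $\mu^{-1}(L)^{\theta\text{-ss}}\to\mathfrak{X}$ is a principal $\G$-bundle, in particular smooth and surjective. Moreover, at a $\theta$-stable $X$ the infinitesimal lifting property $d_X\mu(\eta)\cdot B=\omega(B_X,\eta)$ together with the non-degeneracy of the Liouville form $\omega$ of~\eqref{Liouville form} shows that the image of $d_X\mu$ is the annihilator of $\Lie\Stab_{\GL_d}(X)=\Lie\Delta$, which under the Killing-form identification is the trace-zero subalgebra $\LieG$; as $\mu$ already takes values in $\LieG$, this says precisely that $d_X\mu\colon\rep_d(\ov{Q})\to\LieG$ is surjective, i.e.\ $\mu$ is smooth at $X$ (this is the smoothness of the moment-map fibre underlying Crawley-Boevey's analysis in \cite{CBmoment}, and it is characteristic-free).

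Finally I would put these together. Identifying $\mu^{-1}(L)$ with the zero fibre of $\nu\colon\rep_d(\ov{Q})\times\AA^1\to\LieG$, $(X,t)\mapsto\mu(X)-t\theta$ (an isomorphism because $\theta\neq 0$, under which the composite $\mu^{-1}(L)^{\theta\text{-ss}}\to\mathfrak{X}\to\AA^1$, the second arrow being $f$, is the restriction of the second projection), the differential $d_{(X,t)}\nu(v,s)=d_X\mu(v)-s\theta$ is surjective at every point lying over a $\theta$-stable $X$; hence $\mu^{-1}(L)^{\theta\text{-ss}}$ is smooth over $U$ and the projection $\mu^{-1}(L)^{\theta\text{-ss}}\to\AA^1$ is submersive, so smooth. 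Since $\mu^{-1}(L)^{\theta\text{-ss}}\to\mathfrak{X}$ is a smooth surjection and smoothness of a morphism is fppf-local on the source, $f\colon\mathfrak{X}\to\AA^1$ is smooth over $U$. The step I expect to be the main obstacle is precisely the passage to $U$: GIT quotients do not commute with base change over $\spec\ZZ$, so the reduction to the fibrewise/infinitesimal picture genuinely hinges on the Appendix~B lemma, and some care is needed to see that over $U$ both $\mathfrak{X}$ and its principal-bundle structure are obtained by base change from the situation over $\spec\ZZ_N$; once that is granted, the differential-geometric part of the argument is routine.
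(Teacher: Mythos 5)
Your proposal is correct and takes essentially the same route as the paper's proof: genericity forces semistability to coincide with stability, so $\G$ acts freely on $\mu^{-1}(L)^{\theta\text{-s}}$, the triviality of the infinitesimal stabiliser together with non-degeneracy of $\omega$ makes $\mu$ (hence $\mu^{-1}(L)^{\theta\text{-s}}\ra\AA^1$) smooth, and smoothness then descends along the free $\G$-quotient to $f\colon\mathfrak{X}\ra\AA^1$. The only (inessential) difference is the bookkeeping over $\spec\ZZ$: the paper proves smoothness over $\ov{\QQ}$ and spreads out to an open $U$ by inverting finitely many primes, whereas you fix $U$ in advance via the base-change lemma and argue fibrewise over its residue fields.
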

\begin{proof}
It suffices to prove that $f$ is smooth after base changing to $k=\overline{\QQ}$, as by a spreading out argument it is sufficient to prove that $f$ is smooth over $\QQ$ (as then the same statement holds over $\ZZ$ after inverting finitely many primes) and smoothness can be checked after any field extension. Since $\theta$ is generic, the notions of $\theta$-stability and $\theta$-semistability for $d$-dimensional $k$-representations coincide. As any $\theta$-stable $k$-representation of $\overline{Q}$ is simple, we see that $\G$ acts freely on $\rep_d\big(\overline{Q}\big)^{\theta\text{\rm -s}}$. Hence the infinitesimal action at these points is trivial and so it follows that the restriction of the moment map $\mu$ to $\rep_d\big(\overline{Q}\big)^{\theta\text{\rm -s}}$ is smooth, as $\mu$ lifts the infinitesimal action. Consequently, for the line $L = k\theta$ in the Lie algebra of $\G$, we see that the induced morphism $\mu^{-1}(L)^{\theta\text{\rm -s}} \ra L$ is smooth, and as $\G$ acts freely on $\mu^{-1}(L)^{\theta\text{\rm -s}}$, the $\G$-quotient $\mu^{-1}(L)^{\theta\text{\rm -s}} \ra \mathfrak{X}$ is also smooth. Hence, we deduce that $f\colon \mathfrak{X} \ra L\cong \AA^1$ is also smooth over~$k$.
\end{proof}

We are now in a position to complete the proof. The first goal is to relate the point count of~$X_0$ and~$X$ in large characteristic.

\begin{prop}\label{prop point counts agree}For a finite field $\FF_q$ of sufficiently large characteristic $p$, we have
\begin{gather*} |X_0(\FF_q) | = |X(\FF_q)|. \end{gather*}
\end{prop}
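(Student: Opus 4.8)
The plan is to run Steps~1 and~6 of the strategy: use the hyperk\"ahler input only through the topological triviality of $\mathfrak X_\CC \ra \AA^1_\CC$ established above, and then transfer to finite fields by a spreading-out argument together with the Grothendieck--Lefschetz trace formula. Since $\RepQbar$, $\GL_d$ and $\mu$ are defined over $\ZZ$, the family $f\colon \mathfrak X \ra \AA^1 = L$ is defined over $\spec \ZZ$ and carries the dilating $\GG_m$-action, which scales $L$ with some nonzero weight $w$ and makes $f$ a $\GG_m$-equivariant morphism. First I would fix $N$ so that, over $\spec \ZZ_N$, the formation of all the GIT semistable loci and quotients involved commutes with base change (by \cite[Appendix~B]{CBVdB}) and so that $f$ is smooth (by Lemma~\ref{lem smooth large char}, enlarging $N$). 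Then for every prime $p \nmid N$ and power $q$ of $p$: the fibre of $f_{\FF_q}$ over $0$ (resp.\ over $1$) is the reduction mod $q$ of $X_0$ (resp.\ of $X$), the morphism $f_{\FF_q}$ is smooth, and the fibres of $f_\CC$ over $0$ and $1$ are $X_{0,\CC}$ and $X_\CC$.

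Next I would study the sheaves $\mathcal F^i := R^i f_! \QQ_\ell$ on $\AA^1_{\ZZ_N}$. The base-change theorem for $Rf_!$ identifies their stalks with the compactly supported cohomologies of the fibres, so the stalks of $\mathcal F^i_{\ov{\FF_p}}$ at $0$ and $1$ are $H^i_c(\ov{X_0},\QQ_\ell)$ and $H^i_c(\ov X,\QQ_\ell)$. By the comparison theorem and the topological triviality of $\mathfrak X_\CC \ra \AA^1_\CC$ (\cite[Lemma~2.3.3]{CBVdB}), the restriction $\mathcal F^i_{\ov\QQ}$ is a \emph{constant} $\QQ_\ell$-sheaf on $\AA^1_{\ov\QQ}$. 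Since the locus in $\AA^1_{\ZZ_N}$ where $\mathcal F^i$ fails to be lisse is closed and disjoint from $\AA^1_{\ov\QQ}$, it is supported over finitely many primes; enlarging $N$ to invert these (for all relevant $i$), I may assume $\mathcal F^i$ is lisse on all of $\AA^1_{\ZZ_N}$, hence $\mathcal F^i_{\ov{\FF_p}}$ is lisse on $\AA^1_{\ov{\FF_p}}$ for all $p \nmid N$.

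The delicate point is that a lisse sheaf on $\AA^1_{\ov{\FF_p}}$ need not be constant, unlike over $\ov\QQ$; this is where the scaling action is used. Being $\GG_m$-equivariant, $\mathcal F^i_{\ov{\FF_p}}$ is a $\GG_m$-equivariant lisse sheaf on $\AA^1_{\ov{\FF_p}}$ for the weight-$w$ scaling action. For $p \nmid w$, restriction to $\GG_m$ together with the fact that the orbit map of $1$ is the surjective power map $t \mapsto t^w$ forces $\mathcal F^i_{\ov{\FF_p}}|_{\GG_m}$ to be a direct sum of Kummer sheaves $\mathcal L_\chi$ with $\chi^w = 1$; but $\mathcal F^i_{\ov{\FF_p}}$ is also lisse across $0 \in \AA^1$, which is impossible for a nontrivial $\mathcal L_\chi$, so $\mathcal F^i_{\ov{\FF_p}}$ is constant. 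Thus, for $p \gg 0$ and $q$ a power of $p$, the sheaf $\mathcal F^i$ on $\AA^1_{\FF_q}$ is geometrically constant, hence is the pullback of a $\Gal(\ov{\FF_q}/\FF_q)$-representation, and the geometric Frobenius acts in the same way on every stalk at an $\FF_q$-rational point; taking the stalks at $0$ and $1$ gives $\Gal(\ov{\FF_q}/\FF_q)$-equivariant isomorphisms $H^i_c(\ov{X_0},\QQ_\ell) \cong H^i_c(\ov X,\QQ_\ell)$ for all $i$. Applying the Grothendieck--Lefschetz trace formula to the smooth $\FF_q$-varieties $X_0$ and $X$ then yields
\begin{gather*} |X_0(\FF_q)| = \sum_i (-1)^i \tr\big(\Fr \mid H^i_c(\ov{X_0},\QQ_\ell)\big) \\ = \sum_i (-1)^i \tr\big(\Fr \mid H^i_c(\ov X,\QQ_\ell)\big) = |X(\FF_q)|. \end{gather*}

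The main obstacle is exactly this third step: topological triviality over $\CC$ controls only the tame monodromy of the cohomology sheaves, so one must rule out wild ramification (at $\infty$) appearing in positive characteristic uniformly in $p$ --- this is what the combination of spreading out, which handles all but finitely many primes at once, and the $\GG_m$-equivariance of the family accomplishes. An alternative to Steps~1 and~6 that sidesteps these $\ell$-adic subtleties is Nakajima's direct argument via the Bia{\l}ynicki-Birula decomposition of the total space $\mathfrak X$, given in the appendix to \cite{CBVdB}.
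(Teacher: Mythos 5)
Your proposal is correct and follows the same skeleton as the paper's proof (topological triviality of $\mathfrak X_\CC \ra \AA^1_\CC$, the comparison theorem, spreading out over $\ZZ_N$ with GIT base change and smoothness, and finally the Grothendieck--Lefschetz trace formula), but it differs in how the central transfer step is justified. The paper simply invokes ``Deligne's base change result for direct images'' to obtain Frobenius-compatible isomorphisms $H^i_c\big(\ov{X_0},\QQ_\ell\big)\cong H^i_c\big(\ov X,\QQ_\ell\big)$, whereas you make the real subtlety explicit --- constancy of $R^if_!\QQ_\ell$ over $\AA^1_{\ov\QQ}$ does not by itself control the (possibly wild, Artin--Schreier-type) monodromy of a lisse sheaf on $\AA^1_{\ov{\FF_p}}$ --- and you resolve it by exploiting the $\GG_m$-equivariance of the family: an equivariant lisse sheaf on $\GG_m$ for the weight-$w$ action ($p\nmid w$, here $w=2$) is a sum of Kummer sheaves, and lisse-ness across $0$ then forces constancy. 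This is a genuine and correct supplement (in spirit close to Nakajima's appendix, which also leans on the scaling action), and it buys a self-contained argument where the paper's citation leaves the reader to reconstruct the mechanism. Two small remarks. First, your assertion that the non-lisse locus of $\mathcal F^i$ on $\AA^1_{\ZZ_N}$ is disjoint from the generic fibre is not formal from constancy of the restriction to $\AA^1_{\ov\QQ}$ (lisse-ness of a restriction to a non-open subscheme does not place those points in the open lisse locus); it should be justified by a standard limit/spreading-out argument, e.g.\ spreading out the trivializing finite \'etale data (or the isomorphism with a constant sheaf) from $\AA^1_\QQ$ to $\AA^1_{\ZZ_M}$ for some $M$. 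Second, the $\GG_m$ argument can be bypassed entirely: since $\pi_1\big(\AA^1_{\ov\QQ}\big)$ is trivial, geometric constancy over the generic fibre already shows $R^if_!\QQ_\ell\big|_{\AA^1_\QQ}$ is pulled back from $\spec\QQ$, so pulling back along the two sections $0,1$ gives a $\Gal(\ov\QQ/\QQ)$-equivariant isomorphism of the generic stalks of $\sigma_0^*R^if_!\QQ_\ell$ and $\sigma_1^*R^if_!\QQ_\ell$ on $\spec\ZZ_N$; both sheaves are lisse at all but finitely many primes, so specialization gives equal Frobenius traces for $p\gg0$ without discussing monodromy on $\AA^1_{\ov{\FF_p}}$ at all. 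Either way the conclusion, combined with Lemma~\ref{lem smooth large char} and the trace formula, is exactly the paper's.
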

\begin{proof}Using the (topological) triviality of the family $\mathfrak{X} \ra \AA^1$ over $\CC$ and the comparison theorem together with Deligne's base change result for direct images, we deduce that for $p \gg 0$ and $\ell \neq p$, there are isomorphisms
\begin{gather*} H_c^i\big(X \times_{\FF_q} {\overline{\FF_p}},\QQ_{\ell}\big) \cong H_c^i\big(X_0 \times_{\FF_q} {\overline{\FF_p}},\QQ_{\ell}\big)\end{gather*}
in $\ell$-adic cohomology that are compatible with the Frobenius endomorphisms. By applying the Grothendieck--Lefschetz trace formula to both $X$ and $X_0$, which are both smooth $\FF_q$-varieties in large characteristic by Lemma~\ref{lem smooth large char}, we deduce the claim.
\end{proof}

\begin{rmk}A more direct proof of this result is given by Nakajima as an appendix in~\cite{CBVdB}, which involves comparing the Bia{\l}ynicki-Birula decomposition on the total space of the family~$\mathfrak{X}$ with the decompositions on the fibres of this family.
\end{rmk}

Putting all of the above together, we obtain the proof of Crawley-Boevey and Van den Bergh.

\begin{proof}[Proof of Theorem \ref{main Theorem}]Let $\FF_q$ be a finite field of sufficiently large characteristic $p$ so that the construction of the GIT quotient $\mathfrak{X}$ commutes with base change and the family $\mathfrak{X} \ra \AA^1$ is smooth. By Proposition~\ref{prop point count X} and Theorem~\ref{Theorem kac}, we see that $X$ has polynomial point count given by
\begin{gather*} |X(\FF_q)|= q^{-d}\cA_{Q,d}(q). \end{gather*}
Provided $p \gg 0$, this point count coincides with that of $X_0$ by Proposition~\ref{prop point counts agree}. Since $X_0$ is pure by Propositions~\ref{prop semiproj Gm X0} and~\ref{prop semiproj Gm implies pure}, we deduce that the $\ell$-adic Poincar\'{e} polynomial of the $\FF_q$-variety~$X_0$ for $q = p^r$ and $p$ sufficiently large is given by
\begin{gather}\label{eq ladic PP X0}
\cA_{Q,d}(t) = t^{-e} \sum_{i\geq 0} \dim H^{2i}_c\big(X_{0} \times_{\FF_q} \overline{\FF_p},\QQ_{\ell}\big)t^i,
\end{gather}
where $e = \frac{1}{2} \dim X_0$ and $\ell \neq p$ is prime.

Now consider the family $\mathfrak{X} \ra \AA^1$ over $\spec \ZZ_N$ for sufficiently large $N$ indivisible by $p$, then by base change we can obtain the $\ov{\FF_p}$-variety $X_{0} \times_{\FF_q} \overline{\FF_p}$ and the complex variety $X_{0,\CC}$ and these base changes commute with the formation of the GIT quotient. In particular, the complex variety $X_{0,\CC}$ is defined over $\QQ$ and the $\FF_p$-variety $X_{0}$ is a mod $p$ reduction of this complex variety. By smooth base changes results and the comparison theorem [SGA4:3, Expos\'{e}~XVI, Theorem~4.1], we obtain from~\eqref{eq ladic PP X0} the corresponding equality for the Poincar\'{e} polynomial of the sheaf cohomology of $X_{0,\CC}$ with values in the constant sheaf $\CC$
\begin{gather*} \cA_{Q,d}(q) = q^{-e} \sum_{i\geq 0} \dim H^{2i}_c(X_{0,\CC},\CC)q^i.\end{gather*}
By Poincar\'{e} duality for the smooth variety $X_{0,\CC}$ of dimension $2e$, we deduce
\begin{gather*} \cA_{Q,d}(q) = \sum_{i=0}^e \dim H^{2e - 2i}(X_{0,\CC}(\CC),\CC)q^i, \end{gather*}
where now we switch from sheaf cohomology to the singular cohomology of the analytic varie\-ty~$X_{0,\CC}(\CC)$.
\end{proof}

\subsection{A brief survey of Schiffmann's results for bundles}

Let $X$ be a smooth projective curve over a finite field $\FF_q$. Then the category of coherent sheaves over $X$ is an abelian category of homological dimension 1 with a group homomorphism
\begin{gather*} \cl\colon \ K_0(\cC{\rm oh}(X)) \ra \ZZ^2, \qquad [\cF] \mapsto \cl(\cF) = (\rk(\cF),\deg(\cF)) \end{gather*}
through which the Euler form factors
\begin{gather*} \langle \cE, \cF \rangle = \rk \cE \rk \cF (1-g) + \rk \cE \deg \cF - \rk \cF \deg \cE. \end{gather*}
Moreover, this is a Krull--Schmidt category and so there is naturally a notion of (absolutely) indecomposable objects.

For coprime rank $n$ and degree $d$, Schiffmann discovered an analogous relationship between the count $\cA_{n,d}(X)$ of isomorphism classes of indecomposable vector bundles on $X/\FF_q$ and the Betti cohomology of the moduli space of semistable Higgs bundles on $X$. In fact, the case of vector bundles involves several technical issues which did not arise in the quiver setting:
\begin{enumerate}\itemsep=0pt
\item[1)] For quivers, the count of absolutely indecomposable representations of $Q$ was polynomial in the size of the finite field and was independent of the orientation. Moreover, there was a representation theoretic interpretation: the underlying graph of the quiver determined a~root system and Weyl group, under which the polynomial counting absolutely indecomposable representations was invariant. For bundles on curves, one would like to understand the behaviour of this count as $X$ varies in the moduli space of genus $g$ curves over $\FF_q$ and give a representation theoretic interpretation of the associated polynomial.
\item[2)] There are many more vector bundles than quiver representations: while the stack of quiver representations of fixed dimension vector is a finite type stack, the stack of vector bundles of fixed class is only locally of finite type. Hence, to prove the polynomial behaviour of the count of indecomposable bundles, one cannot uses the Krull--Schmidt theorem and count all vector bundles, as this is infinite.
\item[3)] Although the moduli space of Higgs bundles admits a gauge theoretic construction as a~holomorphic symplectic reduction, we need an algebraic version for working over finite fields. Furthermore, to relate Higgs bundles to indecomposable vector bundles, it is ne\-ces\-sary to employ a similar trick to the above trick of Crawley-Boevey and Van den Bergh: one needs to find a suitable family of algebraic symplectic reductions over the affine line that contains the moduli space of Higgs bundles as the special fibre.
\end{enumerate}

The description of the behaviour of this count as $X$ varies in the moduli space of genus $g$ curves is achieved by Schiffmann in \cite{schiffmann}: he proves that there is a polynomial (depending on $(n,d)$ and the genus $g$ of the curve) in the Weil numbers of a curve over a finite field which gives the counts $\cA_{n,d}(X)$ for any curve $X$ over a finite field by evaluation at the Weil numbers of $X$; moreover, by work of Mellit \cite{mellit}, this polynomial is actually independent of the degree $d$. To state this precisely, we recall that the Weil numbers of a genus $g$ smooth projective curve $X/\FF_q$ are the eigenvalues $\sigma_1, \dots , \sigma_{2g}$ of the Frobenius acting on $H^1_{\et}\big(X \times_{\FF_q} \ov{\FF_q}, \ov{\QQ_l}\big)$. If we fix an embedding $\ov{\QQ_l} \hookrightarrow \CC$, then we can view the Weil numbers of $X$ as a tuple of complex numbers of absolute value $q^{\frac{1}{2}}$ and order them as complex conjugate pairs $(\sigma_{2i-1},\sigma_{2i})$ which satisfy $\sigma_{2i-1}\sigma_{2i} = q$ for all $1 \leq i \leq g$. This tuple gives rise to a point in the torus
\begin{gather*} T_g :=\big\{(\alpha_1, \dots , \alpha_{2g}) \in \GG_m^{2n} \colon \sigma_{2i-1}\sigma_{2i} = \sigma_{2j-1}\sigma_{2j} \ \forall\, 1\leq i,j \leq g \big\}\end{gather*}
and the natural action of $W_g :=S_g \ltimes (S_2)^g$ takes care of the choices in the above ordering into complex conjugate pairs. Let $\pi \colon T_g(\CC) \ra T_g(\CC)/W_g$ denote the quotient map and define $\sigma_X:= \pi(\sigma_1, \dots , \sigma_{2g})$ to be the image of the Weil numbers of~$X$.

Let $R_g:= \QQ[z_1, \dots, z_{2g}\colon z_{2i-1}z_{2i} = z_{2j-1}z_{2j} \ \forall\, 1 \leq i \leq g]^{W_g}$. Then we can evaluate any element in $R_g$ at $\sigma_X$ for any genus $g$ smooth projective curve $X$ over a finite field. In genus $0$, we set $R_0 = \QQ[q^{\pm}]$.

\begin{Theorem}[Schiffmann \cite{schiffmann}]
For a fixed genus $g$ and pair $(n,d) \in \NN \times \ZZ$, there is a unique element $\cA_{g,n,d} \in R_g$ such that for any smooth projective geometrically connected curve $X$ of genus $g$ over a finite field, we have
\begin{gather*} \cA_{g,n,d} (\sigma_X) = \cA_{n,d}(X).\end{gather*}
\end{Theorem}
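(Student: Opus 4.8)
The plan is to reduce the statement to an explicit formula for the count $\cA_{n,d}(X)$ in terms of the zeta function of $X$, and then to invoke Zariski density of Weil numbers for uniqueness. Write $Z_X(t) = P_X(t)/\big((1-t)(1-qt)\big)$ for the zeta function of $X/\FF_q$, with $P_X(t) = \prod_{i=1}^{2g}(1-\si_i t)$ the numerator recording the Weil numbers; by the functional equation the pairs $(\si_{2i-1},\si_{2i})$ satisfy $\si_{2i-1}\si_{2i}=q$. The key point is that every special value of $Z_X$, and more generally every polynomial in $q$ and the $\si_i$ that is invariant under permuting and swapping these conjugate pairs, is the value at $\si_X$ of an element of $R_g$ (a priori, of its fraction field).

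First I would produce the formula for $\cA_{n,d}(X)$. Since $\cC{\rm oh}(X)$ is a Krull--Schmidt category, the generating series of the ``masses'' $\mathrm{vol}\big(\mathrm{Bun}_{n',d'}\big)(\FF_q) := \sum_{[E]}|\Aut E|^{-1}$ of the stacks of all vector bundles admits, exactly as in Hua's product formula for quivers, an expansion over indecomposable bundles, so that applying a plethystic logarithm recovers the numbers $\cA_{n,d}(X)$; the subtlety absent in the quiver case is that this must be set up in a completion and its convergence established, which is done by organising everything according to Harder--Narasimhan type, using that semistable bundles of fixed invariants form a bounded family (and likewise indecomposable bundles of fixed class, so that $\cA_{n,d}(X)$ is finite). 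Next I would substitute the closed-form evaluations of these masses: by the Siegel/Harder--Narasimhan/Atiyah--Bott/Desale--Ramanan formula the total masses, and the semistable masses entering the Harder--Narasimhan recursion, are fixed rational functions of $q$ and of finitely many special values of $Z_X$, hence of $q$ and $\si_1,\dots,\si_{2g}$, manifestly $W_g$-invariant. Feeding these into the plethystic-logarithm formula expresses $\cA_{n,d}(X)$ as the value at $\si_X$ of a $W_g$-symmetric rational function of the Weil numbers.

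The main obstacle is to make the Hall-algebra computation behind this plethystic inversion genuinely effective and then to prove \emph{polynomiality}: one must show that, although the masses carry denominators such as $q-1$ and $Z_X$-factors, the resulting expression for $\cA_{n,d}(X)$ is an honest polynomial in the $\si_i$, so that $\cA_{g,n,d}$ lies in the polynomial ring $R_g$ and not merely in its fraction field. This is the technical heart of Schiffmann's work and rests on his structure theory for the Hall algebra of a curve, an Eisenstein/residue formula, and delicate estimates ensuring the relevant infinite series converge and can be manipulated term by term. (As a sanity check, in rank one $\cA_{1,d}(X) = \#\Pic^d(X)(\FF_q) = P_X(1)\in R_g$, independent of $d$; Mellit's later theorem shows this $d$-independence persists for all $n$.)

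Finally, for uniqueness it suffices to observe that the image of $X \mapsto \si_X$, as $\FF_q$ ranges over all finite fields and $X$ over all smooth projective geometrically connected genus-$g$ curves over $\FF_q$, is Zariski dense in the irreducible variety $T_g(\CC)/W_g$: one obtains enough distinct points by dimension counting on $\cM_g$ together with varying $q$ for $g\ge 2$, and directly for $g\le 1$ (Honda--Tate for $g=1$; for $g=0$ the space $T_0$ is a point and $R_0=\QQ[q^{\pm1}]$ is determined by its values at infinitely many prime powers). An element of $R_g$ is a regular function on $T_g/W_g$, so two such functions taking the same value $\cA_{n,d}(X)$ at every $\si_X$ must coincide, which pins down $\cA_{g,n,d}$ uniquely.
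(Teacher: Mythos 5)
The step that carries all the weight in your argument --- ``the generating series of the masses $\sum_{[E]}|\Aut E|^{-1}$ admits, exactly as in Hua's product formula for quivers, an expansion over indecomposable bundles, so that applying a plethystic logarithm recovers $\cA_{n,d}(X)$'' --- is not a theorem, and it is the point where the paper explicitly says the argument must go differently. Hua's product formula is the unweighted Krull--Schmidt identity $\sum_d \cM_{Q,d}(q)X^d=\prod_{d\neq 0}(1-X^d)^{-\cI_{Q,d}(q)}$, i.e.\ a multiset count of isomorphism classes with no automorphism weights. There is no analogous universal identity for automorphism-weighted (stacky) counts: for a decomposable object $\bigoplus_i I_i^{\oplus m_i}$ the order of $\Aut$ involves the spaces $\Hom(I_i,I_j)$ between non-isomorphic indecomposable summands, which are not functions of the discrete invariants alone, so the mass generating series is not determined by the numbers of indecomposables and cannot be inverted by a plethystic logarithm. (Already for the Jordan quiver the naive relation between $\sum_d |\rep_d|/|\Gl_d(\FF_q)|\,X^d$ and the Kac polynomials fails, as a direct check at $d=2$ shows.) This is exactly the warning in the survey: $\cA_{n,d}(X)$ is \emph{not} computed from the stacky volume of the stack of sheaves (the quantity to which the Siegel/Harder and Harder--Narasimhan recursion formulas apply), so feeding those closed-form masses into a plethystic inversion does not produce the count of indecomposables.

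What Schiffmann actually does, and what the paper sketches, is different in structure: since the number of isomorphism classes of all sheaves of a fixed class is infinite, he truncates to the subcategory of positive coherent sheaves (HN subquotients of positive degree), whose substack of fixed class is of finite type and contains all indecomposables after a twist; he then runs the Kac--Hua style argument there --- Galois descent to pass between indecomposable and absolutely indecomposable, and a Burnside-type orbit count organised by a unipotent reduction and a partition into Jordan types --- with the zeta-function/Weil-number input entering through volume computations of these refined strata via his Hall-algebra and Eisenstein-series/residue analysis, which is also where polynomiality in the $\sigma_i$ is established. Your uniqueness argument (Zariski density of the points $\sigma_X$ in $T_g(\CC)/W_g$ as $q$ and $X$ vary) is the right kind of statement but is only gestured at (``dimension counting on $\cM_g$'' does not by itself give density of Frobenius eigenvalue tuples); still, that is the minor part --- the essential gap is the unjustified mass-to-indecomposable plethystic identity at the heart of your existence proof.
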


Mellit \cite{mellit} showed this polynomial $\cA_{g,n,d}$ is actually independent of the degree $d$ and so we can write simply $\cA_{g,n}$; this proof is combinatorial and does not give a geometric reason for this independence. Moreover, these polynomials exists in rank $n = 0$ and count absolutely indecomposable torsion sheaves.

In fact, Schiffmann also provides a representation theoretic interpretation of $W_g$, $T_g$ and~$R_g$: the Frobenius $\Fr_X$ on $H^1_{\et}\big(X \times_{\FF_q} \ov{\FF_q}, \ov{\QQ_l}\big) \cong \ov{\QQ_l}^{2g}$ is an element of the general symplectic group $\GSp\big(H^1_{\et}\big(X \times_{\FF_q} \ov{\FF_q}, \ov{\QQ_l}\big)\big)$ where we equip this vector space with the intersection form. The character ring of this general symplectic group is~$R_g$ and $(T_g,W_g)$ are a maximal torus and associated Weyl group of $\GSp(2g,\ov{\QQ_l})$.

Schiffmann's proof of this theorem gives rise to explicit (but complicated) formulae for these polynomials; these were later substantially combinatorially simplified by Mellit~\cite{mellit}. The fact that the number of absolutely indecomposable vector bundles over~$X$ of fixed class is finite follows from the observation that any sufficiently unstable coherent sheaf is decomposable as its Harder--Narasimhan filtration must split in some place, and so the stack of absolutely indecomposable vector bundles is a constructible substack of a finite type stack. Therefore, $\cA_{n,d}(X)$ also counts isomorphism classes of absolutely indecomposable coherent sheaves over~$X$ of this class. The standard arguments for quivers involving Galois cohomology and the Krull--Schmidt theorem still apply to reduce the problem to counting all isomorphism classes of coherent sheaves on~$X$ of this class, but this number is infinite for $n > 0$. In fact, we should also point out that this count is not the same as the stacky volume of the stack of coherent sheaves, where one weights the count by the inverses of the size of the automorphism groups (this stacky volume has a~very elegant formula involving the Zeta function of the curve~\cite{Harder}). Instead, Schiffmann uses a suitable truncation of the category of coherent sheaves on~$X$ given by the subcategory of positive coherent sheaves (i.e., sheaves whose HN subquotients have positive degrees). Similarly to the case of quivers, one can perform a unipotent reduction and partition the stack of positive sheaves by Jordan normal types.

Furthermore, the coefficients of these polynomials satisfy some form of positivity. For this, Mozgovoy and Schiffmann~\cite{MS, schiffmann} relate the polynomial $\cA_{g,n}(t, \dots, t) \in \QQ[t]$ with the (compactly supported) Poincar\'{e} polynomial of moduli spaces of semistable Higgs bundles over a genus $g$ smooth complex projective curve for any $d$ coprime to $n$ (recall that $\cA_{g,n,d} = \cA_{g,n}$ is independent of $d$ by work of Mellit~\cite{mellit}).

\begin{Theorem}[Schiffmann \cite{schiffmann}]Let $X_{\CC}$ be a smooth complex projective curve of genus $g$ and $\cH_{X_\CC}^{\text{\rm ss}}(n,d)$ denote the moduli space of semistable Higgs bundles over~$X_{\CC}$ of coprime rank and degree. Then
\begin{gather*} \sum_{i \geq 0} H^i_c\big(\cH_{X_\CC}^{\text{\rm ss}}(n,d),\QQ\big)t^n = t^{2(1+(g-1)n^2)}\cA_{g,n}(t, \dots, t).\end{gather*}
\end{Theorem}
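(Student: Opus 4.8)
The plan is to transport the six--step argument of Crawley--Boevey and Van den Bergh to the bundle setting, with the scaling action on Higgs fields playing the role of the dilation action on $\rep_d(\ov{Q})$. First I would \emph{spread out}: the complex curve $X_\CC$, the coarse moduli space $\cH^{\text{ss}}_{X_\CC}(n,d)$, and the $\GG_m$--action on it scaling the Higgs field are all defined over some finitely generated $\ZZ$--subalgebra $R\subset\CC$, and, after inverting finitely many primes, the formation of the relevant moduli quotient commutes with base change; reducing modulo a maximal ideal $\mathfrak{p}\subset R$ with $R/\mathfrak{p}\cong\FF_q$ produces a smooth projective geometrically connected curve $X/\FF_q$ together with its Higgs moduli space $\cH^{\text{ss}}_X(n,d)$, which is smooth since $(n,d)$ are coprime. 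It then suffices, by the comparison theorem and Poincar\'e duality, to determine the $\ell$--adic compactly supported Betti numbers of $\cH^{\text{ss}}_X(n,d)\times_{\FF_q}\ov{\FF_p}$ for $p\gg 0$.

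\emph{Purity.} The action $(\cE,\Phi)\mapsto(\cE,t\Phi)$ of $\GG_m$ on $\cH^{\text{ss}}_X(n,d)$ is semi--projective: under it, the image of a point under the Hitchin map $h$ is multiplied componentwise by positive powers of $t$ in the affine Hitchin base, so a fixed point must map to the origin and the fixed locus lies in the compact nilpotent cone $h^{-1}(0)$ and is projective, while properness of $h$ gives, via the valuative criterion, the existence of $\lim_{t\to 0}t\cdot(\cE,\Phi)$ in $\cH^{\text{ss}}_X(n,d)$ for every point. By the proposition identifying semi--projectivity with purity, $\cH^{\text{ss}}_X(n,d)$ is cohomologically pure; equivalently, the associated Bia{\l}ynicki--Birula decomposition expresses its compactly supported cohomology in terms of that of the smooth projective fixed locus, whose components include the moduli space of semistable bundles of class $(n,d)$ (the $\Phi=0$ locus) and the remaining components of the nilpotent cone.

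\emph{The point count} is the substantive step and, in my view, the main obstacle. One must express $|\cH^{\text{ss}}_X(n,d)(\FF_q)|$ --- which, in the coprime case, is the number of isomorphism classes of stable Higgs bundles of class $(n,d)$ over $X$ --- in terms of the count $\cA_{n,d}(X)$ of absolutely indecomposable vector bundles of class $(n,d)$. This is the analogue of Crawley--Boevey's lifting theorem, but the clean geometric bijection available for quivers is unavailable because the stack of vector bundles of a fixed class is only locally of finite type. Following Schiffmann, I would (i) introduce a deformation $\mathfrak{H}\to\AA^1$ of moduli of Hitchin--type pairs with $\mathfrak{H}_0=\cH^{\text{ss}}_X(n,d)$, topologically trivial over $\CC$ by a hyperk\"ahler--rotation argument analogous to the quiver case, so that the special and general fibres have equal point count over $\FF_q$ for $p\gg 0$; and (ii) compute the point count of the general fibre in the (spherical) Hall algebra of $X$, truncating to the finite--type substack of \emph{positive} coherent sheaves, performing a unipotent reduction, and partitioning by Jordan type so as to isolate the contribution of indecomposable bundles (using, as for quivers, that an indecomposable sheaf has unipotent automorphism group and that a sufficiently unstable sheaf is decomposable). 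The outcome is an identity of the shape $|\cH^{\text{ss}}_X(n,d)(\FF_q)| = q^{\,e}\,\cA_{n,d}(X)$ with $e=1+(g-1)n^2=\dim_\CC\cM^{\text{ss}}_X(n,d)$; invoking the preceding theorem, $\cA_{n,d}(X)=\cA_{g,n,d}(\sigma_X)$ depends on $X$ only through its tuple $\sigma_X$ of Weil numbers, so the point count is the value at $\sigma_X$ of a fixed element of $R_g$, times $q^{e}$.

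\emph{From the point count to Betti numbers.} By the Grothendieck--Lefschetz trace formula applied to the smooth $\FF_q$--variety $\cH^{\text{ss}}_X(n,d)$, the above count constrains the Frobenius eigenvalues on the $H^i_c$; purity forces each such eigenvalue to have absolute value $q^{i/2}$, while the Bia{\l}ynicki--Birula description (together with Atiyah--Bott's description of the tautological cohomology of the bundle--moduli component) shows that every eigenvalue is a monomial in the $2g$ Weil numbers of $X$ of the expected weight. Since the $2g$ variables of $R_g$ each carry cohomological weight one and track the non--Tate part of the cohomology faithfully, matching eigenvalues with the monomials occurring in the $R_g$--expression identifies $\dim H^i_c$ with the coefficient of $t^i$ in the single--variable specialisation obtained by setting every Weil number equal to $t$ (so that $q$ is replaced by $t^2$); this turns $q^{e}\,\cA_{g,n,d}(\sigma_X)$ into $t^{2e}\,\cA_{g,n}(t,\dots,t)$. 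Transferring back to $\CC$ via the smooth base change and comparison theorems then yields the stated identity, with the prefactor $t^{2(1+(g-1)n^2)}$ coming precisely from $q^{e}|_{q=t^2}$ and $2e=\dim_\CC\cH^{\text{ss}}_{X_\CC}(n,d)$. The technical heart --- and the reason the bundle case is substantially harder than the quiver case --- is step (ii): the Hall--algebra enumeration of indecomposable bundles via the truncated category of positive sheaves, where essentially all of Schiffmann's work is concentrated, with a secondary non--trivial point being the compatibility of the spreading--out and base--change operations with the coarse Higgs moduli functor.
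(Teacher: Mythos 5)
Your bookends are fine and agree with the strategy the paper sketches: purity of $\cH^{\text{ss}}_X(n,d)$ via the semi-projective $\GG_m$-action scaling the Higgs field (properness of the Hitchin map giving the limits and the projectivity of the fixed locus), the observation that over $\FF_q$ coprimality makes rational points into isomorphism classes, and the spreading-out/comparison/Poincar\'e-duality argument converting a pure polynomial-in-Weil-numbers point count into the stated Betti identity with prefactor $t^{2e}$, $e=1+(g-1)n^2$. The genuine gap is in your step (i), which is exactly where the bundle case refuses to imitate the quiver case. There is no ``deformation of moduli of Hitchin-type pairs'' obtained by moving the level of the complex moment map: for the gauge action the only coadjoint-fixed level with non-empty fibre is (essentially) zero, since the equation $\ov{\partial}_E\Phi=c\,\omega\otimes\mathrm{Id}_E$ forces, after taking traces and integrating over $X$, $c\int_X\omega=0$. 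So the Crawley-Boevey--Van den Bergh trick of sliding from $\cR_0$ to $\cR_\theta$ has no direct Higgs analogue, and correspondingly your family $\mathfrak{H}\ra\AA^1$ is not defined by anything you have written down; nor is a hyperk\"ahler-rotation proof of topological triviality available off the shelf in this infinite-dimensional setting. This is precisely why Schiffmann replaces the Higgs moduli space by a perturbed finite-dimensional model: he uses a two-line-bundle variant of the \'Alvarez-C\'onsul--King functor to realise the relevant objects inside the representation space of a Kronecker quiver with $h^0\big(\cL_2^\vee\otimes\cL_1\big)$ arrows, takes the GIT quotient of the preimage of a line under the algebraic moment map on the cotangent bundle of that affine variety, and obtains a family $\cY\ra\AA^1$ with $\cY_0=\cH^{\text{ss}}_X(n,d)$; the equality $|\cY_0(\FF_q)|=|\cY_1(\FF_q)|$ is then proved algebraically by a contracting $\GG_m$-action on $\cY$ (\`a la Nakajima), not by rotating complex structures.

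Relatedly, your step (ii) conflates two different pieces of Schiffmann's work. The Hall-algebra count over the truncated category of positive sheaves, with unipotent reduction and Jordan-type partitioning, is what establishes that $\cA_{n,d}(X)$ exists as a polynomial in the Weil numbers (the previous theorem in the paper); it does not by itself produce the identity $|\cY_1(\FF_q)|=q^{1+(g-1)n^2}\cA_{n,d}(X)$, which is a separate geometric analysis of the fibres of the specific ACK-type model over indecomposable bundles (the analogue of Crawley-Boevey's lifting theorem, carried out for this perturbed model). Note also that the forgetful map from stable Higgs bundles to bundles genuinely fails to land in the indecomposable locus (e.g.\ $\cL\oplus\cL^{-1}$ with a suitable Higgs field), so some perturbation of the moduli problem is unavoidable, not an optional convenience; your outline, as written, would be comparing the wrong objects if the family were built directly out of Higgs pairs.
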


We note that for coprime $(n,d)$, the notions of absolutely indecomposable and indecomposable coincide. In fact, Schiffmann's proof of this theorem is inspired by the work of Crawley-Boevey and Van den Bergh: it involves relating $\cA_{g,n}$ to the point count of moduli spaces of semistable Higgs bundles on a curve over a finite field (provided the characteristic is sufficiently large) by fitting this moduli space into a family over $\AA^1$. Indeed the forgetful map from the stack of stable Higgs bundles to the stack of vector bundles does not land in the indecomposable locus (for example, consider Exercise~\ref{exer Higgs bundle indecom bundle not}). Therefore, one needs a slightly perturbed model of the Higgs bundle moduli space to compare with indecomposable vector bundles.

To construct such a family, Schiffmann uses a variant of the construction of the functorial construction of moduli of sheaves due to \'{A}lvarez-C\'{o}nsul and King~\cite{ack} which depends on a choice of two polarising line bundles $(\cL_1,\cL_2)$ on $X$ (rather than two twists of the same bundle); the choice of two line bundles enables a construction of a family of a algebraic symplectic reductions $\cY \ra \AA^1$ over any field~$k$ such that $\cY_0 =\cH_{X}^{\text{ss}}(n,d)$ and, moreover, the fibre~$\cY_1$ can be compared with indecomposable vector bundles. More precisely, $\cY$ is constructed as the GIT quotient of the preimage of a line under an algebraic moment map on the cotangent bundle of an affine variety. The affine variety in question arises as a closed subvariety in the representation space of a Kronecker quiver with $h^0\big(\cL_2^\vee \otimes \cL_1\big)$ arrows, where the dimension vector is determined by the class $(n,d)$ and the degrees of the pair of line bundles using the Euler form for sheaves on~$X$ (for the detailed construction, see \cite[Sections~6.3--6.8]{schiffmann}).

After constructing the family $\cY \ra \AA^1$ with $\cY_0 =\cH_{X}^{\text{ss}}(n,d)$, Schiffmann shows that over any finite field $\FF_q$ of sufficiently large characteristic the following statements hold:
\begin{enumerate}\itemsep=0pt
\item[1)] the schemes $\cY_0$ and $\cY_1$ are smooth (which uses standard properties about GIT quotients of free group actions, see \cite[Lemma 6.5]{schiffmann}),
\item[2)] the point counts of $\cY_0$ and $\cY_1$ over $\FF_q$ coincide (which is proved by using a contracting $\GG_m$-action on $\cY$, see \cite[Proposition 6.9]{schiffmann}),
\item[3)] the point count of $\cY_1$ is polynomial (this is proved by relating this to the count $\cA_{d,n}(X)$ by the following formula, for details see \cite[Lemma 6.4 and Section~6.9]{schiffmann}):
\begin{gather}\label{eq pt count Y1}
|\cY_1(\FF_q)| = q^{1+(g-1)r^2}\cA_{d,n}(X).
\end{gather}
\end{enumerate}
The moduli space of stable Higgs bundles over a finite field is already known to be cohomologically pure (for example, see \cite[Section~1.3]{HRV_semiproj}) and so \eqref{eq pt count Y1} also gives an explicit formula for the $\ell$-adic Poincar\'{e} polynomial of the moduli space of stable Higgs bundles on $X$. Finally by spreading out a smooth projective curve $X_{\QQ}$ of genus $g$ defined over $\QQ$ to some localisation $R:=\ZZ\big[\frac{1}{N}\big]$ of the integers, one can relate the $\ell$-adic Poincar\'{e} polynomial of $\cH_{X_{\QQ}}^{\text{ss}}(n,d)$ with the moduli space $\cH_{X_R \times {\FF_q}}^{\text{ss}}(n,d)$ of the base change of $X_{\QQ}$ to a finite field $\FF_q$ of large characteristic, and by the comparison theorem one can also relate this to the singular Poincar\'{e} polynomial (with $\CC$-coefficients) of the base change $X_{\CC} = X_R \times_R \CC$ (in the above, we use cohomology with compact supports). Since the diffeomorphism class of the complex variety $\cH_{X_{\CC}}^{\text{ss}}(n,d)$ is independent of the choice of smooth projective curve $X_{\CC}$ of genus $g$ (as they are all diffeomorphic to the genus $g$ character variety for $\Gl_n$), this enables Schiffmann to relate $\cA_{g,n}$ with the Poincar\'{e} polynomial of $\cH_{X_{\CC}}^{\text{ss}}(n,d)$ for any $X_{\CC}$ (see \cite[Section~6.10]{schiffmann} for further details).

\subsection{Representation theoretic interpretations of the Kac polynomials}

The Kac polynomials $\cA_{Q,d}$ and $\cA_{g,n}$ for quivers and curves have representation theoretic interpretations given by considering Hall algebras associated to quivers and curves, which is beautifully surveyed in \cite{schiffmann_Kacpoly}.

In the quiver case, Kac conjectured that the constant term of $\cA_{Q,d}$ for a quiver $Q$ without loops was the dimension of the $d$-th root space in the decomposition of the associated Kac--Moody algebra $\fg_Q$; this conjecture was proved for indivisible dimension vectors $d$ in \cite{CBVdB} and in general by Hausel~\cite{Hausel_KacConj}. Hall algebras then enter the picture as a way to construct $\fg_Q$ from the moduli stack of all quiver representations. More precisely, the objects in the Hall algebra $H_Q$ associated to the category of $\FF_q$-representations of $Q$ are functions $[\Rep/\G](\FF_q) \ra \CC$ and one can construct a so-called spherical Hall algebra as the subalgebra of $H_Q \otimes \CC\big[\ZZ^V\big]$ generated by the characteristic functions of the simple representations and the group algebra $ \CC\big[\ZZ^V\big]$. By work of Ringel and Green, this spherical Hall algebra of $Q$ is isomorphic to the positive Borel subalgebra of the Drinfeld--Jimbo quantum enveloping algebra of~$\fg_Q$. The full quantum enveloping algebra of $\fg_Q$ is given by taking the Drinfeld double of this spherical Hall algebra (see \cite[Section~2]{schiffmann_Kacpoly}). The relationship between the Kac polynomial $\cA_{Q,d}$ and moduli spaces of stable representations of the doubled quiver $\ov{Q}$ with relations $\cR_0$ imposed by the zero level set of the moment can be extended to the stack of all representations of $\big(\ov{Q},\cR_0\big)$, which can be viewed as the cotangent stack of the moduli stack of representations of $Q$. More precisely, the Poincar\'{e} polynomials of the stacks of representations of $\big(\ov{Q},\cR_0\big)$ for varying $d$ can be expressed in terms of the Kac polynomials~$\cA_{Q,d}$ for varying $d$ (see \cite[Theorem~4.2]{schiffmann_Kacpoly}); this formula combines a purity result of Davison~\cite{Ben} with a point count of Mozgovoy~\cite{mozgovoy}. Furthermore, work of Schiffmann and Vasserot~\cite{SV} equips the Borel--Moore homology of the stack of representations of~$\big(\ov{Q},\cR_0\big)$ with an associative algebra structure, known as the two-dimensional cohomologoical Hall algebra of the quiver (see \cite[Section~4.2]{schiffmann_Kacpoly}); this $2d$-cohomological Hall algebra is conjectured to be a~deformation of the universal enveloping algebra of a graded Lie algebra attached to~$Q$, whose Hilbert series coincides with the Kac polynomial.

In the curves case, the representation theoretic interpretation of $\cA_{g,n}$ involves a spherical Hall algebra, which is constructed analogously to above, but replacing the characteristic functions of simple quiver representations with constant functions on the moduli stack of rank~$n$ degree~$d$ coherent sheaves with $n \leq 1$. There is also a corresponding $2d$-cohomological Hall algebra in the curves cases associated to the stack of Higgs sheaves, which can be viewed as the cotangent stack of the moduli stack of coherent sheaves~\cite{schiffmann_sala}. The representation theoretic interpretation of these Hall algebras is ongoing work of Schiffmann and collaborators; for a nice overview of this work, see \cite[Sections~5--8]{schiffmann_Kacpoly}.

\subsection{Related open questions}

As described in Section~\ref{sec branes}, there are several interesting actions on moduli spaces of quiver representations and vector bundles on curves, as well as their hyperk\"{a}hler analogues (for example, these actions often arise from automorphisms of either the base field, the quiver or the curve). One can also count indecomposable objects in a category which respect an automorphism (or subgroup of automorphisms). A natural question is to consider the count of such absolutely indecomposable invariant objects and study their properties; for example, one could ask whether one obtain polynomial counts and whether the coefficients are non-negative. An even more ambitious question is whether there should be a cohomological interpretation of the non-negativity of the coefficients. In a more representation theoretic direction, one would hope to be able to attach some sort of a root system to such a category with an action by an automorphism group, such that the invariants of indecomposable objects respecting these automorphisms are the positive roots.

This question was partially investigated for quiver representations respecting a so-called admissible quiver automorphism by Hubery \cite{hubery_thesisarticle}, who showed that the number of isomorphism classes of absolutely indecomposable invariant representations over a finite field is a rational polynomial in the size of the field, and moreover, this is independent of the orientation and invariant under the Weyl group. Furthermore, he showed that the dimensions of the indecomposable representations are specified by the positive roots of an associated symmetrisable Kac--Moody Lie algebra. It would be interesting to investigate whether the coefficients could be described in terms of the cohomology of an associated brane in the corresponding hyperk\"{a}hler quiver variety given by taking the fixed locus of this automorphism group.

\subsection*{Acknowledgements}

This article is based on lecture notes for the fifth workshop on the Geometry and Physics of Higgs bundles and the author would like to thank Laura Schaposnik for the organisation of this workshop. The author would also like to thank the participants of a seminar on counting indecomposable quiver representations held at the Freie Universit\"{a}t Berlin for interesting discussions related to this topic. The author is supported by the Excellence Initiative of the DFG at the Freie Universit\"{a}t Berlin.

\LastPageEnding

\end{document}